\newtheorem{theorem}{Theorem}[section]
\newtheorem{proposition}[theorem]{Proposition}
\newtheorem{corollary}[theorem]{Corollary}
\newtheorem{lemma}[theorem]{Lemma}
\newtheorem{conjecture}[theorem]{Conjecture}
\theoremstyle{definition}
\newtheorem{definition}[theorem]{Definition}
\theoremstyle{remark}
\numberwithin{equation}{section}
\newcommand{\cut}{\operatorname{cut}}
\newcommand{\diag}{\operatorname{diag}}
\newcommand{\Sym}{\operatorname{Sym}}
\newcommand{\Pot}{\operatorname{Pot}}
\newcommand{\prim}{\operatorname{prim}}
\newcommand{\Tot}{\operatorname{Tot}}
\newcommand{\un}{\operatorname{un}}
\newcommand{\val}{\operatorname{val}}
\newcommand{\al}{\alpha}
\newcommand{\be}{\beta}
\newcommand{\ph}{\varphi}
\newcommand{\AAA}{\mathbb{A}}
\newcommand{\CC}{\mathbb{C}}
\newcommand{\dd}{\partial}
\newcommand{\ii}{\textbf{i}}
\newcommand{\GG}{\mathcal{G}}
\newcommand{\kk}{\textbf{k}}
\newcommand{\PP}{\mathbb{P}}
\newcommand{\RR}{\mathbb{R}}
\newcommand{\SSS}{\mathbb{S}}
\newcommand{\TT}{\mathcal{T}}
\newcommand{\QQ}{\mathbb{Q}}
\newcommand{\ZZ}{\mathbb{Z}}
\newcommand*\circled[1]{\tikz[baseline=(char.base)]{
\node[shape=circle,draw,inner sep=1](char) {#1};}}
\begin{document}

\title{Potentials for moduli spaces of $A_m$-local systems on surfaces}

\author{Efim Abrikosov}
\address{}
\curraddr{Yale University, Department of Mathematics, 10 Hillhouse Ave, 4th Floor}
\email{efim.abrikosov@yale.edu}

\keywords{Representation Theory, Cluster Varieties, Calabi-Yau categories, Quivers with Potential}

\begin{abstract}
We study properties of potentials on quivers $Q_{\mathcal{T},m}$ arising from cluster coordinates on moduli spaces of $PGL_{m+1}$-local systems on a topological surface with punctures. To every quiver with potential one can associate a $3d$ Calabi-Yau $A_\infty$-category in such a way that a natural notion of equivalence for quivers with potentials (called ``right-equivalence'') translates to $A_\infty$-equivalence of associated categories \cite[Section 8]{KS08}.

For any quiver one can define a notion of a ``primitive'' potential. Our first result is the description of the space of equivalence classes of primitive potentials on quivers $Q_{\mathcal{T}, m}$. Then we provide a full description of the space of equivalence classes of all \emph{generic} potentials for the case $ m = 2$ (corresponds to $PGL_3$-local systems). In particular, we show that it is finite-dimensional. This claim extends results of Gei\ss, Labardini-Fragoso and Schr\"oer (\cite{L08}, \cite{GLS13}) who have proved analogous statement in $m=1$~case.

In many cases $3d$ Calabi-Yau $A_\infty$-categories constructed from quivers with potentials are expected to be realized geometrically as Fukaya categories of certain Calabi-Yau $3$-folds. Bridgeland and Smith gave an explicit construction of Fukaya categories for quivers $Q_{\TT,1}$, see \cite{BS13}, \cite{S13}. We propose a candidate for Calabi-Yau $3$-folds that would play analogous role in higher rank cases, $m > 1$. We study their (co)homology and describe a construction of collections of $3$-dimensional spheres that should play a role of generating collections of Lagrangian spheres in corresponding Fukaya categories.
\end{abstract}

\maketitle

\section{Introduction.}

\subsection{Motivation.}
Informally, a quiver with potential is a finite oriented graph~$Q$ equipped with a possibly infinite formal linear combination of its oriented cycles~$W$. In mathematical literature this object was studied by V. Ginzburg in~\cite{G06} and from a slightly different point of view by Derksen, Weyman and Zelevinsky in~\cite{DWZ07}.

Ginzburg associates to every quiver with potential~$(Q, W)$ a dg-algebra~$\Gamma(Q,W)$ and shows that the category of its finite-dimensional modules has $3d$~Calabi-Yau property (\cite[Definition 3.2.3]{G06}). On the other hand, a well-known source of $3d$ Calabi-Yau $A_\infty$-categories are Fukaya categories of symplectic manifolds. It is an interesting question to relate these two origins of the same categorical structure in specific contexts.

Some potentials on a quiver give rise to equivalent $3d$~Calabi-Yau categories. From that perspective, it is important to understand spaces of potentials modulo some equivalence relation. Corresponding notion of equivalence for different potentials was introduced in \cite{DWZ07} under the name ``right-equivalence''.

In the present work, we deal with quivers describing cluster structure on moduli spaces of $A_m$-local systems on a topological surface with punctures \cite{FG03}\footnote{More precisely, type $A_m$ corresponds to twisted $SL_{m+1}$-local systems for the case of cluster $\mathcal{A}$-varieties and to framed $PGL_{m+1}$-local systems for cluster $\mathcal{X}$-varieties. In this paper, we omit details of this construction and focus on properties of associated quivers.}. Our main goal is to study spaces of potentials up to right-equivalences in this setting.

An important example of potentials for these quivers was given by Goncharov in \cite{G16}. The key property of his construction is that potentials are preserved by a class of combinatorial transformations called ``mutations''. By the general result of Keller and Yang \cite{KY09} this implies, in particular, that there exists a $3d$~Calabi-Yau category assigned to the whole cluster variety, not only a fixed quiver.

\medskip
The first algebraic result concerning right-equivalence classes of potentials on quivers $Q_{\TT, m}$ was given by Labardini-Fragoso \cite{L08} and subsequently by Gei\ss, Labardini-Fragoso and Schr\"oer \cite{GLS13} in the case $m=1$. In particular, they show that the space of equivalence classes of generic potentials on $Q_{\TT, 1}$ is finite-dimensional.

In the present paper we prove that the finite-dimensionality holds for the case of $A_2$-local systems. Hence, there is a finite-dimensional family of $3d$ Calabi-Yau categories associated to any such quiver via Ginzburg's construction. It is conceivable that analogous fact is true for arbitrary rank as well.

\medskip
Geometric side of the problem is to find appropriate symplectic manifolds and realize families of categories of the algebraic origin as Fukaya categories. In $A_1$-case it was done by Bridgeland and Smith, \cite{BS13} and Smith, \cite{S13}. Their manifolds are close relatives of those from the work of Diaconescu, Donagi and Pantev \cite{DDP06}, where symplectic manifolds are constructed from a \emph{holomorphic} quadratic differential on a complex curve. Bridgeland and Smith work with \emph{meromorphic} quadratic differentials which makes certain aspects of their approach quite different. We summarize details of their construction in Section~\ref{CYfoldsm1}. One of the key ideas in \cite{S13} is that finite-dimensionality result of \cite{GLS13} allows to identify a full subcategory of a Fukaya category from only finitely many multiplication coefficients. In Smith's paper these coefficients are described explicitly from the geometry of corresponding symplectic manifolds.

In this paper we propose a generalization of Smith's manifolds to higher rank cases~$A_m$, $m \geq 1$. To construct our open $3d$~Calabi-Yau manifolds we use points of Hitchin base $\mathcal{B}_{S, D, m}\simeq \bigoplus_{k = 2}^{m+1} H^0(S, K_S(D)^{\otimes k})$, where $S$ is an algebraic curve, $D$ is the divisor of marked points (e.g. $m = 1$ corresponds to the case studied by Bridgeland and Smith)\footnote{Note that one may also vary the complex structure on the underlying topological surface, as well as the divisor of marked points $D$, in that case the input becomes purely topological.}. Given a point $\Phi\in\mathcal{B}_{S, D, m}$ we construct an open $3$-dimensional Calabi-Yau manifold $Y_\Phi$ which agrees with \cite{S13} in the case $m = 1$. Following the logic of Smith's paper, our finite-dimensionality result should be sufficient to identify full subcategories of Fukaya categories of $Y_\Phi$ up to $A_\infty$-equivalence. Let us note that a closely related construction of open $3d$ Calabi-Yau manifolds was given in \cite[Section 8]{KS13}.

\smallskip
Therefore to fully understand the interplay between algebra and geometry, it is important to match parameters defining Fukaya categories of open $3d$ Calabi-Yau manifolds (e.g. second cohomology groups) and equivalence classes of potentials for the corresponding quivers. We provide a computation of $H^2(Y_\Phi, \QQ)$ and $H_3(Y_\Phi, \QQ)$ to support the claim that such a matching exists (Proposition \ref{topology}).

\subsection{Spaces of potentials modulo right-equivalences $\Pot(\mathcal T, m)$.} To state our main results about spaces of potentials for quivers of the interest $Q_{\mathcal{T}, m}$ we begin with a brief discussion of some general definitions. Precise description of quivers is postponed till Section \ref{clusterstr}; quivers with potentials are recalled in Section~\ref{QPsec}.

A quiver $Q$ is described by finite sets of vertices and arrows with their incidence relations. Let $\textbf{k}$ be a field of characteristic zero. A \emph{path algebra} $R\langle Q\rangle$ of the quiver over $\mathbf{k}$ is spanned by formal products of composable arrows with multiplication given by concatenation. In particular, there is a path of length zero associated to every vertex of $Q$ and it is idempotent in $R\langle Q\rangle$.

Path algebra possesses a natural completion with respect to path length denoted $R\langle\langle Q\rangle\rangle$. Its elements can be viewed as possibly infinite linear combinations of paths, such that there are only finitely many paths of any given degree. A \emph{potential} is an element of the completed path algebra $R\langle\langle Q\rangle\rangle$ that consists of cyclic paths. Every summand of the potential is considered up to cyclic shifts (see Definition \ref{potdef}).

Any automorphism of $R\langle\langle Q\rangle\rangle$ preserving vertices of the quiver induces a map on potentials. Two potentials on a quiver are called \emph{right-equivalent} if there is an automorphism of the completed path algebra that maps one potential to another.


\begin{figure}
	\centering
	\includegraphics{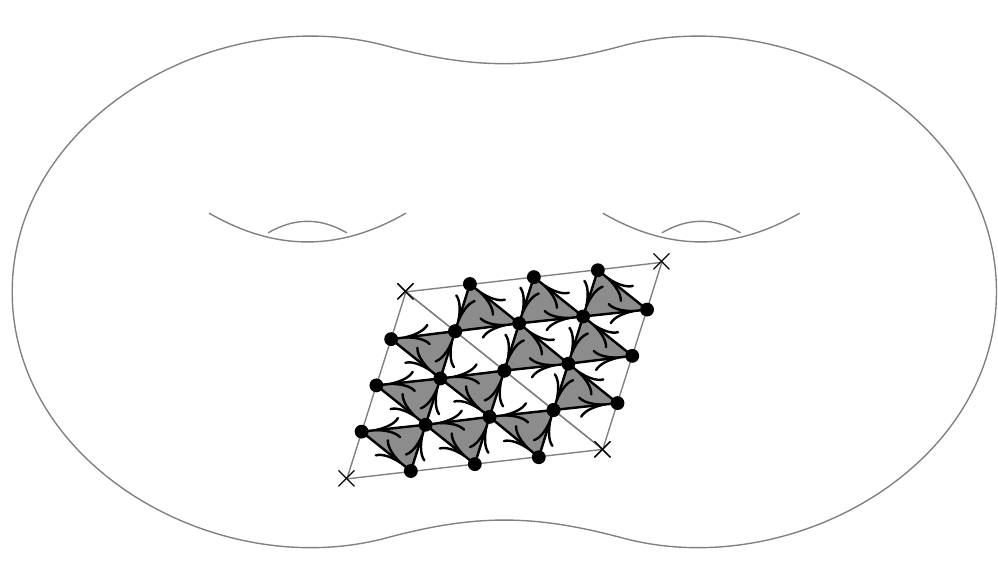}
	\captionof{figure}{Part of quiver $Q_{\TT, 3}$}
	\label{Surface_Quiver}
\end{figure}

We deal with quivers that arise from triangulations of compact oriented topological surfaces $\SSS$ of genus $g$ with $d$ marked points. We require that vertices of triangulations coincide with the set of markings on $\SSS$, and that endpoints of every edge are different. Necessary and sufficient conditions for existence of such triangulations is $d\geq3$ for $g>0$, or $d\geq4$ for $g=0$.

For every triangulation $\TT$ satisfying these requirements and any integer $m\geq 1$  there is a quiver $Q_{\TT, m}$. These quivers were originally discovered by Fock and Goncharov who studied cluster structure on moduli spaces of local systems on topological surfaces \cite{FG03}. Quiver $Q_{\TT,m}$ is obtained by placing a subquiver shown in Figure~\ref{triangle_quiver} in every triangle of the triangulation and identifying $m$ vertices along the shared edge of any two adjacent triangles. A part of such quiver for $m=3$ can be found in Figure~\ref{Surface_Quiver}. Little crosses denote marked points on the surface and gray lines in the interior represent arcs of the triangulation.

For the case $m=1$, we place one vertex at the midpoint of every edge of $\TT$; arrows join three midpoints of sides of every triangle following counterclockwise direction  (according to the orientation of $\SSS$). These are precisely quivers considered by Labardini-Fragoso in~\cite{L08}. For $m>1$ there are also vertices sitting in the interior of triangles.

\smallskip
Our first result uses a concept of a \emph{primitive} potential, it can be defined for an arbitrary quiver (see Section \ref{primdefsec}, Definition~\ref{primdef}). Namely, a potential on a given quiver is primitive if it is a linear combination of cycles from an explicitly defined collection of cycles~$\mathfrak{C}$, where \emph{all} coefficients are different from zero. Therefore, the space of all potentials admits a natural projection onto the span of elements in $\mathfrak{C}$. Throughout the paper, we say that a potential is \emph{generic} if all coefficients of this projection are nonzero (that is, the projection is a primitive potential itself). We stress that usually an arbitrary right-equivalence does not preserve the subspace of primitive potentials.

Let us describe explicitly the collection $\mathfrak{C}$ for quivers $Q_{\TT,m}$. It consists of three types of cycles (see example in Fig.~\ref{primtypes} for a patch around a vertex of the triangulation):
\begin{enumerate}[label=(\roman*)]
\item Boundary cycles of ``black'' regions (oriented counterclockwise);
\item Boundary cycles of ``white'' regions (oriented clockwise);
\item Cycles $L^{(k)}_p$, where $p$ is a puncture of $\SSS$ and $k = 1,2,\ldots m$ (oriented clockwise).
\end{enumerate}
The complement of the quiver embedded in~$\SSS$ is the union of disks. Direction of arrows of the quiver equips boundaries of these disks with orientation (shown in gray and white). Thus, first and second types of cycles of $\mathfrak{C}$ are defined as products of arrows along boundaries of disks with clockwise/counterclockwise oriented boundaries

The third type consists of cycles going around marked points on~$\SSS$ (right panel of Fig. \ref{primtypes}). Given a marked point $p\in\SSS$ and a triangle adjacent to it, for each $1\leq k\leq m$ there is a path of length $k$ parallel to the opposite side of $p$ in the triangle. Composing such paths across all triangles adjacent to $p$ we get a closed cycle of the third type. There are $m$ such cycles around every marked point, and the innermost cycle also belongs to the type associated to disks with clockwise boundary orientation.

Thus, a potential on $Q_{\TT,m}$ is primitive if it is a linear combination of all elements of $\mathfrak{C}$ with nonzero coefficients.

We are ready to state our first result.
\begin{theorem}\label{mn1}
  \begin{enumerate}[label=(\alph*)]
    \item The quotient of the space of primitive potentials for $Q_{\TT, m}$ by the action of the group of right-equivalences preserving this space is isomorphic to $\left(\emph{\kk}^{\times}\right)^{(m-1)d+1}$.
    \item Let $\pi_{\prim}$ denote the projection of the space of generic potentials onto the space of primitive potentials. For any right-equivalence $\varphi$ and for any generic potential $W$ the following holds:
      $$
      \pi_{\prim} \left(\varphi (W)\right) = \pi_{\prim} \left(\varphi(\pi_{\prim}(W))\right),
      $$
  \end{enumerate}
\end{theorem}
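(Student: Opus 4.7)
The plan is to exploit the canonical decomposition of right-equivalences. Any automorphism $\varphi$ of $R\langle\langle Q_{\TT, m}\rangle\rangle$ fixing the vertex idempotents factors uniquely as $\varphi = \varphi^{(\geq 2)} \circ \varphi^{(1)}$, where $\varphi^{(1)}$ sends each arrow $a$ to a linear combination of arrows with the same source and target as $a$, and $\varphi^{(\geq 2)}$ is unitriangular in the sense that $\varphi^{(\geq 2)}(a) - a$ is a formal sum of paths of length $\geq 2$. Since $Q_{\TT, m}$ has no multiple arrows between any two vertices, $\varphi^{(1)}$ must be a diagonal rescaling $a \mapsto \lambda_a a$.

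For part (a), I would first show that the subgroup of right-equivalences preserving $\mathcal{P} := \mathrm{span}(\mathfrak{C})$ consists exactly of such diagonal rescalings: $\varphi^{(1)}$ evidently preserves $\mathcal{P}$, while any nontrivial $\varphi^{(\geq 2)}$ produces length-increased cyclic terms outside $\mathfrak{C}$ when applied to a sufficiently short element of $\mathfrak{C}$ (e.g.\ a small triangle), by direct inspection of the local arrow pattern of Figure~\ref{triangle_quiver}. The residual action of the arrow torus on $\mathcal{P}^{\times}$ is then the multiplicative map $(\lambda_a) \mapsto (\prod_{a \in c}\lambda_a)_{c \in \mathfrak{C}}$. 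Taking logarithms turns this into a $\textbf{k}$-linear map, and the cokernel dimension $(m-1)d+1$ would be verified by exhibiting that many explicit independent invariants, while counting both $|\mathfrak{C}|$ and the number of arrows via the Euler relation $T = 2d + 4g - 4$ for the triangulation. Natural invariants are, at each puncture $p$, the $m-1$ ratios mixing the $L_p^{(k)}$-coefficients with those of the adjacent face-boundary cycles, plus one global multiplicative invariant reflecting the topology of $\SSS$.

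For part (b), the linearity of both $\varphi$ and $\pi_{\prim}$ on the space of potentials makes the asserted identity equivalent to $\pi_{\prim}(\varphi(U)) = 0$ for every $U \in \ker \pi_{\prim}$; i.e.\ every right-equivalence preserves $\ker \pi_{\prim}$. The rescaling part $\varphi^{(1)}$ obviously preserves $\ker \pi_{\prim}$, so the content lies in the unitriangular part: expanding $\varphi^{(\geq 2)}(c) = c + (\text{longer cyclic paths})$ for $c \notin \mathfrak{C}$, one must verify that no $\mathfrak{C}$-cycle appears in the expansion. This rests on a rigidity property of $\mathfrak{C}$: each element is determined uniquely by its combinatorial role, as either the boundary of a face of $Q_{\TT, m} \hookrightarrow \SSS$ or a canonical concentric cycle $L_p^{(k)}$ around a puncture, and a cyclic path obtained by substituting some arrows of $c$ with longer parallel paths does not acquire this role. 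A length-induction then closes the argument.

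The main obstacle is establishing this rigidity of $\mathfrak{C}$, especially giving a uniform treatment of face-boundary cycles alongside the non-boundary concentric cycles $L_p^{(k)}$ with $k \geq 2$, and ruling out that several length-increased corrections could conspire to reconstruct an element of $\mathfrak{C}$. I expect the cleanest packaging will be a filtration of the potential space by a combinatorial complexity (combining length with the homotopy class on $\SSS$ minus punctures) which is weakly preserved by $\varphi$ and in which $\mathfrak{C}$ forms precisely the minimal-complexity stratum, so that $\varphi^{(\geq 2)}$ cannot decrease complexity into $\mathfrak{C}$ from any non-$\mathfrak{C}$ cycle.
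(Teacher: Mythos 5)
Your overall architecture — peel off the diagonal/rescaling part of a right-equivalence, reduce the question about primitive potentials to a torus action, then compute a cokernel — matches the paper's. But both halves contain gaps that the paper resolves with a single technical device you do not invoke: the cutting lemma (Lemma~\ref{cutlemma}).

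For part (b), you correctly reduce the claim to showing that the unitriangular part $\varphi^{(\geq 2)}$ kills $\ker\pi_{\prim}$, and you correctly see that this means no chordless cycle may appear in $\varphi^{(\geq 2)}(c)$ for $c\notin\mathfrak{C}$. But then you try to argue this by analyzing the \emph{targets} — explaining why elements of $\mathfrak{C}$ have a ``combinatorial role'' that cannot be achieved by substitution — and you frankly admit this ``rigidity'' is the main obstacle, proposing only a vague filtration as a repair. The direction of the argument is actually much easier if you look at the \emph{source}: if $c$ has a chord $(\beta, i, j)$, and you replace each arrow $\alpha_k$ of $c$ by a path $P_k$ with $s(P_k)=s(\alpha_k)$, $t(P_k)=t(\alpha_k)$, then the endpoints $t(\alpha_i)$, $s(\alpha_j)$ are still vertices of the new cycle at distinct positions (they separate different consecutive pairs $P_{i-1},P_i$ and $P_j, P_{j+1}$), so $\beta$ is still a chord of $P_1\cdots P_n$. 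This is precisely the content extracted by the paper from Lemma~\ref{cutlemma}: for a chordless $C$, the only term in the chord-cutting expansion of $\varphi(W)[C]$ is the empty collection of chords, giving $\varphi(W)[C]=W[C]$ for unitriangular $\varphi$, hence $\varphi(W)[C]=\bigl(\prod_{\al\in C}\lambda_\al\bigr)W[C]$ in general, which immediately implies (b). Until you prove that persistence statement (or Lemma~\ref{cutlemma}), part (b) is incomplete.

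For part (a), you overclaim: you assert the stabilizer of $\operatorname{span}(\mathfrak{C})$ is \emph{exactly} the diagonal torus, which is both stronger than needed and not justified (you would need to rule out cancellations among the higher-order terms, and the paper explicitly warns that ``there may exist non-trivial right-equivalences inducing trivial action on the span of chordless cycles''). The paper sidesteps this: it suffices that the induced action on $\mathcal{P}^{\times}$ factors through $\GG^{\diag}$ and that $\GG^{\diag}$ is itself contained in the preserving subgroup, so the quotient is that of the $\GG^{\diag}$-action regardless of what else stabilizes $\mathcal{P}$. Finally, your dimension count is only sketched: you name candidate invariants but do not verify independence or that there are exactly $(m-1)d+1$ of them. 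The paper replaces this by identifying the quotient with $H^2(\mathcal{C}(\TT,m),\kk^\times)$, where $\mathcal{C}(\TT,m)$ is the 2-complex with 1-skeleton $Q_{\TT,m}$ and a 2-cell glued along each chordless cycle; removing the cells attached to $L_p^{(k)}$ ($k\geq 2$) leaves a complex homotopy equivalent to $\SSS$, and each of the $d(m-1)$ cells glued back along a null-homotopic loop adds one rank to $H^2$, giving $d(m-1)+1$ cleanly.
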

\begin{corollary}
  If two generic potentials are right-equivalent then their projections onto the space of primitive potentials are also right-equivalent.

  In other words, the equivalence class of the primitive part of a generic potential described in part (a) of Theorem \ref{mn1} is an invariant of the action of right-equivalences.

\end{corollary}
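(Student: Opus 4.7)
The plan is to chase part (b) of Theorem \ref{mn1} through the identity $\varphi(W_1)=W_2$ and then eliminate the resulting non-primitive remainder via a second right-equivalence.

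First I would let $\varphi$ be a right-equivalence with $\varphi(W_1)=W_2$, and set $P_i:=\pi_{\prim}(W_i)$ for $i=1,2$. Applying $\pi_{\prim}$ to both sides and invoking Theorem \ref{mn1}(b) gives
$$
P_2 \;=\; \pi_{\prim}(\varphi(W_1)) \;=\; \pi_{\prim}\bigl(\varphi(P_1)\bigr),
$$
so $\varphi(P_1)=P_2+R$ with $R$ in the kernel of $\pi_{\prim}$, a sum of non-primitive cycles. In particular, the image under $\varphi$ of the primitive potential $P_1$ already has $P_2$ as its primitive part; only the non-primitive tail $R$ stands between $\varphi(P_1)$ and $P_2$.

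Next I would construct an auxiliary right-equivalence $\psi$ satisfying $\psi(P_2+R)=P_2$, whereupon $\psi\circ\varphi$ would send $P_1$ directly to $P_2$ and so establish the first assertion of the corollary. The construction of $\psi$ proceeds degree-by-degree in the path-length filtration on $R\langle\langle Q_{\TT,m}\rangle\rangle$: at each stage one picks a unipotent change of arrows that cancels the lowest-surviving component of the remainder while producing corrections only in strictly higher degrees, and completeness of the filtration guarantees convergence of the infinite composition.

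For the reformulation, one would further check that $\psi$ may be arranged to preserve the subspace of primitive potentials, so that the restriction of $\psi\circ\varphi$ to $P_1$ represents an element of the group acting in Theorem \ref{mn1}(a). Then $[P_1]=[P_2]$ in the quotient $\bigl(\kk^{\times}\bigr)^{(m-1)d+1}$, confirming that this class is a genuine invariant of the action of right-equivalences on generic potentials.

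The main obstacle is the cancellation step: one must verify that arbitrary non-primitive tails $R$ can be absorbed without reintroducing contributions to $\mathfrak{C}$. This hinges on the explicit classification of cycle types in $Q_{\TT,m}$ (boundary cycles of black/white regions and the puncture cycles $L^{(k)}_p$) and on the same inductive machinery that presumably underlies part (b) of Theorem \ref{mn1}, so I expect it to be a short extension of that proof rather than a substantially new ingredient.
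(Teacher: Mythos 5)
Your opening move is correct: applying $\pi_{\prim}$ and Theorem \ref{mn1}(b) to $\varphi(W_1)=W_2$ does give $P_2=\pi_{\prim}(\varphi(P_1))$, so $\varphi(P_1)=P_2+R$ with $R$ non-primitive. But from here the paper's observation really is immediate, and your route around it introduces a genuine gap.

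The decisive fact — spelled out just after Lemma \ref{cutlemma} and again at the start of Section \ref{primdefsec} — is that for \emph{any} right-equivalence $\varphi$, the coefficient of a chordless cycle in $\varphi(U)$ is determined by the diagonal part $\varphi^{\diag}$ alone. Hence $\pi_{\prim}(\varphi(P_1))=\varphi^{\diag}(P_1)$, so $P_2=\varphi^{\diag}(P_1)$ outright. Since $\varphi^{\diag}$ is a diagonal right-equivalence, it manifestly preserves $R\langle\langle Q\rangle\rangle_{\prim}$, so $P_1$ and $P_2$ lie in the same orbit of the group featuring in part (a). That is the whole proof — no auxiliary $\psi$ is needed.

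Your proposed construction of $\psi$ with $\psi(P_2+R)=P_2$ is both harder and not fully justified. The claim that one can always cancel a non-primitive remainder degree-by-degree by elementary unitriangular moves is essentially the content of the heavy Section 5 machinery (Propositions \ref{straight}, \ref{onlyL}, \ref{removeL}, \ref{nononloc}), which is proved only for $m=2$ and only under the \emph{strongly} generic hypothesis. It is not a "short extension of Theorem \ref{mn1}(b)." Indeed, it is false in general that a generic potential is right-equivalent to a primitive one — Theorem \ref{mn3} shows the fibers over the primitive classes are an affine line, not a point, so a non-primitive tail cannot always be absorbed. Here the desired $\psi$ does happen to exist (one can take $\psi=\varphi^{\diag}\circ\varphi^{-1}$), but precisely because of the observation above, which your argument does not use. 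Finally, even granting $\psi$, the last step fails as stated: you need $\psi\circ\varphi$, not $\psi$, to represent an element of the group in part (a), and $\varphi$ itself need not preserve the primitive subspace; to repair this you would again need to pass to the diagonal part, which brings you back to the one-line argument.
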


\proof This is immediate from part (b) of Theorem \ref{mn1}.

\smallskip

\begin{figure}
	\centering
	\begin{minipage}[b]{.33\textwidth}
		\centering
		\includegraphics[width=.9\textwidth]{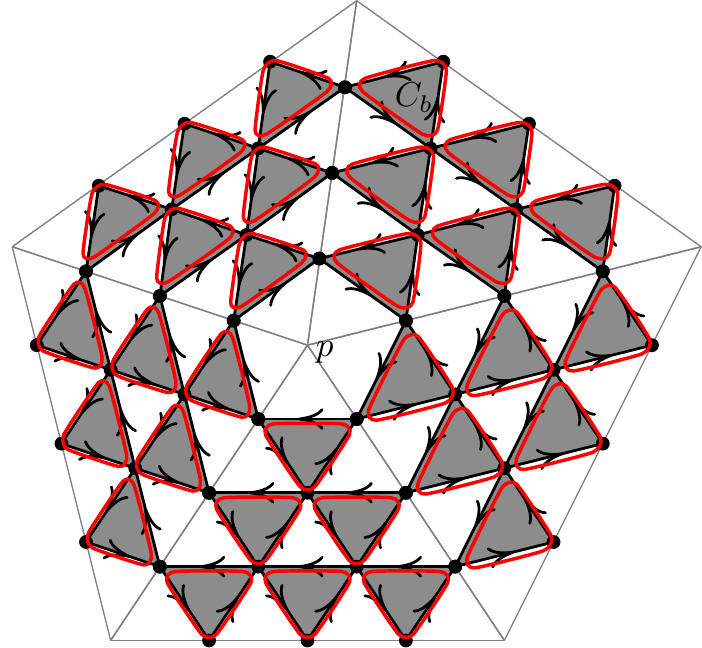}
	\end{minipage}%
	\begin{minipage}[b]{.33\textwidth}
		\centering
		\includegraphics[width=.9\textwidth]{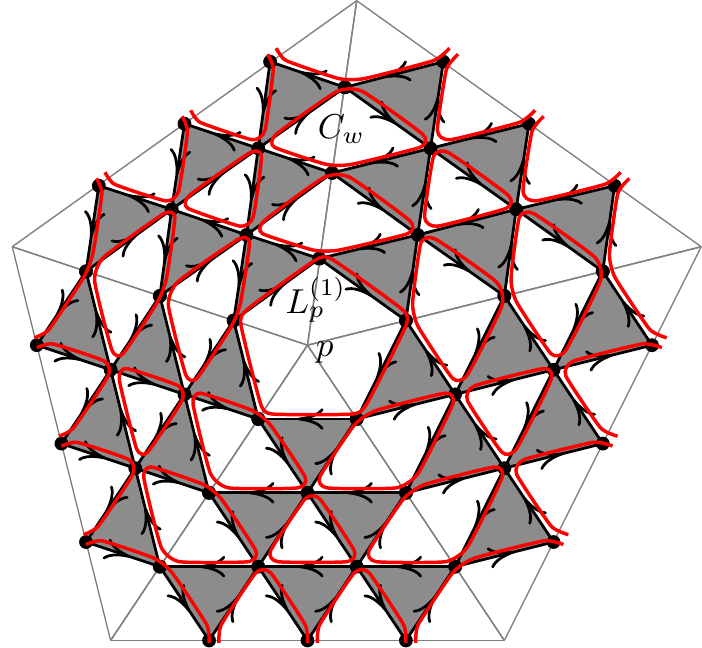}
	\end{minipage}%
	\begin{minipage}[b]{.33\textwidth}
		\centering
		\includegraphics[width=.9\textwidth]{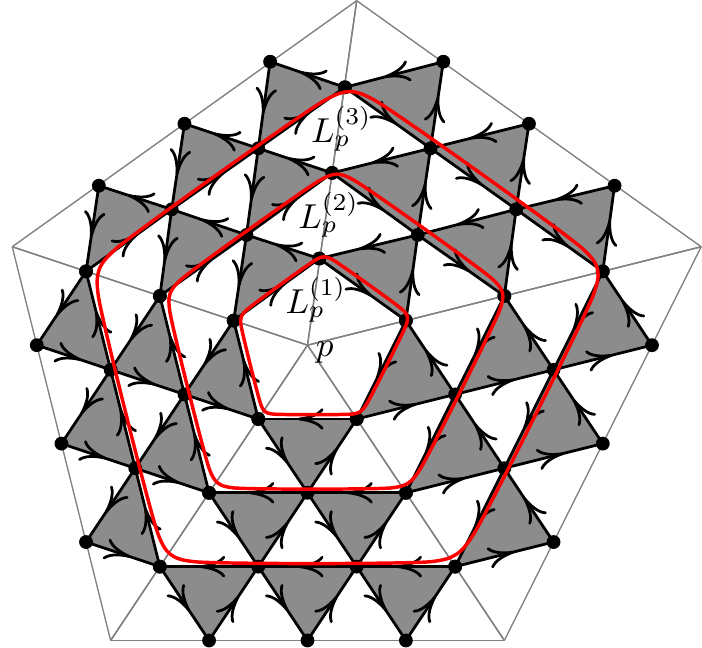}
	\end{minipage}
	\captionof{figure}{Three types of primitive cycles}
	\label{primtypes}
\end{figure}

\medskip
In our second result about spaces of potentials of the quivers $Q_{\TT, m}$ we specialize $m=2$. The notion of a \emph{strongly generic} potential used in the statement of the theorem is given in Definition~\ref{strgendef}. It can be expressed as non-vanishing of a system of linear equations on coefficients of cycles from $\mathfrak{C}$ (see (\ref{strgen})).

We expect that finite-dimensionality should hold for arbitrary $m\geq 1$.

\begin{theorem} The quotient of the space of strongly generic potentials on $Q_{\TT,2}$ modulo the action of the group of right-equivalences has dimension $d+2$.

Its natural projection to the space of equivalence classes of primitive potentials described in part (b) of Theorem \ref{mn1} is a fibration over the image with fibers isomorphic to an affine line $\mathbb{A}_{\emph{\kk}}^1$.
\end{theorem}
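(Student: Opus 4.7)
The plan is to exhibit the quotient as the total space of an $\mathbb{A}^1$-fibration over the base described in Theorem~\ref{mn1}(a). Part (b) of that theorem already guarantees that the projection $\pi_{\prim}$ descends to a well-defined map on right-equivalence classes of generic potentials. For $m=2$ the base has dimension $(m-1)d+1=d+1$, so the total dimension $d+2$ will follow once each fiber is shown to be isomorphic to $\mathbb{A}^1_{\kk}$.

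To analyze a fixed fiber, I would first reduce the primitive part $\pi_{\prim}(W)$ of a strongly generic potential $W$ to the canonical normal form provided by Theorem~\ref{mn1}(a), and then examine the action of the stabilizer $G_0$ of this normal form on the remainder $W' = W - \pi_{\prim}(W)$, which is a formal sum of non-primitive cycles. The standard technique is an order-by-order arrow substitution: write $\varphi \in G_0$ as $\varphi(a) = a + \rho_a$ with $\rho_a$ of path length $\geq 2$, expand $\varphi(W)$, and collect cycles by length. Each successive layer of $W'$ yields a linear system for the coefficients of $\rho_a$, whose ``denominators'' are precisely the linear combinations of primitive coefficients referenced in~(\ref{strgen}); the strongly generic hypothesis therefore guarantees solvability at every step and reduces $W'$ to a finite normal form.

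The remaining task, and the main obstacle, is a careful classification of non-primitive cycles in $Q_{\TT,2}$ together with a demonstration that the reduction above collapses their equivalence classes onto a single affine parameter. Locally, on each triangle (subquiver of Figure~\ref{triangle_quiver}) and around each puncture, one would enumerate the finitely many basic non-primitive cycle shapes that cannot be factored as products of elements of $\mathfrak{C}$, and check that each one --- together with the infinite family of cycles obtained from it by concatenation with primitives --- is killed by an appropriate choice of $\rho_a$, except for one distinguished residual class whose coefficient serves as the fiber coordinate. The delicate point is ensuring that after all local reductions are performed, no further global relations appear to cut the one-parameter family down to a point; this amounts to verifying that the linearization of the reduction system has exactly corank one, the precise feature that distinguishes $m=2$ from the $m=1$ case of Gei\ss--Labardini-Fragoso--Schr\"oer in \cite{GLS13} where the fiber is trivial.
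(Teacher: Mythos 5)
Your proposal captures the broad strategy the paper follows: fix the primitive normal form, then perform order-by-order unitriangular substitutions $\varphi(a)=a+\rho_a$ to collapse the non-primitive part onto finitely many coefficients, and finally argue that a single affine parameter survives. But two substantive gaps remain.

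First, the organizing criterion you propose for the residual cycles --- those ``that cannot be factored as products of elements of $\mathfrak{C}$'' --- does not match what the reduction actually leaves. The paper's reduced collection (Tables~\ref{table1}--\ref{table3}) contains, for instance, $\left(L_p^{(2)}\right)^2$, which \emph{is} a product of chordless cycles; the correct organizing notions are straightness, locality to a $p$-patch, and divisibility by edge cycles (Lemma~\ref{divE}), encoded via the bijections $l,r,f$ on arrows rather than by factorization in $\mathfrak{C}$. Relatedly, you assert that the strongly generic condition~(\ref{strgen}) furnishes a ``denominator'' at every stage; in fact it is invoked exactly once, in Lemma~\ref{removeL}, to guarantee that the aggregate coefficient $v_p^{(1)}+(-1)^{\val(p)}v_p^{(2)}$ multiplying $(L_p^{(2)})^n$ is nonzero. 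All other steps of the reduction (Propositions~\ref{straight},~\ref{nononloc}, Lemma~\ref{onlyL}) hold for arbitrary generic potentials.

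Second, and more seriously, the lower bound on the fiber dimension is not established by your argument. Saying ``the linearization of the reduction system has exactly corank one'' restates the goal rather than proving it. Once Proposition~\ref{onecoeff} reduces any strongly generic potential to $W_{\prim}+vC$, one must still show that distinct values of $v$ are never identified by \emph{some other} right-equivalence; this does not follow from the reduction procedure and needs an independent invariant. The paper produces one explicitly: a linear functional $\Theta$ on reduced potentials with coefficients $\theta_C$ built from ratios of the primitive coefficients (e.g.~$\theta_e$ in~(\ref{thetae})), shown to be $\bar{\GG}^{\operatorname{un}}$-invariant by a finite check (Proposition~\ref{Theta}), and $\GG^{\diag}$-invariant via a cohomological argument on the CW complex $\mathcal{C}(\TT,m)$ (Lemma~\ref{diageq}). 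You correctly flag this as the delicate point, but your proposal supplies no mechanism for it, so the step from ``at most $\mathbb{A}^1$'' to ``exactly $\mathbb{A}^1$'' is left open.
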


\subsection{Open 3d Calabi-Yau manifolds from points of Hitchin base} We introduce and study a class of symplectic Calabi-Yau manifolds that generalize the approach of Smith \cite{S13} to the case of local systems of arbitrary rank. These ideas are closely related to the construction from \cite{KS13}. We conjecture that Fukaya categories of these manifolds contain full subcategories that are $A_\infty$-equivalent to categories of finite-dimensional modules over Ginzburg algebra for quivers with potentials described in previous subsection.

Fix a complex curve $S$ with a divisor $D$ of marked points, and let $m\geq 1$ be the integer equal to the rank of local systems on $S$ minus one. The input of our construction is a generic point $\Phi = (\varphi_2, \varphi_3,\ldots \varphi_{m+1})$ of Hitchin base
$$
\mathcal{B}_{S,D,m} \equiv \bigoplus_{k = 2}^{m+1}H^0\left(S, K_S(D)^{\otimes k}\right),
$$
where $K_S(D)$ is a twisted canonical class of $S$. The quickest way to describe the open Calabi-Yau $3$-fold $Y_\Phi$ associated to the point of Hitchin base is by an equation in the total space of rank three vector bundle $\mathcal W\rightarrow S$. For clarity of exposition we fix two line bundles $\mathcal L_1, \mathcal L_2$ such that $\mathcal L_1\otimes \mathcal L_2 \simeq K_S(D)$. Then $\mathcal W$ is defined as the direct sum:
\begin{equation}\label{Wdef}
\mathcal{W} \overset{\operatorname{def}}=\mathcal L_1^{\otimes (m+1)}(-D)\oplus\mathcal L_1\mathcal L_2\oplus\mathcal L_2^{\otimes (m+1)}
\end{equation}
Given a point $\Phi$ of Hitchin base, $Y_\Phi$ is defined as a $3$-dimensional subvariety of $\Tot \mathcal {W}$ given by the equation:
\begin{equation}\label{Ydefintro}
(\delta a)c = b^{m+1} + \varphi_2b^{m-1} + \ldots + \varphi_{m+1},
\end{equation}
where $\delta\in H^0(S, \mathcal O(D))$ is the unique up to scalar multiplication section with zeroes at divisor $D$; and $a, b$ and $c$ represent coordinates on the respective components of decomposition (\ref{Wdef}). This equation is well-defined, since both sides belong to $H^0(S, K_S(D)^{\otimes(m+1)})$. It can be shown that $Y_\Phi$ is smooth and has a holomorphically trivial canonical class (Propositions~\ref{Ysmooth}, \ref{Ycan}).

In the case $m = 1$, $Y_\Phi$ coincides with Calabi-Yau manifolds described by Smith in \cite{S13}. We conjecture that many parts of his paper should also generalize to $Y_\Phi,\ m\geq 1$ which we outline in more detail below. We refer the reader to the original paper for more details.

\medskip
Let $\mathcal D(Q_{\mathcal T,m}, W)$ denote the $3d$ Calabi-Yau category defined via Ginzburg algebra $\Gamma(Q_{\mathcal T, m}, W)$ (see \cite{G06}). Up to $A_\infty$-equivalence it depends only on the right-equivalence class of potential $W$. Moreover, categories corresponding to different choices of triangulation $\mathcal T$ are equivalent by results of Fock and Goncharov \cite{FG03} together with Keller and Yang \cite{KY09}. In the former paper it was shown that quivers $Q_{\mathcal T, m}$ for different triangulations are related by sequences of mutations (see (\ref{seed_mut})); the latter paper proves a general result that categories from quivers with potentials related by mutations are equivalent. Based on this remark, we can fix without loss of generality a triangulation of $\SSS$ and study right-equivalence classes of potentials on a fixed quiver.

For a generic choice of parameters determining the cohomology class $[\omega]\in H^2(Y_\Phi, \RR)$ of a K\"ahler form on $Y_\Phi$ and for every choice of background class $b\in H^2(Y,\ZZ_2)$, there is a well-defined $A_\infty$-category $\mathcal{F}(Y_\Phi,b)$, the $b$-twisted strictly unobstructed Fukaya category.

\begin{conjecture}\label{conject}
For every admissible choice of the class of K\"ahler form $[\omega]\in H^2(Y_\Phi,\RR)$ there exists a potential $W$ on $Q_{\mathcal{T}, m}$ defined up to a right-equivalence, and a fully-faithful embedding
\begin{equation}\label{Smithequivalence}
\mathcal{D}(Q_{\mathcal{T},m}, W)\hookrightarrow \mathcal{DF}(Y_\Phi, b_0)
\end{equation}
for certain background class $b_0\in H^2(Y_\Phi, \ZZ)$.

\end{conjecture}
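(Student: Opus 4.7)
The plan is to adapt Smith's strategy from \cite{S13} to higher rank, using the geometric input that the paper already constructs: a distinguished collection of Lagrangian $3$-spheres in $Y_\Phi$ indexed by the vertices of $Q_{\TT, m}$. The argument has three stages: (i) assemble the generating collection of Lagrangians; (ii) match their Floer-theoretic $\ext$-algebra with the Ginzburg algebra of $(Q_{\TT, m}, W)$; (iii) identify the full $A_\infty$-structure via a finite list of holomorphic disk counts, using Theorem \ref{mn1} and the $m=2$ finite-dimensionality result to rule out ambiguities at higher order.

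For step (i), I would work with the projection $\pi\colon Y_\Phi \to S$ (composition with the structure map of $\Tot\,\mathcal W$) and view equation (\ref{Ydefintro}) as defining a family of affine conics $(\delta a) c = P(b)$ with $P$ a polynomial of degree $m+1$ in $b$. Away from punctures the fibre is a smooth affine curve of genus zero, and the critical locus of $P$ (a set of $m$ branch points on $S$ for each root of $P$) produces nodal degenerations. A Lagrangian sphere in $Y_\Phi$ is obtained by parallel-transporting a vanishing $S^1$ in a fibre along a matching path on $S$ that joins two such critical values. The combinatorics of matching paths compatible with the triangulation $\TT$ reproduces the vertex set of $Q_{\TT,m}$: edges of $\TT$ yield the ``boundary'' vertices, while the internal vertices of the subquiver of Figure~\ref{triangle_quiver} correspond to paths entirely contained in a triangle. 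This is precisely the construction the paper develops in its final section.

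For step (ii), I would show that two Lagrangian spheres $L_u, L_v$ arising from matching paths that share exactly one endpoint intersect transversely in a single point, while disjoint matching paths yield disjoint spheres. Combined with the $3$-Calabi--Yau structure, this gives $HF^1(L_u,L_v) \simeq \kk^{\#\text{arrows } u\to v}$ and $HF^2 \simeq HF^1(L_v, L_u)^\vee$, matching the $\ext$-algebra of the Ginzburg dg-algebra on the nose. For step (iii), the $m_3$ products compute the cubic part of a superpotential, and more generally the coefficients of cyclic paths in $W$ are extracted as counts of pseudo-holomorphic disks with boundary alternating on the spheres $L_v$. The counts of the three primitive families $\mathfrak C$ (black, white, and puncture cycles) are computed directly: black and white cycles come from disks living in Lefschetz thimbles over a single triangle, and the puncture cycles $L_p^{(k)}$ arise from disks that rotate $k$ times around the singular fibre over $p$. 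This determines the primitive part of $W$ in the $(\kk^\times)^{(m-1)d+1}$ classified by Theorem \ref{mn1}(a). Any additional coefficients are controlled by Theorem \ref{mn1}(b) and, for $m=2$, are pinned down up to the residual $\AAA^1_{\kk}$-family identified in the second theorem.

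The main obstacle is step (iii): the disk counts require the existence of a Lefschetz-type fibration with clean fibrewise Calabi--Yau geometry, a careful transversality theorem for the boundary Lagrangians, and control over bubbling at punctures, which is where the background class $b_0$ enters to orient the moduli spaces and to cancel obstruction cocycles. A subsidiary difficulty is that the finite-dimensionality theorem is only proved here for $m=2$, so for $m \geq 3$ the argument would have to either assume its conjectural extension or replace it with a direct geometric vanishing of higher $m_k$-contributions. Finally, full faithfulness of (\ref{Smithequivalence}) requires a generation or dg-quotient argument, which I would obtain by showing that the $L_v$ split-generate the subcategory of compact objects supported away from $\pi^{-1}(\partial S)$, and then identifying the resulting $A_\infty$-algebra with Ginzburg's via the Keller--Yang formality criterion for $3$-CY algebras with the prescribed superpotential.
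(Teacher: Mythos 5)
You are proposing a proof of something the paper only \emph{conjectures}. Conjecture~\ref{conject} is not a theorem in this paper; the paper offers supporting evidence --- the cohomology computation of Proposition~\ref{topology}, the construction of topological $3$-spheres in Section~\ref{spheres}, and the finite-dimensionality results --- but explicitly defers ``the symplectic part of the story as well as many other details'' to future research. So there is no proof in the paper against which to compare your argument; at best, your sketch can be measured against the paper's \emph{strategy}.

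Measured that way, your outline is broadly consonant with Smith's program and the paper's expectations, but two points in step~(i) deviate from what the paper actually builds. First, you describe Lagrangian spheres as matching cycles over paths on $S$ joining critical values, transporting a ``vanishing $S^1$ in a fibre.'' This is Smith's $m=1$ picture, where the fibers of $Y_\Phi\to S$ are smooth affine quadrics isomorphic to $T^*S^2$. For $m>1$ the fibers $(\delta a)c = P(b)$ are degree-$(m+1)$ affine surfaces, not quadrics, and the vanishing-cycle description does not carry over naively. The paper handles this by replacing the base: it factors $\pi$ through the blow-up $T_\Phi\to S$ so that the residual map $\beta\colon Y_\Phi\to T_\Phi$ is a genuine \emph{conic} fibration degenerating along $\Sigma$, and produces $3$-spheres from embedded disks $f\colon B^2\to T_\Phi$ with $f(\partial B^2)\subset\Sigma$. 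Your matching-path picture on $S$ should be rewritten as a disk-on-$\Sigma$ picture on $T_\Phi$ to match the geometry for general $m$. Second, there is no ``Keller--Yang formality criterion for $3$-CY algebras'' in the literature that I can identify; Keller--Yang~\cite{KY09} establishes that quiver mutation induces derived equivalence of Ginzburg dg-categories, which the paper uses to justify fixing a triangulation, not to identify a Fukaya subcategory with a Ginzburg category. The identification in Smith's work is carried out via direct $A_\infty$-computations constrained by the $m=1$ analogue of the finite-dimensionality result, and that is the role you should assign to Theorems~\ref{mn1} and~\ref{mn3} here. You correctly note that for $m\geq 3$ the finite-dimensionality statement is only conjectural, so the program as you (and the paper) outline it remains incomplete beyond $m=2$.
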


\medskip
Smith proves this result for $m=1$ by taking a full subcategory $\mathcal A(\mathcal T)\subset \mathcal{F}(Y_\Phi, b_0)$ generated by a collection of Lagrangian $3$-spheres, and then showing that$$\mathcal{DA}(T) \simeq \mathcal{D}(Q_{\mathcal T, 1}, W).$$
From the geometric point of view, the quiver describing the subcategory comes from the intersection form of a particular collection of Lagrangian $3$-spheres. Namely, every oriented Lagrangian sphere corresponds to a vertex of a quiver, and the number of arrows between two vertices stands for the intersection number of associated spheres.

A crucial argument in Smith's proof of the equivalence~\ref{Smithequivalence} is that by finite-dimensionality result it is enough to know only finitely many multiplicative constants to recover a category up to equivalence. Observe that for $m=2$ this part of the conjectural generalization is already covered by our result about right-equivalence classes of potentials on $Q_{\TT,2}$. It remains to identify corresponding geometric objects in $Y_\Phi$ (e.g. K\"ahler form, collection of Lagrangian spheres, pseudo-holomorphic disks).

We propose a mechanism to generate $3$-spheres representing non-trivial homology classes in $Y_\Phi$~(Section \ref{spheres}). Roughly speaking, such a sphere can be constructed from any $1$-cycle on the spectral curve $\Sigma\subset \Tot(K_S(D))$ that is contractible when projected to $S$. By definition, spectral curve is given by equation:
\begin{equation}
  b^{m+1} + \varphi_2b^{m-1} + \ldots + \varphi_{m+1} = 0.
\end{equation}

To define these spheres more explicitly we need a slightly different point of view on $Y_\Phi$. Following Smith, we have described Calabi-Yau manifold as a quadric fibration with isolated singular fibers over complex curve $S$. Alternatively, it is useful to view $Y_\Phi$ as a conic fibration over a complex surface. To motivate this construction we observe that there is a natural factorization
\begin{equation}
  \begin{tikzcd}
    Y_\Phi \arrow{r}{\be'} \arrow[swap]{dr}{\pi} & \Tot{K_S(D)} \arrow{d}{\kappa'} \\
    & S
  \end{tikzcd}
\end{equation}
induced by the projection of $\mathcal W$ onto the middle direct summand in \ref{Wdef}. Generic fiber of $\be'$ is a conic. Along the spectral curve $\Sigma$ and away from $\kappa'^{-1}(D)$ it degenerates into a simple crossing $\{ac = 0\}\subset \CC^2$. Moreover, for $p\in D$ the $\be'$-preimage of $\kappa'^{-1}(p)$ consists of $m+1$ distinct planes each of which collapses to a point under $\be'$ ($m+1$ planes for each point in $D$).

We observe that in the diagram above $\Tot K_S(D)$ can be replaced by its blow-up $T_\Phi$ at points of intersection $\Sigma\cap \kappa'^{-1}(D)$. Then the projection of $3d$ Calabi-Yau manifold becomes a conic fibration degenerating to simple crossing along the spectral curve. The image of the projection in the blow-up is the complement to the vertical fibers over $D$.

Thus, any embedding of a closed topological $2$-disk
$$f: B^2 \longrightarrow T_\Phi$$
such that $f(\partial B^2)\subset \Sigma$ gives rise to a topological $3$-sphere in $Y_\Phi$. Indeed, over interior points of the disk the fiber of $Y_\Phi\rightarrow T_\Phi$ is isomorphic to a non-degenerate conic isomorphic to $\CC^\times$ and over the boundary it degenerates to the intersection of two complex lines $\{ac = 0\}$. In particular, the circle generating $H_1(\CC^\times)$ for a generic fiber shrinks to a point. Choosing a continuous family of such degenerating circles over $f(B^2\setminus \partial B^2)$ gives the desired $3$-sphere.

In particular, a more detailed analysis of the topology of $T_\Phi$ shows that there are such spheres corresponding to loops on $\Sigma$ with contractible image in $C$. Under this correspondence the intersection form for $3$-spheres in $Y_\Phi$ translates to intersection form for loops on $\Sigma$.

\smallskip
Next step is to find a collection of $3$-spheres such that the intersection form corresponds to a quiver $Q_{\TT,m}$. This step would rely on the geometry associated to $\Phi$ which may be a complicated topic in general. To that end, we check the consistency of the conjecture on the level of ranks of homology groups. We use Decomposition theorem to compute (co)homology groups of $Y_\Phi$ (Proposition~\ref{topology}). In particular, we see that the rank of the most complicated term $H_3(Y_\Phi, \QQ)$ equals the number of vertices of $Q_{\TT,m}$:
\begin{proposition}
$$
\operatorname{rk}H_3(Y_\Phi, \QQ) = (m^2-1)(2g-2+d)
$$
\end{proposition}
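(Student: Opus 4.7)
The plan is to compute $H^3(Y_\Phi,\QQ)$ via the Leray spectral sequence for the projection $\pi\colon Y_\Phi\to S$, whose generic fibre is the $A_m$ Milnor fibre $F=\{ac=g_p(b)\}$ with $H^\ast(F;\QQ)=(\QQ,0,\QQ^m,0)$, and whose fibre over each $p\in D$ is a disjoint union of $m+1$ copies of $\CC^2$ (one for each root of the defining polynomial $b^{m+1}+\varphi_2(p)b^{m-1}+\ldots+\varphi_{m+1}(p)$, the constraint $\delta(p)ac=0$ becoming vacuous when $\delta(p)=0$).

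First I would pin down the constructible sheaves $R^q\pi_\ast\QQ$ on $S$. Away from $D$ one has $R^0\pi_\ast\QQ=\QQ_{S\setminus D}$ and $R^2\pi_\ast\QQ=\mathcal L$, a rank-$m$ local system of vanishing $A_m$-cycles whose monodromy around each $p\in D$ is a Coxeter element of the Weyl group of $A_m$ (hence has no invariants or coinvariants); the other higher direct images vanish on $S\setminus D$. At a point $p\in D$ the local model of $Y_\Phi$ is the smooth hypersurface $\{tac=g_t(b)\}$, and the long exact sequence of the pair $(\pi^{-1}(U),\pi^{-1}(U\setminus\{p\}))$ combined with the Thom isomorphism along the codimension-one divisor $\pi^{-1}(p)\simeq\bigsqcup_{m+1}\CC^2$ pins down the jumps of $R^q\pi_\ast\QQ$ at $D$ in degrees $2$ and $3$: the rank of $R^2\pi_\ast\QQ$ grows from $m$ to a larger value, and $R^3\pi_\ast\QQ$ acquires a skyscraper contribution.

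Next I would assemble $H^\ast(S,R^q\pi_\ast\QQ)$ through short exact sequences relating $R^q\pi_\ast\QQ$ to $j_!\mathcal L$ (and to $\QQ_S$) modulo skyscrapers supported on $D$, where $j\colon S\setminus D\hookrightarrow S$. The central analytic input is the Euler--Poincar\'e formula
\begin{equation*}
\chi(S\setminus D,\mathcal L)=m(2-2g-d),
\end{equation*}
together with the vanishing of $H^0(S\setminus D,\mathcal L)$ and $H^2_c(S\setminus D,\mathcal L)$ coming from nontriviality of the monodromy, which yields $H^1_c(S\setminus D,\mathcal L)=\QQ^{m(2g-2+d)}$. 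The $E_\infty$-page contributions to $H^3(Y_\Phi,\QQ)$, namely $E_\infty^{1,2}=H^1(S,R^2\pi_\ast\QQ)$ and $E_\infty^{0,3}=H^0(S,R^3\pi_\ast\QQ)$, are then read off once the connecting maps between the skyscraper pieces on $D$ and the bulk cohomology of $\mathcal L$ are identified.

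The principal obstacle is precisely the bookkeeping of those connecting homomorphisms: the jumps of $R^2\pi_\ast\QQ$ and $R^3\pi_\ast\QQ$ at $D$ enter the spectral sequence through maps that encode the specialization of $\mathcal L$ into each skyscraper and hence the monodromy of $\mathcal L$ around the punctures. Extracting the sharp dimension $(m^2-1)(2g-2+d)$ requires controlling these contributions exactly. A cleaner alternative, as alluded to in the introduction, is to apply the Beilinson--Bernstein--Deligne--Gabber decomposition theorem to a proper compactification of $\pi$ (or of $\beta'\colon Y_\Phi\to T_\Phi^{\circ}$, compactified fibrewise by $\PP^1$): the IC-sheaf summands indexed by the strata $S\setminus D$ and $D$ then separate these contributions automatically and yield the stated Betti number after subtracting the boundary at infinity.
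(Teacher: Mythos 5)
Your Leray approach via the quadric fibration $\pi\colon Y_\Phi\to S$ has a serious gap: the sheaf $R^2\pi_*\QQ$ is \emph{not} a local system on $S\setminus D$. The map $\pi$ has singular fibers not only over $D$ but also over the branch locus $Z\subset S$ of the spectral cover (the zeros of the discriminant of $\Phi(b)$, of which there are $m(m+1)(2g-2+d)$), where $\Phi_x(b)$ acquires a double root and the fiber $\{ac=\Phi_x(b)\}$ develops a node. You never mention this stratum, yet it is where almost all of $H_3$ comes from: each nodal degeneration produces a vanishing $2$-cycle in the fiber and hence a Lefschetz thimble in $Y_\Phi$, and these $m(m+1)(2g-2+d)$ classes are precisely the Lagrangian $3$-spheres the whole construction is after. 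The $m(2g-2+d)$ you extract from $H^1_c$ of the local system is only the ``horizontal'' remainder; the total is $m(2g-2+d)+m(m+1)(2g-2+d)=m(m+2)(2g-2+d)$, which is what Proposition~\ref{topology} actually proves (and matches the vertex count of $Q_{\TT,m}$; the coefficient $(m^2-1)$ in the displayed statement appears to be a typo). Your monodromy claim is also off: in a small punctured disk around $p\in D$ the family is isomorphic to a product $\{ac=g_t(b)\}\times\DD^*$ after the substitution $a\mapsto ta$, so the local monodromy there is trivial rather than a Coxeter element; the interesting (transvection) monodromy is around $Z$, not $D$.

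The paper sidesteps all of this by using the \emph{conic} fibration $\beta\colon Y_\Phi\to T_\Phi$ rather than the quadric fibration over $S$. After a fiberwise $\PP^1$-compactification $X_\Phi\to T_\Phi$ the generic fiber is $\PP^1$ and degenerates to a nodal conic exactly over the spectral curve $\Sigma\subset T_\Phi$, so the Decomposition Theorem gives a very clean answer: $R\bar\beta_*\QQ\simeq\QQ_{T_\Phi}\oplus\QQ_{T_\Phi}[-2]\oplus (i_\Sigma)_*\QQ_\Sigma[-2]$, and the genus of $\Sigma$ enters $H_3$ directly. In the Leray picture over $S$ the two strata $Z$ and $D$ would have to be treated by hand along with the nontrivial extension data, whereas over $T_\Phi$ the entire degeneration locus is a single smooth curve. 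If you do want to pursue the base-$S$ spectral sequence, you must at minimum stratify by $Z$ and $D$, keep track of the Picard--Lefschetz transvections around $Z$, and compute the stalks of $R^q\pi_*\QQ$ at both kinds of special points (noting that the stalk at $p$ is tube cohomology, not the cohomology of $\pi^{-1}(p)$); a good sanity check is the Euler characteristic $\chi(Y_\Phi)=(m+1)^2(2-2g)-m(m+1)d=2-2g(\Sigma)$, which your current accounting does not reproduce.
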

\noindent This agrees with Smith's conjecture that collections of Lagrangian spheres must generate Fukaya categories in appropriate sense.

\smallskip
One can strengthen this claim slightly using Goncharov's notion of ``topological spectral cover'' \cite{G16}. Under certain assumptions about the relationship between topological spectral cover and holomorphic spectral curve $\Sigma$, the construction of Goncharov would imply that there is a collection of loops in $\Sigma$ such that pairwise intersections are indeed described by the quiver $Q_{\mathcal{T},m}$. Thus, in this context it would be natural to expect that the candidate for Smith's subcategory $\mathcal{DA}(T)$ is generated by Lagrangian spheres associated to collection of loops provided by Goncharov's topological spectral cover.

\subsection{acknowledgements} I am very grateful to Alexander Goncharov for introducing me to the subject and sharing innumerous ideas and insights that made this work possible. I have benefitted a lot from detailed explanations of Zhiwei Yun.


\section{Cluster structure on the moduli spaces of decorated/framed local systems.}

\subsection{Decorated surfaces.} Throughout the paper $\mathbb{S}$ denotes a compact oriented surface of genus $g$ with a set $\{x_1, \ldots x_d\}$ of marked points (also called punctures), such that $d > 0$ and $2 - 2g - d < 0$ (negative Euler characteristic requirement). We refer to this this object as \emph{a marked surface}.

To define various structures related to a marked surface we need to fix its triangulation, this requires following definitions. An ideal arc on $\SSS$ is a non-selfintersecting curve with endpoints at punctures $\{x_1, \ldots x_d\}$, considered up to isotopy relative to these markings. We demand that ideal arcs are not contractible loops on $\SSS-\{x_1, \ldots x_d\}$. A triangulation of $\SSS$ formed by ideal arcs is called an \emph{ideal triangulation}. More precisely, it is a maximal collection of pairwise different non-intersecting ideal arcs on $\SSS$. It is easy to see that the complement to all arcs of an ideal triangulation consists of triangles.

\begin{figure}
	\centering
	\includegraphics{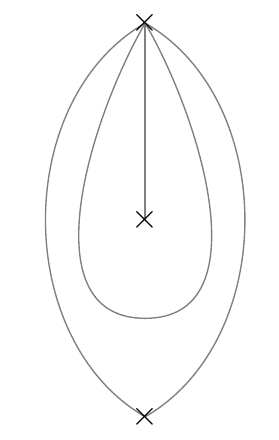}
	\captionof{figure}{Self-folded triangle}
	\label{self-folded}
\end{figure}

For our main results it is important that ideal triangulation is sufficiently generic. For example, in some degenerate cases it may occur that an ideal arc forms two sides of a triangle. Then the triangle is \emph{self-folded} (shown in Figure \ref{self-folded}). This situation is eliminated by the requirement that ideal triangulations in this paper satisfy following additional conditions:

\noindent\begin{minipage}{.01\textwidth}
\begin{equation}\label{triangulationcond}
\end{equation}
\end{minipage}%
\begin{minipage}{.95\textwidth}
\begin{itemize}
\item there are no arcs with coinciding end-points;
\item every vertex has valence at least $3$.
\end{itemize}
\end{minipage}

\noindent It can be shown that if $g > 0$ and $d \geq3$, or $g = 0$ and $d\geq 4$, then marked surface possesses an ideal triangulation satisfying these conditions.


It is well-known that the number of ideal arcs in any ideal triangulation of a marked surface of genus $g$ with $d$ punctures equals $6g-6+3d$. Furthermore, there is a simple way to pass between different ideal triangulations. Namely, take an edge~$e$ which is not a folded edge of a self-folded triangle. Then~$e$ is a diagonal of the quadrilateral formed by two triangles sharing~$e$, and one can replace this edge by another diagonal, thus passing to a new triangulation. We say that this move is a \emph{flip} of the initial triangulation at edge~$e$. Any two ideal triangulations are related by a sequence of flips, however such sequence is not unique.

\subsection{Cluster structure on algebraic varieties.}\label{clustervar} The main object of our study is moduli space of framed/decorated local systems for marked surfaces as defined in \cite{FG03}. For the purpose of this paper we don't need most of geometric definitions, since we are mainly interested in certain properties of explicit quivers arising from cluster structures on moduli spaces. For more material on cluster structure in this context we refer reader to the original paper of Fock and Goncharov.

Of particular relevance for us is the discovery that for semisimple Lie groups of type $A_m$, moduli spaces of decorated/framed local systems on $\SSS$ form a \emph{cluster ensemble} (\emph{cf.} \cite{FG03p2}). The reader not interested in cluster varieties perspective on the subject can freely pass to Section~\ref{QPsec}, where we discuss purely quiver-based part of the story.

A cluster ensemble is a pair of spaces $(A, X)$ with a canonical map $p:A\longrightarrow X$, where $A$ (resp. $X$) carries cluster $\mathcal{A}$- (resp. $\mathcal{X}$-) structure. This means that these varieties are covered by collections of Zariski open split algebraic tori $\mathbb{G}_m^n$, with transition maps prescribed by certain algebraic rules.

To make it precise we need a series of definitions from \cite{FG03}, that are simplified slightly for our discussion:
\begin{definition}
A \textbf{seed} is a pair $\ii = (I, \varepsilon_{ij})$, where $I$ is a finite set of vertices and $\varepsilon_{ij}$ is a skew-symmetric integer-valued function on $I\times I$.
\end{definition}

\noindent \emph{Remark.} In many papers it can be found that a seed is defined as a quadruple $(I, J, \varepsilon, d)$. However, in our case $J$ and $d$ components are trivial: the set of frozen vertices $J\subset I$ is empty, and the symmetrizer function~$d:I\rightarrow \ZZ$ is identically $1$. Thus, we usually omit this additional data to simplify our notation.

\medskip
For every seed $\textbf{i} = (I, \varepsilon_{ij})$ and $k \in I$ one can produce  a new seed called a \emph{mutation} of $\textbf{i}$ in the direction $k$, denoted by $\mu_k (\ii) = (I', \varepsilon_{ij}')$. By definition, finite sets $I$ and $I'$ coincide, and $\varepsilon_{ij}'$ is defined by the following formulas:

\begin{equation}\label{seed_mut}
\varepsilon_{ij}'=
\begin{cases}
-\varepsilon_{ij} & \text{if}  \ \ \ k \in \{i,\ j\} \\
\varepsilon_{ij} + \frac{|\varepsilon_{ik} |\varepsilon_{kj} + \varepsilon_{ik} |\varepsilon_{kj}|}{2} & \text{if}  \ \ \ k \notin \{i,\ j\}
\end{cases}
\end{equation}

A seed $\ii$ gives rise to two split algebraic tori $A_\ii$ and $X_\ii$ (sometimes called cluster tori), both isomorphic to $\mathbb{G}_m^{I}$. We denote coordinates on these tori by $(A_i)_{i\in I}$ and $(X_i)_{i\in I}$ respectively. To any mutation $\mu_k$ there corresponds a rational isomorphisms of split algebraic tori: $\mu_k: A_{\ii} \longrightarrow A_{\mu_k(\ii)}$ and $\mu_k: X_{\ii} \longrightarrow X_{\mu_k(\ii)}$ given by the following formulas:

\begin{equation}\label{a_mut}
\begin{cases}
\mu_k^* A_i= \frac{\prod\limits_{j: \varepsilon_{ij}>0}A_j^{\varepsilon_{ij}} + \prod\limits_{j: \varepsilon_{ij}<0}A_j^{-\varepsilon_{ij}}}{A_k} & \text{if}  \ \ \ i = k\\
\mu_k^* A_i  = A_i& \text{if}  \ \ \ i \neq k
\end{cases}
\end{equation}

\begin{equation}\label{x_mut}
\begin{cases}
\mu_k^* X_i= X_k^{-1} & \text{if}  \ \ \ i = k\\
\mu_k^* X_i  = X_i(1+X_k^{-\operatorname{sgn}(\varepsilon_{ik})})^{-\varepsilon_{ik}} & \text{if}  \ \ \ i \neq k
\end{cases}
\end{equation}

We say that an algebraic variety $A$ (resp. $X$) has a cluster $\mathcal{A}$- (resp. $\mathcal{X}$-) {structure} if there is a collection of seeds and a collection of open embeddings of corresponding cluster tori $A_\ii \longrightarrow A$ (resp. $X_\ii \longrightarrow X$), such that whenever two seeds are related by mutation, the corresponding transition map is given by formulas \ref {a_mut} (resp. \ref{x_mut}).

In other words, exhibiting a cluster structure on a variety amounts to providing a collection of open tori, such that gluing maps between them are controlled by seeds attached to every torus. For every mutation one has to apply transition formulas given above to pass to another chart, and change the seed according to the rule \ref{seed_mut}.

\noindent\emph{Remark.} Note that in principle, one can mutate a seed in any direction $k\in I$, this in turn will give rise to a rational map of cluster tori. However, this leads to an enormous family of seeds. We give a more flexible definition, which allows to chose only specific mutations of seeds (e.~g. we can take a cluster structure with only one seed where no mutations allowed).
\medskip

To define a cluster structure on a variety it is usually convenient to use an alternative language describing seeds via quivers.

\begin{definition}
A \textbf{quiver} is consists of data $(Q_0, Q_1,s,t)$, where first two entries are sets of vertices and arrows respectively; and $s,t: Q_1\longrightarrow Q_0$ are ``source'' and ``target'' maps that send an arrow $a\in Q_1$ to its initial and terminal points in $Q_0$.
\end{definition}

It is clear that there is a one-to-one correspondence between seeds $\ii$ with vertex set $I$ and quivers $Q$  without loops and oriented $2$-cycles with $Q_0 = I$. Concretely, matrix $(\varepsilon)_{i,j\in I}$ describes the number of arrows between $i$ and $j$ taken with sign depending on their direction (our convention is that if $\varepsilon_{ij}>0$ then arrows are directed from $i$ to $j$).

\begin{figure}
	\centering
	\begin{minipage}[b]{.33\textwidth}
		\centering
		\includegraphics{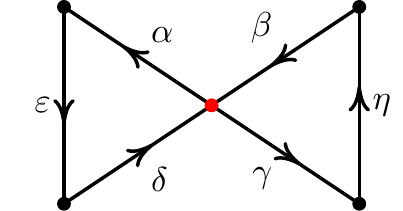}

		\vspace{15pt}
	\end{minipage}%
	\begin{minipage}[b]{.33\textwidth}
		\centering
		\includegraphics{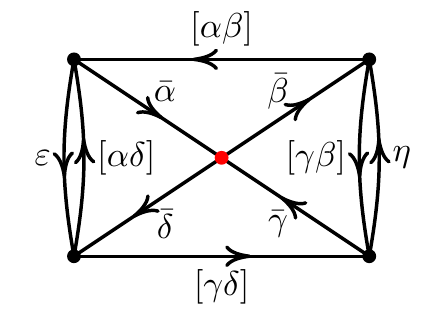}
	\end{minipage}%
	\begin{minipage}[b]{.33\textwidth}
		\centering
		\includegraphics{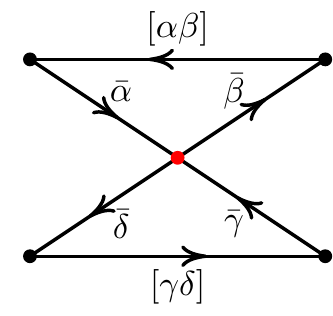}
\end{minipage}
	\captionof{figure}{Example of quiver mutation.}
	\label{quivermut}
\end{figure}

In these terms quiver mutations have nice combinatorial meaning. Let $Q = (Q_0, Q_1,s,t)$ and $\mu_k(Q) = (Q'_0, Q'_1,s',t')$ be two quivers corresponding to seeds $\textbf{i} = (I,\varepsilon_{ij})$ and $\mu_k(\textbf{i})$ related by mutation at vertex $k\in I$. Then vertex sets of $Q$ and $\mu_k(Q)$ coincide and arrow sets of the mutated quiver can be described in terms of $Q$ by the following combinatorial rules (\emph{cf.} \ref{seed_mut}):

\begin{enumerate}
\item Every $\al\in Q_1$ with $s(\al), t(\al)\neq k$ belongs to a mutated quiver $\mu_k(Q)$;
\item If $\al\in Q_1$ adjacent to $k$ (i.e. $s(\al) = k$ or $t(\al) = k$), there is an arrow $\bar{\al}$ of $\mu_k(Q)$ going in reversed direction;
\item For every pair of arrows $\al,\be\in Q_1$ with $s(\al) = k$ and $t(\be) = k$, one arrow $[\al\be]$ from $t(\al)$ to $s(\be)$ is added; and if after this operation there appear arrows between $t(\al)$ and $s(\be)$ in going in opposite directions, they must be cancelled out by deleting corresponding $2$-cycles.
\end{enumerate}
\noindent  Note that the resulting quiver has no loops or oriented $2$-cycles. An example of mutation is given in Figure \ref{quivermut}.
\medskip

Using quiver description, defining a cluster structure on a variety is equivalent to giving a collection of quivers with variables attached to their vertices, and specifying what mutations are allowed between these quivers.

\subsection {Cluster structure on moduli spaces of local systems.}\label{clusterstr} Here we define quivers discovered by Fock and Goncharov in \cite{FG03} arising from the cluster structure on moduli spaces of framed/decorated local systems. Fix $m\geq 1$ (it corresponds to local systems of type $A_m$), in what follows, we construct a quiver $Q_{\TT,m}$ associated to every ideal triangulation $\TT$ of $\SSS$.

In fact, it is convenient to imagine that quivers in this discussion are embedded in $\SSS$. The vertex set of $Q_{\mathcal T, m}$ is described as follows. Consider the standard 2-dimensional simplex $\Delta = \{(x,y,z)\in \RR_{\geq0}^3\mid x+y+z = m+1\}$ and subdivide it into smaller simplexes by $3m$ planes $x = k, y =k, z = k,\  0<k<m+1$ (an example for $m=4$ is shown in Fig. \ref{mtriangulation}). Thus, vertices of these subdivision coincide with integral points $\ZZ^3\cap\Delta$, and they are arranged as vertices of triangles of two types:

\begin{enumerate}\label{triangles}
\item  white (``upward'') triangles with vertices $(a+1,b,c),(a, b+1,c),(a,b,c+1)$ for all triples of integers $(a,b,c)$ with $a+b+c = m$;

\item black (``downward'') triangles $(\bar{a},\bar{b}+1,\bar{c}+1),(\bar{a}+1,\bar{b},\bar{c}+1),(\bar{a}+1,\bar{b}+1,\bar{c})$ for all triples of integers $(\bar{a},\bar{b},\bar{c})$ with $\bar{a}+\bar{b}+\bar{c} = m-1$.
\end{enumerate}

\begin{figure}
	\centering
	\begin{minipage}{.5\textwidth}
		\centering
		\captionsetup{width=1.2\textwidth}
		\includegraphics{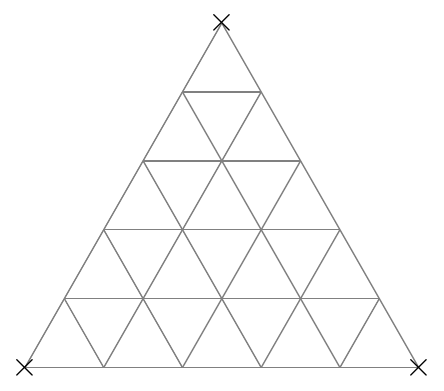}
		\captionof{figure}{$4$-triangulation of $\Delta$}
		\label{mtriangulation}
	\end{minipage}%
	\begin{minipage}{.5\textwidth}
		\centering
		\captionsetup{width=1.2\textwidth}
		\includegraphics{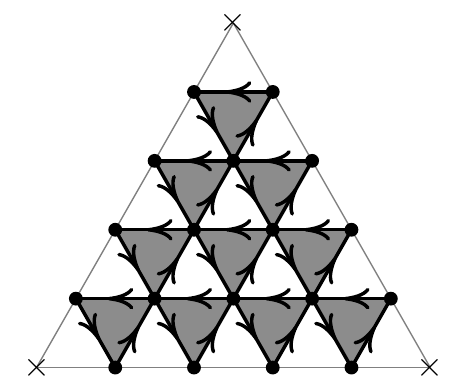}
		\captionof{figure}{Corresponding subquiver of $Q_{\TT,4}$}
		\label{triangle_quiver}
	\end{minipage}
\end{figure}

\noindent We orient edges of every downward triangle so that they go in counterclockwise order and view resulting union of arrows as the standard quiver embedded in $\Delta$ (see Fig \ref{triangle_quiver}). In order to obtain quiver $Q_{\TT,m}$ embedded in $\SSS$, we need to identify every triangle of $\TT$ with $\Delta$ and transport the standard quiver to the surface. Note that all identifications must agree on the edges and induce same orientation on the surface. Thus, there are $m$ vertices of $Q_{\TT,m}$ sitting on every edge of ideal triangulation. Informally one can think of this operation as gluing quiver $Q_{\TT,m}$ from standard quivers sitting on copies of $\Delta$, one copy for each triangle of $\TT$.

\smallskip
\noindent\emph{Remarks:} 1. It can be shown explicitly, that quivers $Q_{\TT,m}$ corresponding to different ideal triangulations are related by sequences of mutations. In fact, since any two such triangulations are related by a sequence of flips, it is enough to find a sequence of mutations that connects quivers for two flipped triangulations. For example, in the case $m = 1$ quivers $Q_{\TT,1}$ have a single vertex siting on every edge, and the sequence of mutations realizing a flip at an edge is just a single mutation in the corresponding vertex. However, for $m > 1$ the situation is more complicated and one needs $\frac{m(m+1)(m+2)}{6}$ mutations to pass between quivers assigned to triangulations related by a single flip.

\noindent 2. Strictly speaking, moduli spaces in this paper are not algebraic varieties but algebraic stacks. We can ignore this difference in this paper by passing to an open part of the moduli spaces.

\noindent 3. Variables attached to vertices of quivers $Q_{\TT,m}$ have a very concrete geometric meaning. They arise from certain coordinates on spaces of configurations of linear subspaces in $(m+1)$-dimensional vector space. Their precise description can be found in \cite{FG03} (or \cite{G16} for a more general class of quivers associated to ideal webs).

\section{Quivers with potential}\label{QPsec}
\subsection{Definition of quivers with potential.} The exposition of this subsection mainly follows \cite{DWZ07}.

Fix a field of characteristic zero $\mathbf{k}$ and a quiver $Q = (Q_0,Q_1, s,t)$. Let $R=\bigoplus_{i\in Q_0}\kk \cdot e_i$ be a commutative algebra formed by $|Q_0|$ idempotents associated to vertices of the quiver (i.e. $e_i\cdot e_j = \delta_{ij}$). Linear space spanned by quiver's arrows $\displaystyle{A = \bigoplus_{\al\in Q_1}\kk \cdot \al}$ has a natural $R$-bimodule structure:
\begin{equation}\label{comprule}
e_i\cdot\al = \delta_{i,t(\al)}\al; \ \ \al\cdot e_j = \delta_{s(a),j}\al
\end{equation}
Associated to any quiver $Q$ is its path algebra $R\langle Q\rangle$, defined as a graded tensor algebra of $A$ over $R$:
\begin{definition}
\textbf{Path algebra} of quiver $Q$ is the graded algebra:
$$R\langle Q\rangle=\bigoplus_{n=0}^\infty \underbrace{A\otimes_R A\ldots\otimes_R A}_{n},$$
with natural multiplication given by tensor product.
\end{definition}
There is a homogeneous basis of path algebra that consists of tensor products of the form $\al_1\otimes\al_2\otimes\ldots\otimes\al_n$ with $\al_j\in Q_1$ (hereafter tensor product signs are omitted). It is natural to call such elements \emph{paths}, multiplication of two paths is simply given by concatenation, whenever paths are composable, and zero otherwise. Note that with our convention~(\ref{comprule}) paths are composed from left to right as usual set theoretic maps. It is convenient to extend source and target maps $s,t$ to all paths in an evident way.

Path algebra possesses natural grading by path length. In many cases we need to work with \emph{completed path algebra} $R\langle\langle Q\rangle\rangle$ where the completion is taken with respect to degree. Elements of this new algebra can be represented as possibly infinite linear combination of paths, where only finitely many paths of given degree occur.

\smallskip

Next we pass to the definition of potentials:
\begin{definition}\label{potdef}
Let $R\langle\langle Q\rangle\rangle_{\operatorname{cyc}}$ be the span of all cyclic paths (i.e. paths $P$ with $s(P) = t(P)$), considered up to cyclic shift:
$\al_1 \al_2\ldots \al_n \longleftrightarrow \al_n\al_1\ldots \al_{n-1}.$

Elements of this space are called \textbf{potentials}. A quiver with potential is a pair $(Q,W)$ with $W\in R\langle\langle Q\rangle\rangle_{\operatorname{cyc}} $.
\end{definition}
\noindent \emph{Remark:} A slightly more conceptual way to define potentials is by taking the space of functionals on the quotient $R\langle Q\rangle/ [R\langle Q\rangle,R\langle Q\rangle]$. This description clarifies the origin of cyclic shifts.

\smallskip

Endomorphisms of completed path algebra over $R$ induce action on the subspace of cyclic paths and give rise to a notion of \emph{right-equivalence} of quivers with potentials that we describe below. It is easy to see that any $R$-bimodule homomorphism $\ph:A\longrightarrow R\langle\langle Q \rangle\rangle$ can be extended to endomorphism of the whole completed path algebra. Conversely, any endomorphism of the algebra can be restricted to arrow space $A$. Action of endomorphism $\ph$ on $A$ can be decomposed as a sum $\ph = \ph^{\diag}+\ph^{\un}$, where $\ph^{\diag}: A \rightarrow A$ is the degree preserving part, and $\ph^{\un}$ maps the arrow space to the subspace of $R\langle\langle Q \rangle\rangle$ spanned by paths of length at least $2$.

We cite Proposition 2.4 from \cite{DWZ07}, slightly adapted to our notations:
\begin{proposition}\label{phi}
Any pair $(\ph^{\diag}, \ph^{\un})$ of $R$-bimodule homomorphisms $\ph^{\diag}:A \longrightarrow A$ and $\ph^{\un}: A \longrightarrow R\langle\langle Q \rangle\rangle_{\geq 2}$ gives rise to a unique endomorphism $\ph$ of the completed path algebra, such that $\ph |_R = \operatorname{id}$ and $\ph |_A = (\ph^{\diag}, \ph^{\un})$. Furthermore, $\ph$ is an isomorphism if and only of $\ph^{\diag}$ is an $R$-bimodule isomorphism.
\end{proposition}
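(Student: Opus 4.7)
The plan follows the universal property of the (completed) tensor algebra combined with a filtration argument by path length. Since $R\langle Q\rangle$ is the tensor algebra over $R$ generated by the $R$-bimodule $A$, the $R$-bimodule map $\ph|_A = \ph^{\diag} + \ph^{\un} : A \to R\langle\langle Q\rangle\rangle$ extends uniquely to an $R$-algebra homomorphism on the uncompleted path algebra by the rule $\ph(\al_1\al_2\cdots\al_n) = \ph(\al_1)\ph(\al_2)\cdots\ph(\al_n)$. The key observation for passing to the completion is that $\ph(\al_i)$ lies in $R\langle\langle Q\rangle\rangle_{\geq 1}$, so $\ph$ sends a length-$n$ path into $R\langle\langle Q\rangle\rangle_{\geq n}$; hence the image of an infinite linear combination of paths contributes only finitely many terms to each graded piece, and the extension to $R\langle\langle Q\rangle\rangle$ is well-defined and continuous in the length topology. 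Uniqueness is forced since any extension is determined on $R$, on $A$, and on products of arrows.

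For the ``iso'' direction: the algebra homomorphism $\ph$ preserves the decreasing filtration $R\langle\langle Q\rangle\rangle_{\geq k}$, and the induced map on the degree-$1$ associated graded quotient is precisely $\ph^{\diag}$. If $\ph$ is invertible, then so is its associated graded map, and in particular so is $\ph^{\diag}$.

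For the converse, assume $\ph^{\diag}$ is an isomorphism of $A$. First I would reduce to the case $\ph^{\diag} = \operatorname{id}_A$: let $\tilde\ph$ denote the algebra endomorphism extending $(\ph^{\diag})^{-1}$ with zero unipotent part; it strictly preserves the grading and is clearly invertible, its two-sided inverse being the analogous extension of $\ph^{\diag}$. After composing with $\tilde\ph$ on the appropriate side, we may assume $\ph|_A = \operatorname{id}_A + \ph^{\un}$ with $\ph^{\un}$ landing in $R\langle\langle Q\rangle\rangle_{\geq 2}$; consequently $\ph$ sends any length-$n$ path to itself plus terms of length $\geq n+1$. I would then construct the compositional inverse $\psi$ by prescribing $\psi|_R = \operatorname{id}$, $\psi^{\diag} = \operatorname{id}_A$, and determining $\psi^{\un}$ inductively by path length from the relation $\psi(\ph(\al)) = \al$ for every $\al \in A$. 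At step $n$ the unknown degree-$n$ component of $\psi^{\un}$ is uniquely specified by already-constructed lower-degree data, so the recursion closes and the resulting series converges in the completion. The extension of this $(\psi^{\diag}, \psi^{\un})$ by the existence part gives an algebra endomorphism $\psi$. Since both $\psi\circ\ph$ and $\operatorname{id}$ are algebra endomorphisms fixing $R$ which agree on $A$, the uniqueness statement from the first part forces $\psi\circ\ph = \operatorname{id}$ on all of $R\langle\langle Q\rangle\rangle$; the identity $\ph\circ\psi = \operatorname{id}$ follows by a symmetric construction.

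The most delicate point is the convergence bookkeeping in the completion: verifying that the formal substitution defining $\ph$ on a power series genuinely yields only finitely many contributions at each length, and that the inductive construction of $\psi^{\un}$ stays within $R\langle\langle Q\rangle\rangle_{\geq 2}$. Once this filtration framework is in place, both halves of the proposition are essentially formal consequences of the universal property of the tensor algebra.
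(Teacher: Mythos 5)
Your proof is correct. The paper itself does not prove this proposition but cites it verbatim as Proposition 2.4 of Derksen–Weyman–Zelevinsky (\cite{DWZ07}); your argument is essentially the standard one given there: extend via the universal property of the tensor algebra $T_R(A)$, check convergence degree by degree using that $\ph$ maps length-$n$ paths into $R\langle\langle Q\rangle\rangle_{\geq n}$, read off $\ph^{\diag}$ as the degree-one piece of the associated graded for the forward implication, and for the converse reduce to the unitriangular case $\ph^{\diag}=\operatorname{id}$ and build the inverse by induction on path length.
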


\noindent Clearly, any endomorphism of the completed path algebra induces a map on its cyclic part $R\langle\langle Q \rangle\rangle_{\operatorname{cyc}}$.

\begin{definition}
Two potentials $W$ and $W'$ on quiver $Q$ are called \textbf{right-equivalent} if there is an automorphism of the completed path algebra $\ph$ such that $\ph(W) = W'$.
\end{definition}

Denote the group of automorphisms of $R\langle\langle Q \rangle\rangle$ by $\GG$. There are two subgroups, described in terms of the decomposition $\ph = (\ph^{\diag}, \ph^{\un})$ from Proposition \ref{phi}:
\begin{itemize}
\item $\GG^{\diag}$ is the subgroup of automorphisms with $\ph^{\un} = 0$;
\item $\GG^{\un}$ is the subgroup of automorphisms with $\ph^{\diag} = \operatorname{id}$.
\end{itemize}
One has a semi-direct product presentation: $\GG=\GG^{\un}\rtimes\GG^{\diag}$. We call elements of the first and second factor ``diagonal'' and ``unitriangular'' right-equivalences respectively.

\subsection{Cutting operations} In this subsection we assume that all arrow spaces of $Q$ are at most one dimensional. Definitions and constructions below are borrowed from from \cite{A17}:
\begin{definition}\label{chord}
A \textbf{chord} in a cycle $\al_1\al_2...\al_n$ of a quiver $Q$ is a triple $(\be,i,j)$, where $\be$ is an arrow in $Q$ such that $t(\be) = t(\al_i)$ and $s(\be) = s(\al_j)$ with $i\neq j$. The term ``chordless'' is used for cycles without chords.
\end{definition}

There are two important ``cutting operations'' defined for every chord of a cycle:
\begin{definition}\label{cut}
Let $(\be, i, j)$ be a chord of cycle $C =  \al_1\ldots \al_n$, then:
\begin{itemize}
\item $\cut_{\be}C = \al_1...\al_{i-1}\be\al_{j+1}...\al_n$  is the operation that produces a new cycle;
\item $b\Rightcircle C = \al_i\ldots \al_j$ produces a new path form $s(\be)$ to $t(\be)$.
\end{itemize}
\end{definition}
The first operation can be extended to multiple chords. If $(\be_k, i_k, j_k)_{k=1...N}$ is a collection of chords in cycle $C$ and the sequence of arrows $(\al_{i_1}, \al_{j_1},\al_{i_2},\al_{j_2},\ldots \al_{i_N}, \al_{j_N})$ respects cyclic order of arrows of $C$, we can perform cutting along the whole collection of chords at once, getting a cycle denoted $\cut_{\be_1...\be_N}C$. Collections of chords for which such cyclic order condition is satisfied will be called \emph{nonintersecting chords}. We also use a convention $\cut_\varnothing C = C$ for cutting along empty collection of chords (empty collection is considered nonintersecting). When cyclic indices in a chord are clear from the context, we usually omit them and denote a chord simply by a single arrow $\be$.

For any $U\in R\langle\langle Q\rangle\rangle$ and any path $P$ denote by $U[P]$ the coefficient of $P$ in the expression of $U$. The following Lemma is a direct consequence of constructions and explains the significance of definitions above:
\begin{lemma}\label{cutlemma}
Let $(Q,W)$ be a quiver with potential, and let $C$ be any cycle in $Q$. For any right-equivalence $\varphi = (\ph^{\diag}, \ph^{\un})$ with trivial diagonal part $\ph^{\diag} = \operatorname{id}$, one has:
\begin{equation}
\varphi(W)[C] = \sum_{\be_1,\be_2,\ldots \be_N} W[\cut_{\be_1\be_2\ldots \be_N}C]\prod_{k = 1}^N\varphi(\be_k)[\be_k\Rightcircle C],
\end{equation}
where the sum is taken over all collections $(\be_1,\be_2,\ldots \be_N)$of nonintersecting chords of $C$.
\end{lemma}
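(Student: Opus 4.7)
The plan is a direct unwinding of definitions. By Proposition~\ref{phi}, the hypothesis $\ph^{\diag} = \operatorname{id}$ means $\ph(\al) = \al + \ph^{\un}(\al)$ on every arrow, with $\ph^{\un}(\al)$ a $\kk$-linear combination of paths of length $\geq 2$ from $s(\al)$ to $t(\al)$. For any cycle $C' = \al'_1 \cdots \al'_{n'}$ appearing in $W$, one first expands
\begin{equation*}
\ph(C') \;=\; \prod_{i=1}^{n'}\bigl(\al'_i + \ph^{\un}(\al'_i)\bigr) \;=\; \sum_{S\subseteq\{1,\ldots,n'\}}\;\prod_{i\notin S}\al'_i\cdot\prod_{i\in S}\ph^{\un}(\al'_i),
\end{equation*}
the products being ordered concatenation in the path algebra. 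Expanding each $\ph^{\un}(\al'_i) = \sum_P \ph(\al'_i)[P]\,P$ further realizes $\ph(C')$ as a weighted sum over pairs $\bigl(S,(P_i)_{i\in S}\bigr)$ of weight $\prod_{i\in S}\ph(\al'_i)[P_i]$; the associated monomial is obtained from $C'$ by substituting $P_i$ for $\al'_i$ at every $i\in S$.

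The next step is to identify, for a fixed cycle $C = \al_1\cdots\al_n$, those pairs whose associated monomial is cyclically equal to $C$, and to put these pairs in bijection with collections $(\be_1,\ldots,\be_N)$ of nonintersecting chords of $C$ satisfying $C' = \cut_{\be_1\cdots\be_N}C$. Given such a pair, each arrow $\al'_i$ with $i \in S$ becomes a chord $\be_k$ of $C$: both of its endpoints lie on $C$, because the inserted path $P_i$ bridges precisely between those endpoints. The un-substituted arrows $\al'_i$ with $i\notin S$ then tile the complement of the chord arcs in $C$, so non-intersection of the chords is forced by the requirement that the pieces glue into one cyclic word. Conversely, any nonintersecting chord collection directly reconstructs $C' = \cut_{\be_1\cdots\be_N}C$ and the inserted paths $P_{i_k} = \be_k\Rightcircle C$; the standing assumption that arrow spaces of $Q$ are at most one-dimensional ensures this reconstruction is canonical.

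Summing the resulting weights $\prod_k \ph(\be_k)[\be_k\Rightcircle C]$ over all nonintersecting chord collections produces $\ph(C')[C]$, and multiplying by $W[C']$ and summing over cyclic classes $[C']$ yields the claimed identity. The most delicate point is the cyclic bookkeeping: one must verify that the bijection above descends to cyclic equivalence classes and that no spurious multiplicities appear from accidental cyclic symmetries of $C$ or $C'$. This is a routine but careful check, using the convention that cycles are considered up to cyclic shift and the observation that distinct chord data $(\be_k,i_k,j_k)$ produce distinct substitutions when reassembled into $C'$.
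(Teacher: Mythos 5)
The paper does not actually supply a proof of this lemma (it is asserted to be "a direct consequence of constructions"), so there is nothing to compare against; your direct unwinding of definitions is the intended argument, and the overall plan — expand $\ph(C') = \prod_i(\al'_i + \ph^{\un}(\al'_i))$, match each surviving term with a nonintersecting chord collection, and sum — is the right one.

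The one place you should be more careful is exactly the point you flag and then wave off. The paper's convention is that $U[P]$ denotes the coefficient of the \emph{specific path} $P$ (with its fixed initial vertex) in the element $U \in R\langle\langle Q\rangle\rangle$, not the coefficient of the cyclic equivalence class of $P$. Read this way, the statement is a genuinely path-level identity: each chord $(\be_k, i_k, j_k)$ carries its index data, each chord collection $\mathcal{B}$ produces one specific path $\cut_{\mathcal B}C$, and the bijection is between (a) expansions of that specific path to the specific word $C$ and (b) that chord collection. With this reading there is nothing to "descend to cyclic equivalence classes," and no spurious multiplicities arise: chord collections of $C$ that are conjugate under a cyclic symmetry of $C$ produce distinct paths $\cut_{\mathcal B}C$, so their contributions $W[\cut_{\mathcal B}C]$ are coefficients of distinct paths in $W$ and add correctly. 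By contrast, if one organizes the right-hand side as you do — "multiplying by $W[C']$ and summing over cyclic classes $[C']$" — then a cycle $C$ with a nontrivial cyclic symmetry (the paper does apply the lemma to such $C$, e.g.\ powers of chordless cycles) would appear to be overcounted, and the claim that "distinct chord data produce distinct substitutions when reassembled into $C'$" is precisely what fails in that framing. So rather than trying to verify that the bijection "descends," reformulate the middle step in path language and the delicate check disappears. A small secondary point: the one-dimensionality of arrow spaces is not really what makes the reconstruction canonical here — the chord datum already records the arrow, so the inverse map is determined regardless; the hypothesis is in force for the cutting operations in general, but it is not doing work in this argument.
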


It is easy to generalize this Lemma to arbitrary right-equivalences with non-trivial diagonal part. In particular, it follows that the effect of any right-equivalence $\ph$ on a coefficient of a chordless cycle in a potential is determined by diagonal part of $\ph$. This observation is used to show equivariance of the diagram \ref{equivariant1}.


\section{Spaces of primitive potentials for quivers $Q_{\TT,m}$}\label{primpotsec} In Section~\ref{clusterstr} we recalled a class of quivers $Q_{\TT,m}$ that describes cluster structure on the moduli spaces of framed local systems on a surface discovered by Fock and Goncharov \cite{FG03}. In this section we introduce a notion of a ``primitive'' potential on a quiver and study their equivalence-classes. By results of \cite{DWZ07} (and for categorical counterpart by \cite{KY09}) it is enough to consider a single quiver in its mutation class, since right-equivalence is a relation respected by mutations. Hence without loss of generality we can asssume that ideal triangulation $\TT$ is fixed and satisfies assumptions \ref{triangulationcond} (see Remarks in the end of Section~\ref{clusterstr}). To shorten notations we sometimes  write $Q$ instead of $Q_{\TT,m}$ throughout this section.

\subsection{Three bijections on the arrow set of $Q_{\TT,m}$} It will be useful to introduce three bijections $l,r,f$ (from ``left'', ``right'' and ``forward'') to describe various manipulations with elements of path algebras $R\langle\langle Q_{\TT,m}\rangle\rangle$ in this paper.

Recall that $Q_{\TT,m}$ can be naturally embedded in $\SSS$ and its complement is a union of regions which are contractible disks. Boundary of any such region is an oriented cycle of $Q$, hence every region can be painted black (resp. white) if the boundary cycle is oriented counterclockwise (resp. clockwise). Any arrow $\al \in Q_1$ in the arrow set of the quiver belongs to a unique black and a unique white region. Two bijections $l,r: Q_1\rightarrow Q_1$ map $\al$ to its successor in the corresponding black and white regions. Note that every black region is, in fact, a triangle. In particular, $l^2(\al)l(\al)\al$ is a cycle representing the boundary of the black triangle through arrow $\al$.

\begin{figure}
	\centering
	\begin{minipage}[b]{.5\textwidth}
		\centering
		\captionsetup{width=1\textwidth}
		\includegraphics{Lcycles_rev.pdf}
		\captionof{figure}{Cycles $L^{(k)}_p,\ k =1,2,3$}
		\label{Lcycles}
	\end{minipage}%
	\begin{minipage}[b]{.5\textwidth}
		\centering
		\captionsetup{width=1\textwidth}
		\includegraphics[width=1\textwidth]{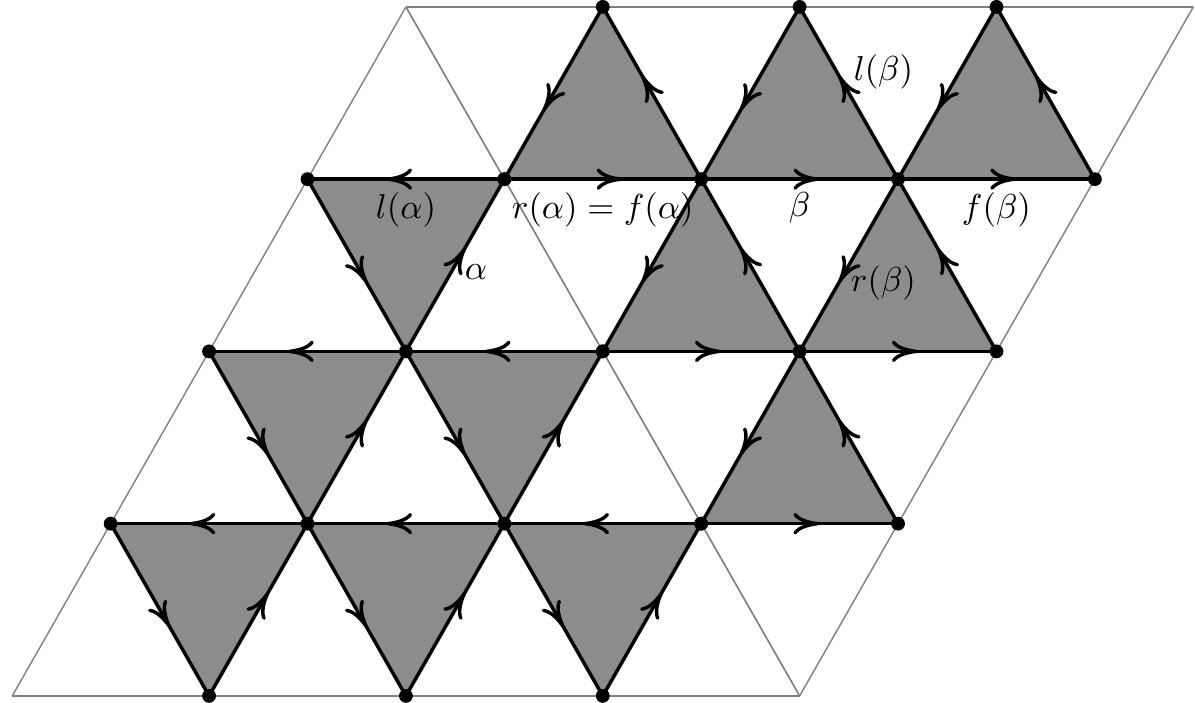}
		\captionof{figure}{Example of maps $l,r$ and $f$}
		\label{lrf_maps}
	\end{minipage}
\end{figure}

To define ``forward'' bijection $f$ we introduce an important class of cycles on quivers~$Q_{\TT,m}$. Choose one of the marked points $p\in \SSS$ and a number $1\leq k\leq m$. There are $m$ cycles $L_p^{(k)}$ in $Q$, geometrically they form $m$ disjoint clockwise paths around $p$. To describe them explicitly let $\tau_1,\ldots \tau_{\val(p)}$ denote the collection of all triangles of $\TT$ adjacent to $p$ written in their clockwise order ($\val(p)$ stands for valence of $p$ in the triangulation). As explained in Section~\ref{clusterstr} there is a standard subquiver embedded in every triangle of $\TT$. This subquiver consists of three families of arrows parallel to every side of a triangle. In particular, each of the triangles $\tau_1,\ldots \tau_{\val(p)}$ contains $m$ paths parallel to the opposite side of $p$. Enumerate these paths from $1$ to $m$ starting from the puncture. Then it is easy to see that $k$-th paths for triangles around the puncture are composable and form a clockwise cycle of length $k\val(p)$. This cycle is denoted $L_p^{(k)}$ (Fig. \ref{Lcycles}). Properties of these cycles are listed below.
\begin{lemma}\label{chordless_classification}
\begin{enumerate}[label=(\alph*)]
\item Cycles $L_p^{(k)}$ are chordless;
\item $L_p^{(1)}$  coincides with the boundary of white region containing $p$;
\item Any arrow $\al$ of $Q_{\TT,m}$ belongs to a unique cycle of the form $L_p^{(k)}$.\end{enumerate}
\end{lemma}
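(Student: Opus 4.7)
The plan is to reduce all three statements to a local combinatorial analysis on a single copy of $\Delta$. The key structural observation is that every arrow of $Q_{\TT,m}$ lives in a unique ideal triangle $\tau\in\TT$ and appears as an edge of a unique downward triangle of the standard subdivision of $\Delta$; in particular, no arrow bridges two different triangles of $\TT$, since arrows come only from the subquivers glued on individual copies of $\Delta$. I therefore fix an ideal triangle $\tau$ adjacent to $p$ and identify $\tau$ with $\Delta$ so that $p$ becomes the corner $(m+1,0,0)$, with the opposite side being $\{a=0\}$.

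For part (a), the vertices of $L_p^{(k)}$ that lie in $\tau$ are the collinear lattice points $(m+1-k,\ k-j,\ j)$ for $j=0,\ldots,k$. Two such points are adjacent in the triangulated subdivision of $\Delta$ if and only if they are consecutive in $j$, and in that case the connecting edge lies in a single downward triangle and carries exactly one arrow of $Q_{\TT,m}$ — the cycle arrow itself. The subquiver on $\Delta$ contains no double edges, so no reverse arrow exists. Since any chord would have to connect two vertices of $L_p^{(k)}$ in the same $\tau$, this excludes all chords, proving chordlessness. For part (b), specializing to $k=1$ shows that the arrow of $L_p^{(1)}$ inside $\tau$ connects the two subquiver vertices $(m,1,0)$ and $(m,0,1)$ nearest to $p$. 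Collecting these arrows over all triangles of $\TT$ incident to $p$ produces precisely the boundary of the connected component of $\SSS\setminus Q_{\TT,m}$ that contains $p$, and since $L_p^{(1)}$ is oriented clockwise by construction, this region is white.

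For part (c), each arrow of $Q_{\TT,m}$ is an edge of a unique downward triangle of the subdivision of some $\tau$; the three edges of that downward triangle have one of the three coordinates fixed, making them parallel to the three respective sides of $\tau$. Hence a given arrow is parallel to exactly one side, which determines both the opposite marked point $p$ and the value $k\in\{1,\ldots,m\}$ equal to the distance of that fixed-coordinate line from $p$. The arrow therefore lies on $L_p^{(k)}$ and on no other cycle of the form $L_{p'}^{(k')}$.

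The one place where care is required is matching orientations: the counterclockwise orientation rule on downward triangles of the subdivision of $\Delta$ has to be reconciled with the claim that $L_p^{(k)}$ is oriented clockwise around $p$ on $\SSS$. This reduces to a single explicit computation of the three arrows of one downward triangle in barycentric coordinates, after which all three parts follow from elementary lattice bookkeeping.
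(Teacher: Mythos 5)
Your proof follows the same local combinatorial route that the paper's terse proof alludes to (restrict to a single copy of $\Delta$ and read off the lattice geometry), and the chord analysis, the orientation discussion for (b), and the "parallel to a unique side" argument for (c) are all sound. However, you have silently used the non-degeneracy hypothesis~(\ref{triangulationcond}) — that no ideal arc has coinciding endpoints — and this is in fact the one thing the paper's proof of (a) explicitly cites. The identification of $\tau$ with $\Delta$ "so that $p$ becomes the corner $(m+1,0,0)$" is only unambiguous, and the claim that the vertices of $L_p^{(k)}$ inside $\tau$ are the single collinear segment $(m+1-k,\,k-j,\,j)$ is only true, when $p$ occurs exactly once as a corner of $\tau$. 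If an arc were a loop at $p$, then $\tau$ would have $p$ at two corners, $L_p^{(k)}$ would re-enter $\tau$ twice, its vertex set inside $\tau$ would be the union of two segments parallel to two different sides, and for suitable $k$ there genuinely are arrows of the $\Delta$-subquiver between those two segments — i.e., real chords, so (a) would fail. You should therefore state up front that (\ref{triangulationcond}) guarantees $p$ is a single corner of each adjacent ideal triangle, since that is precisely the hypothesis doing the work; a one-sentence remark that a non-adjacent triangle contains no vertices of $L_p^{(k)}$ at all (so no chord can live there) would also close the remaining small loose end in the case analysis.
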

\begin{proof} Statement (a) follows from the fact, that triangulation $T$ has no ideal arcs with coinciding end-points (see~\ref{triangulationcond}). Other statements are immediate from the construction.
\end{proof}

The last part of this lemma allows us to define the third bijection $f: Q_1\rightarrow Q_1$. For any arrow $\al$, by definition $f(\al)$ is the successive arrow in the unique cycle $L_p^{(k)}$ containing $\al$. With this definition, for instance, we can write:
$$
L^{(1)}_p = f^{\val (p)}(\al)\ldots f^2(\al)f(\al)\al
$$
for some arrow $\al$.

\smallskip
Note that boundaries of black and white regions are chordless cycles in $Q$. In the next proposition we show that together with cycles $L_p^{(k)}$ this exhausts the set of all chordless cycles of quivers $Q_{\TT,m}$.
\begin{proposition}\label{chordless}
Let $\TT$ be an ideal triangulation of a marked surface $\SSS$ satisfying (\ref{triangulationcond}). Then the set of chordless cycles of $Q_{\TT,m}$ is the union of the following collections:
\begin{enumerate}[label=(\roman*)]
\item Boundary cycles of black regions (oriented counterclockwise);
\item Boundary cycles of white regions (oriented clockwise);
\item Cycles $L^{(k)}_p$, where $p$ is a puncture of $\SSS$ and $k = 1,2,\ldots m$ (oriented clockwise).
\end{enumerate}
The only intersection between these collections of cycles $L_p^{(1)}$.
\end{proposition}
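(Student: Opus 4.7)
The plan is to organize the argument around the three bijections $l,r,f$ on arrows introduced at the start of Section~\ref{primpotsec}. For the ``if'' direction, showing each listed cycle is chordless, type~(iii) is Lemma~\ref{chordless_classification}(a); for types~(i) and~(ii), a would-be chord would be an arrow of $Q_{\TT,m}$ with source and target both on the region boundary but not consecutive there, and a short combinatorial check using that arrows change exactly two of the three barycentric coordinates by $\pm 1$ inside a single triangle $\Delta$ of $\TT$ rules this out for black triangles, interior white triangles, merged white quadrilaterals across a $\TT$-edge, and the white region around a puncture.

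For the ``only if'' direction, the key observation is that for every incoming arrow $\al$ at a vertex~$v$, the set $\{l(\al),r(\al),f(\al)\}$ exhausts all outgoing arrows at $v$: at interior vertices of the triangulation these give three distinct arrows, while at vertices on an edge of $\TT$ the coincidence $r(\al)=f(\al)$ leaves exactly two. So every cycle $C=\al_1\cdots\al_n$ admits a labelling of each step by one of $l,r,f$. I would then prove that for $C$ chordless the labelling must be \emph{constant} along $C$: a pure $l$-labelled cycle closes up as a black triangle of length~$3$ (so cannot occur for $n\geq 4$), a pure $r$-labelled cycle traces the boundary of a single white region giving type~(ii), and a pure $f$-labelled cycle traces some $L^{(k)}_p$ giving type~(iii).

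Each consistency step should be verified by exhibiting a chord. If $\al_{i+1}=l(\al_i)$ but $\al_{i+2}\neq l(\al_{i+1})$, the third arrow $\gamma$ of the black triangle containing $\al_i,\al_{i+1}$ runs from $t(\al_{i+1})$ to $s(\al_i)$; in a simple cycle of length $\geq 4$, $\gamma$ differs from every $\al_k$ and hence is a chord. The same ``third-arrow-of-a-small-triangle'' argument handles any deviation from $r$ at a Type~A vertex whose adjacent white region is a length-$3$ triangle, and symmetrically for $f$-deviations within a single $\Delta$. The principal obstacle, which I would address last, is the case of deviations near a white region of length~$4$ (merged quadrilateral across a $\TT$-edge) or length $\val(p)$ (around a puncture), where no single closing arrow is visible; the plan is to track the cycle across the $\TT$-edge and use that at Type~B vertices $r=f$, so that an $r/f$-mismatch at a Type~A vertex in one $\Delta$ always forces an arrow parallel to one side of $\Delta$ to connect two non-consecutive vertices of $C$, producing the chord. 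Once these cases are handled, the classification follows from assembling the constant-label conclusions together with the remark that $L^{(1)}_p$ equals the boundary of the white region around~$p$, accounting for the stated overlap between~(ii) and~(iii).
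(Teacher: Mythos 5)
Your overall strategy---exploit the bijections $l,r,f$ on arrows, exhibit chords, and classify---is the same as the paper's, and labelling the steps of the cycle is a reasonable reformulation of it. But two parts of your plan do not hold up as stated. The remark ``and symmetrically for $f$-deviations within a single $\Delta$'' has no content: an $f$-step inside a triangle of $\TT$ runs as a straight segment parallel to one side, and there is no short closing arrow analogous to the third side of a black or white triangle. The paper's logic is genuinely asymmetric: one only ever needs to rule out $l$-steps and $r$-steps (after discarding the black and white boundary cycles at the outset), and $\al_{i-1}=f(\al_i)$ is what one is left with; an $f$-step is never the thing that produces a chord. Your ``constancy of labelling'' framing obscures this and suggests a symmetry that is not there.

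The more substantive gap is the case you defer as the ``principal obstacle.'' When $s(\al_i)$ is an edge vertex and $t(\al_i)$ is not, the white region through $\al_i$ is a length-$4$ quadrilateral straddling an arc of $\TT$. Supposing $\al_{i-1}=r(\al_i)$, the closing arrow $r^{-1}(\al_i)=r^{3}(\al_i)$ starts at the interior vertex of the \emph{adjacent} triangle, which is not yet known to be a vertex of $C$, so it is not immediately a chord. The paper's argument is necessarily a two-step chain: first observe that $t(\al_{i-1})$ is an edge vertex (hence out-degree $2$, and $l(\al_{i-1})$ is excluded by your own black-triangle argument), forcing $\al_{i-2}=r^{2}(\al_i)$; only then is $(r^{3}(\al_i),\,i{+}1,\,i{-}3)$ a chord, unless $n=4$, in which case $C$ is exactly the excluded white quadrilateral. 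Your one-sentence sketch (``track the cycle across the $\TT$-edge \dots'') points in the right direction but does not contain this chain, and this chain is the whole content of the case. A smaller point: you invoke simplicity of $C$ (``in a simple cycle of length $\geq 4$, $\gamma$ differs from every $\al_k$''), but simplicity is a consequence of chordlessness that has to be argued---a repeated vertex itself furnishes a chord---and in any event the chord definition does not require $\gamma$ to be distinct from the arrows of $C$, so that clause is both unjustified and unneeded.
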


\begin{figure}
	\centering
	\begin{minipage}[b]{.5\textwidth}
		\centering
		\captionsetup{width=1.2\textwidth}
		\includegraphics{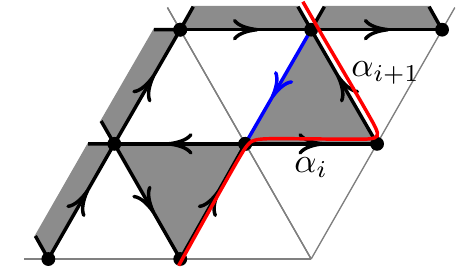}
		\captionof{figure}{Chord for $C = ...l(\al_i)\al_i...$}
		\label{fchord}
	\end{minipage}%
	\begin{minipage}[b]{.5\textwidth}
		\centering
		\captionsetup{width=1.2\textwidth}
		\includegraphics{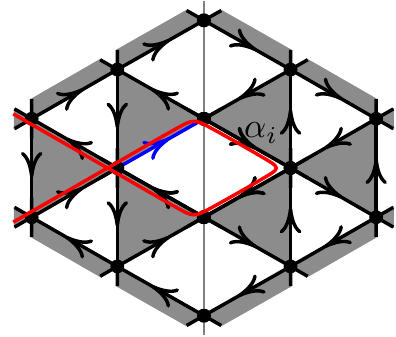}
		\captionof{figure}{Chord for $C = ...r^2(\al_i)r(\al_i)\al_i...$}
		\label{gchord}
	\end{minipage}
\end{figure}

\begin{proof} Take a chordless cycle $C = \al_1\al_2\ldots \al_n$ of $Q$ that is not a boundary of a black or white region. We need to show that $C$ has to be of form $L^{(k)}_p$ for some puncture $p$ and $1\leq k \leq m$. The key observation for the proof is that $\al_{i-1}\neq l(\al_{i}) \ \forall i \in \ZZ/ n\ZZ$. Indeed, otherwise the cycle would have a chord $(f^{-1}(\al_i), i-2, i+1)$ formed by two sides of a black triangle (see Fig. \ref{fchord}). Now consider any arrow $\al_i$, we claim that $\al_{i-1} = f(\al_i)$, this will be clearly enough to finish the proof of the proposition.

A vertex of $Q_{\TT,m}$ that belongs to two triangles of $\TT$ is called \emph{edge vertex}. We consider different cases for whether the source and the target of $\al_i$ are edge vertices:

\begin{enumerate}
\item If both $s(\al_i)$ and $t(\al_i)$ are not edge vertices, then $(r^{-1}(\al_i), i-2, i+1)$ would be a chord unless $\al_{i-1} = f(\al_i)$ (cycle $C$ includes two sides of a white triangle); hence $\al_{i-1} = f(\al_i)$ necessarily.

\item If $s(\al_i)$ is an edge vertex and $t(\al_i)$ is not, then $\al_{i-1} \neq r(\al_{i})$. Otherwise $C$ would contain a fragment (see Fig \ref{gchord}):
$$\ldots r^2(\al_i)r(\al_i)\al_i\ldots$$
If this was the case, $(r^{-1}(\al_i), i-3, i+1)$ is a chord or $d = 4$. For $d = 4$ we get $C = r^3(\al_i)r^2(\al_i)r(\al_i)\al_i$, which contradicts the initial assumption that $C$ is not a boundary of a white fragment;

\item If $t(\al_i)$ is an edge vertex then $\al_{i-1} =  f(\al_i) = r(\al_i)$ and there is nothing left to prove.
\end{enumerate}

We have shown that  $\al_{i-1} = f(\al_i) \forall i\in \ZZ/n\ZZ,$ it follows that $C$ has form $\left(L^{(k)}_p\right)^N$ for some power $N>0$. A chordless cycle cannot pass through the same vertex twice: if $s(\al_i) = s(\al_j), i\neq j$ then $(\al_j, i, j)$ is a chord of $C$. Hence $N = 1$ and the proposition is proven.
\end{proof}

\subsection{Right-equivalence classes of primitive potentials on $Q_{\TT,m}$}\label{primdefsec} The following definition is general and works for any quiver, not necessarily $Q_{\TT,m}$.

\begin{definition}\label{primdef}
\begin{enumerate}[label=(\roman*)]
  \item A potential $W$ on a quiver $W$ is \textbf{primitive} if it is a linear combination of chordless cycles and every chordless cycle appears in $W$ with nonzero coefficient.
  \item \textbf{Primitive part} of a potential $W$ on $Q$ is the projection of $W$ to the subspace R$\langle\langle Q\rangle\rangle_{\operatorname{prim}}\subset R\langle\langle Q\rangle\rangle_{\operatorname{cyc}}$ spanned by all chordless cycles.
  \item A potential on $Q$ is \textbf{generic} if its primitive part is a primitive potential.
\end{enumerate}
\end{definition}

\noindent Last part just says that in the expression of generic potential every chordless cycle appears with nonzero coefficient. For quivers $Q_{\TT,m}$ Proposition~\ref{chordless} makes this definition explicit.

Our goal in this subsection is to study the effect of right-equivalences on primitive potentials. It is easy to see, that the space spanned by chordless cycles is not preserved by arbitrary right-equivalences acting on $R\langle\langle Q\rangle\rangle$. However, if the space of primitive potentials is viewed as a quotient- rather than a sub-space, the equivariance can be restored. This is made explicit in Section~\ref{strgensec}.

Recall group $\GG$ of all right-equivalences of the quiver $Q_{\TT,m}$ and its semi-direct product presentation $\GG=\GG^{\un}\rtimes\GG^{\diag}$. Since all arrow spaces in $Q_{\TT,m}$ are one-dimensional $\GG^{\operatorname{diag}}$ is isomorphic to $(\kk^{\times})^{|Q_1|}$. Clearly, diagonal subgroup preserves the subspace of primitive potentials. Moreover, there is a group epimorphism:
\begin{equation}
\begin{split}
\GG \longrightarrow& \ \GG^{\operatorname{diag}}\\
\varphi = (\varphi^{\diag},\varphi^{\un}&) \mapsto \varphi^{\diag}
\end{split}
\end{equation}
\noindent where $(\varphi^{\diag},\varphi^{\un})$ is the decomposition from Proposition \ref{phi}. Using this and Lemma \ref{cutlemma} one sees that to determine the action of a right-equivalence $\varphi$ on a primitive part of a potential, it suffices to consider only its diagonal part $\varphi^{\diag}$.

This observation allows us to describe equivalence classes of primitive potentials modulo right-equivalences. In fact, to make this more precise we have to consider only right-equivalences that preserve the subspace $R\langle\langle Q\rangle\rangle_{\prim}$. As explained above to it is enough to consider part of the action through the quotient $\GG \twoheadrightarrow \GG^{\diag}\simeq(\kk^{\times})^{|Q_1|}$. Note however, that there may exist non-trivial right-equivalences inducing trivial action on the span of chordless cycles.
\medskip

It is convenient to state the answer in terms of a natural 2-dimensional CW complex associated to $Q_{\TT,m}$. We are going to recognize equivalence classes of primitive potentials as the second cohomology group of this complex.

Let $\mathcal{C}(\TT,m)$ be a CW complex whose 1-skeleton is identified with $Q_{\TT,m}$, and that has a $2$-cell attached along every chordless cycle. By definition, gluing maps for $2$-cells are consistent with cycle orientation given by quiver. That is, the chain differential for $\mathcal{C}(\TT,m)$ maps any $2$-cell to the sum of edges (=arrows) on the boundary.

Objects introduced above can be interpreted in terms of $1$- and $2$-cochains on $\mathcal{C}(\TT,m)$. Indeed, any primitive potential on $Q_{\TT,m}$ can be identified with a $2$-cocycle on $\mathcal{C}(\TT,m)$ with coefficients in $\kk^{\times}$. Elements in $(\kk^{\times})^{|Q_1|}$ naturally correspond to $1$-cochains. We have

\begin{proposition}\label{prim}
Space of primitive potentials on $Q_{\TT,m}$ modulo the action of the group of right-equivalences, preserving primitive potentials, is isomorphic to the second cohomology group of $\mathcal{C}(\TT,m)$.

The rank of this group is given by:
$$H^2(\mathcal{C}(\TT,m), \emph{\kk}^{\times})\simeq (\emph{\kk}^{\times})^{d(m-1) + 1}.$$
\end{proposition}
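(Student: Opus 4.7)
The plan is to translate the quotient into a cellular cochain computation on $\mathcal{C}(\TT,m)$ and then compute the relevant cohomology by pure topology. By the reduction recalled just above the proposition, any right-equivalence preserving the primitive subspace acts on it through its image in $\GG^{\diag}\simeq(\kk^{\times})^{|Q_1|}$. I will identify a primitive potential $W=\sum_{C}w_{C}\,C$ (sum over chordless cycles $C$) with a cellular $2$-cochain $w\in C^{2}(\mathcal{C}(\TT,m),\kk^{\times})$, where $\kk^{\times}$ is viewed as a multiplicative abelian group, and $\GG^{\diag}$ with the group of $1$-cochains $C^{1}(\mathcal{C}(\TT,m),\kk^{\times})$. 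Under these identifications, the scaling action $(c_{\al})\cdot w_{C}=\bigl(\prod_{\al\in C}c_{\al}\bigr)\,w_{C}$ is precisely multiplication by the coboundary $d^{1}(c_{\al})$. Since $\mathcal{C}(\TT,m)$ has no cells of dimension $\geq 3$, the orbit space is $C^{2}/\operatorname{Im}(d^{1})=H^{2}(\mathcal{C}(\TT,m),\kk^{\times})$, establishing the isomorphism in the first part of the proposition.

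For the rank, I would work with the subcomplex $\mathcal{C}_{0}\subset\mathcal{C}(\TT,m)$ obtained by discarding the $(m-1)d$ extra $2$-cells glued along the loops $L_{p}^{(k)}$ with $k\geq 2$. By Proposition~\ref{chordless}, the remaining $2$-cells of $\mathcal{C}_{0}$---boundaries of black regions, of non-puncture white regions, and the cycles $L_{p}^{(1)}$---are in bijection with the connected components of $\SSS\setminus Q_{\TT,m}$, each of which is a topological disk. Thus $\mathcal{C}_{0}$ gives a CW-decomposition of $\SSS$, so $H_{1}(\mathcal{C}_{0},\ZZ)=\ZZ^{2g}$ and $H_{2}(\mathcal{C}_{0},\ZZ)=\ZZ$. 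Each extra cycle $L_{p}^{(k)}$ with $k\geq 2$ encircles the puncture $p$ and bounds a topological disk in $\SSS$ made of the $2$-cells of $\mathcal{C}_{0}$ enclosed by it, so it is null-homologous in $\mathcal{C}_{0}$. Attaching a $2$-cell along a null-homologous loop leaves $H_{1}$ unchanged and adds one free generator to $H_{2}$, so after attaching all $(m-1)d$ extra cells one obtains $H_{1}(\mathcal{C}(\TT,m),\ZZ)=\ZZ^{2g}$ and $H_{2}(\mathcal{C}(\TT,m),\ZZ)=\ZZ^{1+(m-1)d}$. Since $H_{1}$ is free, $\Ext(H_{1},\kk^{\times})=0$, and the universal coefficient theorem yields $H^{2}(\mathcal{C}(\TT,m),\kk^{\times})\simeq\Hom(H_{2}(\mathcal{C}(\TT,m),\ZZ),\kk^{\times})\simeq(\kk^{\times})^{d(m-1)+1}$.

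The main delicate step will be the geometric identification $\mathcal{C}_{0}\simeq\SSS$: one must check, using Proposition~\ref{chordless} and the embedding $Q_{\TT,m}\hookrightarrow\SSS$, that every region of $\SSS\setminus Q_{\TT,m}$ is bounded by exactly one chordless cycle of type (i) or (ii) (including the cycles $L_{p}^{(1)}$), with no double counting. The Euler-characteristic identity $\chi(\mathcal{C}_{0})=2-2g$, computable directly from the cell counts in the Fock--Goncharov construction of $Q_{\TT,m}$, provides a useful sanity check.
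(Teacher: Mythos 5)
Your proposal is correct and follows essentially the same route as the paper: identify primitive potentials with $2$-cochains and the diagonal rescalings with $1$-cochains acting by coboundary, then observe that removing the $L_p^{(k)}$ ($k\geq2$) cells yields a CW-decomposition of $\SSS$ and that reattaching each such cell raises $\operatorname{rk}H_2$ by one. Your write-up merely makes explicit the homological bookkeeping (boundary already null-homologous, UCT with $H_1$ free) that the paper compresses into ``gluing back each of these $2$-cells adds one to the rank,'' and usefully flags the tacit step that the $2$-cells of $\mathcal{C}_0$ are exactly the complementary disks of $Q_{\TT,m}\subset\SSS$.
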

\begin{proof} As mentioned above, the action of a right-equivalence preserving primitive potentials depends only on its diagonal part (see Prop. \ref{phi}). So, the identification of the quotient with the second cohomology is immediate from construction and preceding discussion. To find rank of $H^2(\mathcal{C}(\TT,m), \kk^{\times})$ we apply simple topological considerations.

Namely, note that the subcomplex of $\mathcal{C}(\TT,m)$ obtained by removing $2$-cells associated to cycles $L^{(k)}_p$ with $2\leq k\leq m$ is isomorphic to $\SSS$. Gluing back each of these $2$-cells adds one to the rank of the whole CW complex. There are $d$ punctures and we need to add $(m-1)$ disks for each of them. The result follows.
\end{proof}

\begin{corollary}\label{primcor}
There are $d(m-1) +1$ functions on the space of generic potentials invariant under the action of group of right-equivalences.
\end{corollary}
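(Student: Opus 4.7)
The plan is to deduce the corollary as an essentially formal consequence of Proposition~\ref{prim} combined with the Corollary stated just after Theorem~\ref{mn1}, so the real content is already built in and there is not much obstacle here.

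First, I would consider the composition
\[
\operatorname{Pot}_{\operatorname{gen}}(Q_{\TT,m}) \xrightarrow{\ \pi_{\prim}\ } R\langle\langle Q_{\TT,m}\rangle\rangle_{\prim} \twoheadrightarrow R\langle\langle Q_{\TT,m}\rangle\rangle_{\prim}\big/\GG_{\prim},
\]
where $\GG_{\prim}\subset \GG$ denotes the subgroup of right-equivalences preserving the subspace of primitive potentials. The first arrow is the projection onto the span of chordless cycles defined above, and the second passes to the quotient identified with $H^2(\mathcal{C}(\TT,m),\kk^\times)$ in Proposition~\ref{prim}.

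Next, I would invoke the Corollary to Theorem~\ref{mn1}, which asserts precisely that if two generic potentials $W$ and $W'$ are right-equivalent under the full group $\GG$, then their primitive parts $\pi_{\prim}(W)$ and $\pi_{\prim}(W')$ lie in the same orbit of $\GG_{\prim}$ on the space of primitive potentials. Therefore the displayed composition is constant on $\GG$-orbits, and descends to a well-defined map
\[
\operatorname{Pot}_{\operatorname{gen}}(Q_{\TT,m})\big/\GG \;\longrightarrow\; R\langle\langle Q_{\TT,m}\rangle\rangle_{\prim}\big/\GG_{\prim} \;\simeq\; (\kk^\times)^{d(m-1)+1},
\]
where the final identification is the content of Proposition~\ref{prim}.

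Finally, I would take the $d(m-1)+1$ coordinate functions on the torus $(\kk^\times)^{d(m-1)+1}$ and pull them back along this map. By construction they yield $d(m-1)+1$ $\kk^\times$-valued functions on the space of generic potentials that are invariant under the action of the group of right-equivalences, proving the claim. The main step, namely the invariance of the primitive part under arbitrary (not merely primitive-preserving) right-equivalences, is already the substance of Theorem~\ref{mn1}(b); given that input, no further obstacle remains.
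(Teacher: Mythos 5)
Your argument is correct and is essentially the same as the paper's: the paper's one-line proof invokes Proposition~\ref{prim} together with the $\GG^{\diag}$-equivariance of the projection $\pi_{\prim}$ (diagram~\ref{equivariant1}), which is exactly the descent argument you spell out, and the $d(m-1)+1$ coordinate functions you pull back are the ones the paper constructs explicitly as $h$ and the $h_p$'s in~(\ref{hinv})--(\ref{hpinv}). The only point worth being slightly careful about is the phrase ``lie in the same $\GG_{\prim}$-orbit'': the Corollary to Theorem~\ref{mn1} literally says ``right-equivalent,'' but the second sentence of that Corollary and the discussion surrounding Lemma~\ref{cutlemma} make clear that the right-equivalence in question can be taken to be the diagonal part $\varphi^{\diag}\in\GG^{\diag}\subset\GG_{\prim}$, so your reading is justified.
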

The proof is an easy consequence of the previous proposition. For explicit construction of functions see definitions of $h_p(W)$ and $h(W)$ \ref{hinv},\ref{hpinv} and discussion of \ref{equivariant1}.

\section{Space of generic potentials on $Q_{\TT,2}$.}

This section is entirely dedicated to the study of potentials on quivers $Q_{\TT,2}$ associated to $PGL_3$ and $SL_3$ local systems on marked surface $\SSS$. Some of the constructions can be generalized to quivers $Q_{\TT,m}$ with arbitrary $m>1$ but usually we do not spell out explicit details.

We introduce a notion of a~\emph{strongly generic} potential defined as a non-vanishing condition for a system of polynomial equations for coefficients of cycles (Definition~\ref{strgendef}). This property is preserved by all right-equivalences and thus one can study the quotient of the space of strongly generic potentials on $Q_{\TT,2}$ modulo the action of the group of right-equivalences. We denote this quotient $\Pot_{Q_{\TT,2}}$.

The main result of this section is Theorem~\ref{mn3} that gives full description of right-equivalence classes of strongly generic potentials on $Q_{\TT,2}$ and in particular shows that $\Pot_{Q_{\TT,2}}$ is finite dimensional. We copy the statement given in the introduction for the convenience of the reader.

\begin{theorem}\label{mn3} The quotient of the space of strongly generic potentials on $Q_{\TT,2}$ modulo the action of the group of right-equivalences has dimension $d+2$.

Its natural projection to the space of equivalence classes of primitive potentials is a fibration over the image with fibers isomorphic to an affine line $\mathbb{A}_{\emph{\kk}}^1$.
\end{theorem}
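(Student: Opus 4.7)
The plan is to decompose $\Pot_{Q_{\TT,2}}$ along the natural projection to the space of right-equivalence classes of primitive potentials. By part (b) of Theorem~\ref{mn1} the assignment $W \mapsto \pi_{\prim}(W)$ intertwines right-equivalence, so it descends to a well-defined map of quotients whose image is the $(d+1)$-dimensional space computed in part (a) with $m=2$. Once I show that every fiber is an affine line $\mathbb{A}^1_{\kk}$, the dimension count $\dim \Pot_{Q_{\TT,2}} = (d+1) + 1 = d+2$ follows, together with the fibration structure.

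Fix a primitive representative $W_{\prim}$ and write a strongly generic potential in its preimage as $W = W_{\prim} + W'$, where $W'$ is supported on cycles having at least one chord; Proposition~\ref{chordless} ensures this exhausts the non-primitive cycles. Applying Lemma~\ref{cutlemma} to a chordless cycle $C$ yields $\varphi(W)[C] = W[C]$ for every $\varphi \in \GG^{\un}$, so unitriangular right-equivalences fix $W_{\prim}$ pointwise while reshuffling $W'$ via the chord-cutting formula. The stabilizer of $W_{\prim}$ inside $\GG^{\diag}$ acts only by scaling cycle coefficients and is exhausted once a representative of the primitive class is pinned down. Hence the fiber over $[W_{\prim}]$ reduces to the space of tails $W'$ modulo the $\GG^{\un}$-action.

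The technical heart is a normal-form reduction for $W'$. I would order the cycles of $Q_{\TT,2}$ by a carefully chosen length filtration, then inductively eliminate non-chordless cycles from shortest to longest: given such a cycle $C$ with coefficient $W[C] \neq 0$, pick a chord $\be$ for which $\cut_{\be} C$ is strictly shorter and appears in $W$ with nonzero coefficient, and solve Lemma~\ref{cutlemma} for $\varphi(\be)[\be \Rightcircle C]$ to cancel $W[C]$. The non-vanishing system (\ref{strgen}) defining strongly generic potentials is tailored exactly so that the pivot coefficients $W[\cut_{\be} C]$ survive at every step of this cascade, playing the role of pivots in a Gaussian elimination on cyclic paths. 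The claim is that after the cascade terminates exactly one obstruction class remains whose coefficient cannot be moved, because every chord reduction available to it feeds into cycles already eliminated.

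The main obstacle is the combinatorial organization of this elimination: specifying a length filtration along which every chord-cut strictly decreases, verifying that the strongly generic hypothesis preserves non-degeneracy of every pivot as the cascade progresses, and identifying the unique surviving obstruction. Once that is done, the residual coefficient transforms by an additive shift $\varphi(\be)[\be \Rightcircle C]$ with $\be \Rightcircle C$ a path of length at least $2$ carrying no sign or invertibility constraint, so the surviving parameter ranges over $\mathbb{A}^1_{\kk}$ rather than $\kk^\times$ or a point. Combining this affine-line fiber with the $(d+1)$-dimensional base from Theorem~\ref{mn1}(a) completes the proof.
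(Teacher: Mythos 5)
The high-level skeleton of your plan — project to primitive equivalence classes using Theorem~\ref{mn1}(b), treat the $(d+1)$-dimensional base as known, and study the fibers — matches the paper. But the fiber analysis has two substantial gaps.

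First, the proposed ``cascade'' of chord-cuts ordered by cycle length does not obviously terminate or stay controlled. In $Q_{\TT,2}$ there are cycles of arbitrarily large degree, and the paper makes this work only by building infinite compositions of elementary right-equivalences that stabilize degree by degree (properties (a), (b) preceding Proposition~\ref{straight}), together with a careful stratification of cycles into primitive / local / nonlocal and straight / non-straight. Moreover, your claim that the strongly generic condition~(\ref{strgen}) ``preserves non-degeneracy of every pivot as the cascade progresses'' misreads its role: in the paper's argument the condition is used at exactly one point, Lemma~\ref{removeL}, to guarantee that the aggregate coefficient $\left(v_p^{(2)}\right)^2 + (-1)^{\val p}v_p^{(1)}v_p^{(2)}$ of the surviving power of $L_p^{(2)}$ is nonzero. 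It is not a blanket nonvanishing of cut coefficients $W[\cut_\be C]$, and indeed Lemma~\ref{cutlemma} produces a sum over \emph{collections} of nonintersecting chords, not a single pivot equation you can invert.

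Second, and more importantly, your argument does not actually establish that the fiber is $\mathbb{A}^1$ rather than a point. You observe that the residual coefficient transforms by an additive shift $\varphi(\be)[\be\Rightcircle C]$ carrying no invertibility constraint — but if that shift can take arbitrary values, the coefficient could be set to zero, collapsing the fiber. What you need is the opposite: a $\bar{\GG}^{\un}$-invariant linear functional separating the residual classes. The paper supplies this explicitly as the functional $\Theta$ of Proposition~\ref{Theta}, whose invariance is verified cycle-by-cycle using the chordlessness of cuts (Lemmas~\ref{cutC}, \ref{cutP}) and the semi-direct product structure of $\bar{\GG}$ (Lemma~\ref{semidir}). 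Relatedly, your assertion that the $\GG^{\diag}$-stabilizer of $W_{\prim}$ ``is exhausted once a representative of the primitive class is pinned down'' is false when $g>0$: the stabilizer is identified with $1$-cocycles, and one still needs Lemma~\ref{diageq} to see that its action on coefficients of cycles in $\mathfrak{C}$ is trivial (via intersection indices with loops on $\SSS$). Without constructing $\Theta$ and handling the diagonal stabilizer, the fiber-is-$\mathbb{A}^1$ claim is not proved.
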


In the previous section we have shown that primitive part of a potential is an invariant of the action of group of right-equivalences $\GG$. We recall this statement in the beginning of this section and provide a set of coordinates on the quotient space of primitive potentials modulo equivalence relations. As the second part of the statement shows, knowing primitive part of a potential is not enough to reconstruct it up to right-equivalence.

The proof is done in a quite ``uneconomic'' way and requires construction of infinite compositions of right-equivalences (that are well-defined in the completed path algebra). For the first part of the theorem, we show that there is a finite collection of cycles in $Q_{\TT,2}$, such that any strongly generic potential is right-equivalent to a linear combination of cycles from that collection (Definition \ref{reduced}, Proposition \ref{reducepot}). Second part of the theorem is the refinement of this presentation: coefficients of chordless cycles are controlled by Proposition \ref{prim}, however, there is also a finite number of more complicated cycles. Thus, we need to identify further equivalences between them. It turns out that there is a single extra parameter that describes right-equivalence class of a strongly generic potential with a given primitive part.

\smallskip
To simplify parts of our exposition we introduce additional notation. Let $\al$ be and arrow of $Q$, and let $j_1,\ldots j_d$ be a sequence of letters $f,g$ and $h$. Then we by denote $\overleftarrow{j_d...j_1}(\al)$ the path of degree $d+1$ obtained by composing successive compositions of maps$j_i$:
$$
\overleftarrow{j_d...j_1}(\al)=j_d(j_{d-1}...j_1(\al))\ldots j_2(j_1(\al))j_1(\al)\ldots j_2(j_1(\al))j_1(\al)\al
$$
For example, consider an arrow $\al$ that belongs to some cycle $L_p^{(2)}$ of $Q_{\TT,2}$ then $E = \overleftarrow{rrr}(\al)$ is a cycle of degree $4$ that can be naturally associated to an edge of the triangulation $\TT$ (e.g. Fig.~\ref{divEfig}). For that reason we will also refer to $E$ as ``edge cycle''.

\subsection{Definition of strongly generic potentials.}\label{strgensec} Recall by Proposition \ref{prim} that the space of primitive potentials $R\langle\langle Q\rangle\rangle_{\operatorname{prim}}$ modulo diagonal right-equivalences is identified with second cohomology of the complex $\mathcal{C}(\TT,2)$. In case $m = 2$ this group is $\left(\kk^{\times}\right)^{d+1}$. Moreover, there is a natural $\GG^{\operatorname{diag}}$-equivariant projection (\emph{cf.} \ref{equivariant2}):

\begin{center}
\begin{equation}\label{equivariant1}
\begin{tikzcd}
\GG\arrow{r}{\rho}&\GG^{\diag} \\
R\langle\langle Q\rangle\rangle_{\operatorname{cyc}}\arrow[loop, distance = 45pt]  \arrow{r} & \arrow[loop, distance = 40pt]R\langle\langle Q\rangle\rangle_{\operatorname{prim}}
\end{tikzcd}
\end{equation}
\end{center}

\noindent The bottom arrow is just a projection on the subspace of primitive potentials by taking primitive part (see Definition \ref{primdef}).

\medskip
To define strongly generic potentials, we describe particular coordinates on $H^2(\mathcal{C}(\TT,m), \kk^{\times})$ using ratios of coefficients of primitive cycles. More precisely, we construct functions on the space of cochains $C^2(\mathcal{C}(\TT,m), \kk^{\times})$ that descend to the cohomology group. Recall notation $W[C]$ for the coefficient of cycle $C$ in potential $W$.

The first coordinate $h$ on the space of $2$-cochains is given by:
\begin{equation}\label{hinv}
h(W) =\left( \prod_{C \in \{\text{white cycles}\}} W[C]\right)\cdot\left(\prod_{C \in \{\text{black cycles}\}} W[C]\right)^{-1}
\end{equation}
\noindent Each of the remaining $d$ coordinates is naturally associated to each puncture $x_i\in\SSS, i\in 1,...d$. For every puncture $p$, the cycle $L^{(2)}_p$ splits $\SSS$ into two parts: a disk around a $p$ and its complement in $\SSS$. Consider a subcomplex of $\mathcal{C}(\TT,2)$ formed by a cell associated to $L^{(2)}_p$, and the complement to the disk around $p$ (this subcomplex is homotopy equivalent to surface $\SSS$). Define coordinate $h_p$ by:
\begin{equation}\label{hpinv}
h_p(W) =W[L^{(2)}_p]\cdot\left( \prod_{\left\{\substack{\text{white cycles} \\ \text{in the complement}}\right\}} W[C]\right)\cdot\left(\prod_{\left\{\substack{\text{black cycles} \\ \text{in the complement}}\right\}} W[C]\right)^{-1}
\end{equation}

It is immediate from definitions that functions $\{h, h_{x_1},\ldots h_{x_d}$ descend to the second cohomology group (in other words, these functions are $\GG^{\diag}$-invariant) and form there a full set of coordinates.

\begin{definition}\label{strgendef}
Generic potential $W$ on $Q_{\TT,2}$ is called \textbf{strongly generic} if for every marked point $x_i\in \SSS,\ 1\leq i\leq d$
\begin{equation}\label{strgen}
h(W) + (-1)^{\val(x_i)}h_{x_i}(W)\neq 0
\end{equation}
where $\val(p)$ denotes the valence of vertex $p$ in the ideal triangulation $\TT$.
\end{definition}

\begin{lemma}
Strongly generic potentials form a set stable under the action of the group of right-equivalences.
\end{lemma}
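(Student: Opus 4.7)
The plan is to verify that both conditions defining strong genericity (genericity of $W$ and the non-vanishing (\ref{strgen})) are preserved by every right-equivalence $\varphi \in \GG$. The key observation to establish first is that the coefficient of any chordless cycle in $W$ is affected by $\varphi$ only through its diagonal part. Using the semi-direct product decomposition $\GG = \GG^{\un} \rtimes \GG^{\diag}$, write $\varphi = \varphi^{\un} \circ \varphi^{\diag}$. Lemma \ref{cutlemma}, applied to a chordless cycle $C$ (for which the only admissible family of nonintersecting chords is the empty one), yields $\varphi^{\un}(W')[C] = W'[C]$ for any $W'$; combining this with the evident scaling of arrows by $\varphi^{\diag}$ gives
\[
\varphi(W)[C] \;=\; \Bigl(\prod_{\al \in C} \lambda_\al\Bigr)\, W[C],
\]
where the $\lambda_\al \in \kk^\times$ are the scalars defining $\varphi^{\diag}$.

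From this formula genericity is immediate: since $\varphi^{\diag}$ is an invertible $R$-bimodule map, all $\lambda_\al$ are nonzero, so $\varphi(W)[C] \neq 0$ whenever $W[C] \neq 0$. Equivalently, $\pi_{\prim}(\varphi(W)) = \varphi^{\diag}(\pi_{\prim}(W))$, and the right-hand side is a primitive potential whenever $\pi_{\prim}(W)$ is. In particular, if $W$ is generic then so is $\varphi(W)$.

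For the non-vanishing condition, I would invoke Proposition \ref{prim} and the discussion immediately preceding Definition \ref{strgendef}: the coordinate functions $h$ and $h_{x_i}$ from (\ref{hinv})--(\ref{hpinv}) descend to $H^2(\mathcal{C}(\TT,2), \kk^\times)$, i.e.\ they are $\GG^{\diag}$-invariant on the space of primitive potentials. Combining this with $\pi_{\prim}(\varphi(W)) = \varphi^{\diag}(\pi_{\prim}(W))$ gives
\[
h(\varphi(W)) \;=\; h\bigl(\pi_{\prim}(\varphi(W))\bigr) \;=\; h\bigl(\varphi^{\diag}(\pi_{\prim}(W))\bigr) \;=\; h(\pi_{\prim}(W)) \;=\; h(W),
\]
and an identical chain yields $h_{x_i}(\varphi(W)) = h_{x_i}(W)$ for every marked point. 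Hence
\[
h(\varphi(W)) + (-1)^{\val(x_i)} h_{x_i}(\varphi(W)) \;=\; h(W) + (-1)^{\val(x_i)} h_{x_i}(W),
\]
and the strong genericity condition (\ref{strgen}) for $W$ transfers verbatim to $\varphi(W)$. The only step requiring care is the precise form of the cutting identity for chordless cycles when $\varphi^{\diag}$ is nontrivial — i.e.\ confirming that the generalization of Lemma \ref{cutlemma} announced just after its statement specializes on chordless cycles to the scaling formula above. Once that is in hand the rest is a direct substitution, and I do not anticipate a substantive obstacle.
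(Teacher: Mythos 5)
Your argument is correct and follows the same route as the paper: reduce to the diagonal part $\GG^{\diag}$ (the paper cites this as already known from the preceding discussion of Lemma~\ref{cutlemma} and Proposition~\ref{prim}, whereas you spell out the chordless-cycle computation), then invoke the $\GG^{\diag}$-invariance of $h$ and $h_{x_i}$. Your version simply makes explicit what the paper's terse two-sentence proof leaves implicit.
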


\begin{proof}
Since the property of being generic is preserved by right-equivalences, and the condition (\ref{strgen}) is formulated in terms of primitive parts, it is enough to check the statement for the action of $\GG^{\diag}$. But then it is obvious, since coordinate functions $h$ and $h_p$ are invariant under its action.
\end{proof}

For the convenience of the exposition we fix a ``standard'' representative in $\GG^{\diag}$-orbit of a generic potential $W$. Let $W_{\operatorname{prim}}$  be the primitive part of $W$. By Proposition \ref{prim} after possibly rescaling arrow spaces we can assume that all chordless cycles, other than $L_p^{(k)}$, have coefficient one. Thus, $W_{\operatorname{prim}}$ is fully specified by nonzero numbers $v_p^{(k)}$ --- coefficients of cycles $L_p^{(k)}$ in $W$. In other words, the primitive part is given by:
\begin{equation}\label{prim_std}
W_{\operatorname{prim}}  = \sum_{\text{black cycles}} C_b + \sum_{\substack{\text{white cycles,}\\ \text{not } L^{(1)}_p}} C_w + \sum_{\substack{p\in{x_1,\ldots, x_d} \\ 1\leq k\leq m}}v_p^{(k)}L_p^{(k)}
\end{equation}

\noindent This presentation of the primitive part of a potential in its equivalence class is not unique, but it is convenient for our exposition. With this notations the condition~(\ref{strgen}) becomes:
\begin{equation}\label{strgen2}
v^{(1)}_{x_i} + (-1)^{\val(x_i)} v^{(2)}_{x_i} \neq 0
\end{equation}

\subsection {$\Pot_{Q_{\TT,2}}$ is finite dimensional.}
In this subsection we prove the first part of Theorem \ref{mn3}. This is done in many stages, where at every step we construct some right-equivalence transforming potential to a simpler form. We repeatedly use one fundamental idea, which was also used in \cite{GLS13} to prove analogous result in $A_1$-case. Say, we fix a set of ``bad'' cycles, and we want to prove that potential $W$ is right-equivalent to a potential that has no ``bad'' cycles in its expression. For this purpose we exhibit a sequence of right-equivalences $\Phi_n$, such that:

\begin{enumerate}[label=(\alph*)]
\item the smallest degree of a ``bad'' cycle entering $\Phi_n(W)$ with nonzero coefficient is at least $d = d(n)$, for some sequence of numbers $d(n)$ with $\lim_{n\rightarrow \infty}d(n) = \infty$.
\item $W \equiv   \Phi_n(W) \mod R\langle\langle Q\rangle\rangle_{\operatorname{cyc},{d - c}}$ where $c$ is some constant and subscript $d-c$ denotes corresponding graded component.
\end{enumerate}

\noindent If there exists such sequence, then by the second property,  $\lim_{n\rightarrow \infty}\Phi_n(W)$ is well defined in the completed path algebra, and by the first property the limit has no ``bad'' cycles.

\medskip
In what follows we will step by step choose an appropriate class of ``bad'' cycles and construct sequences of right-equivalences eliminating them. For this we need some definitions (see Figures \ref{Loc_cycle_ex} and \ref{Nonloc_cycle_ex}).

\begin{figure}
	\centering
	\begin{minipage}[b]{.5\textwidth}
		\centering
		\captionsetup{width=1.2\textwidth}
		\includegraphics{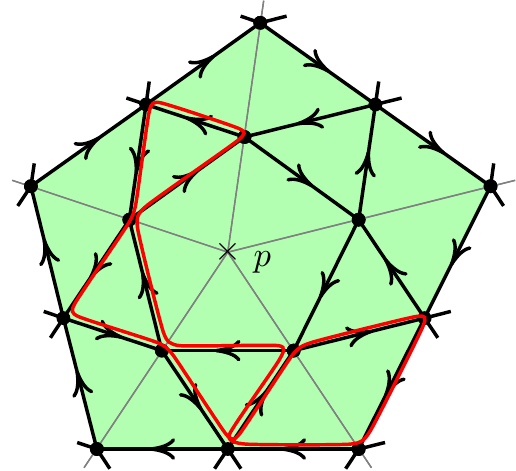}
		\captionof{figure}{$p$-patch $Q_p$ and a local cycle}
		\label{Loc_cycle_ex}
	\end{minipage}%
	\begin{minipage}[b]{.5\textwidth}
		\centering
		\captionsetup{width=1.2\textwidth}
		\includegraphics{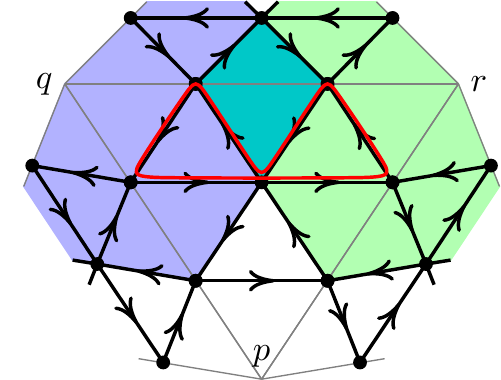}
		\captionof{figure}{Nonlocal cycle}
		\label{Nonloc_cycle_ex}
	\end{minipage}
\end{figure}

Recall that any arrow $\al$ of $Q_{\TT,m}$ belongs to a unique cycle of form $L_p^{(k)}$, in this case we say that $\al$ \emph{is associated to puncture} $p$. We also introduce a class of subquivers of $Q_{\TT,m}$ located around marked points of $\SSS$.

\begin{definition}\label{patch_def}
Let $p$ be a puncture of a marked surface $\SSS$. A $p$-\textbf{patch} is a subquiver $Q_p\subset Q_{\TT,m}$ spanned by all vertices that belong to cycles $L_p^{(k)}, \ 1\leq k\leq m$.
\end{definition}
\noindent Note that a patch contains not only arrows on cycles $L_p^{(k)}$ but also all arrows between them.

\begin{definition}
A cycle in $Q_{\TT,m}$ is called \textbf{local}, if it is contained in a single patch; otherwise, it is called \textbf{non-local}.
\end{definition}

\noindent Recall bijection $l$ on $Q_1$ that sends an arrow to the next one along a black triangle.
\begin{definition}
A cycle $\al_1\al_2... \al_d$ in  $Q_{\TT,m}$ is called \textbf{straight} if it is chordless or: $$l(\al_i)\neq\al_{i-1}\forall i\in \ZZ/ d \ZZ$$
\end{definition}

\medskip
Equipped with these definitions, we can give a roadmap (\ref{roadmap}) of the proof the finite-dimensionality result

Any potential can be split into three parts: $W = W_{\operatorname{prim}}+W_{\operatorname{loc}}+W_{\operatorname{nonloc}}$, where the first summand is the primitive part, the second summand is a linear combination of remaining local cycles, and the last summand includes all non-local terms in the expression for $W$. We use terms ``primitive part'', ``local part'' to refer to a corresponding summand in such decomposition.

The road map for the proof of the finite-dimensionality result of Theorem \ref{mn3} is as follows:

\begin{equation}\label{roadmap}
\begin{split}
W_{\operatorname{prim}}&+W_{\operatorname{loc}}+W_{\operatorname{nonloc}}
\xrightarrow{\text{Prop.\ref{prim}}}
W^{\circ}_{\operatorname{prim}}+W_{\operatorname{loc}}+W_{\operatorname{nonloc}}
\xrightarrow{\text{Prop.\ref{straight}}}
W^{\circ}_{\operatorname{prim}}+W_{\operatorname{loc, str}}+W_{\operatorname{nonloc}}\rightarrow \\
\xrightarrow{\text{Lemm. \ref{onlyL}}}&
W^{\circ}_{\operatorname{prim}}+\sum_{n\geq2,k,i} v_{n,k,i}\left(L_{x_i}^{(k)}\right)^n+W_{\operatorname{nonloc}}
\xrightarrow[\text{strongly generic!}]{\text{Lemm.\ref{removeL}}}
W^{\circ}_{\operatorname{prim}}+W_{\operatorname{nonloc}}
\xrightarrow{\substack{\text{Cor.\ref{nonloc}}\\ \text{Prop.\ref{nononloc}}}}
W^{\circ}_{\operatorname{prim}}+W^{\circ}_{\operatorname{nonloc}}
\end{split}
\end{equation}

Let us elaborate on this scheme here. The first step transforms the primitive part of $W$ to some simpler form (this is done only for the convenience of the exposition). Next two steps are used to simplify the local part of the potential; and then Lemma \ref{removeL} is used to get rid of the local part completely (see also the remark after the formulation of this lemma). These three steps are aggregated in the statement of Corollary \ref{Wloc}. The last step reduces nonlocal part of the potential to the form, where only cycles of degree at most $7$ are allowed. It is clear that after applying all steps, we get a potential that has nonzero coefficients only for cycles in a finite fixed collection; that concludes the proof of finite-dimensionality of right-equivalence classes. We marked the step where \emph{strongly generic} condition (\ref{strgen}) is crucially used in the scheme.

\medskip
\begin{proposition}\label{straight}
Any generic potential $W$ for $Q_{\TT,m}$ is right-equivalent to a linear combination of straight cycles.
\end{proposition}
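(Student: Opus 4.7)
The plan is to follow the iterative scheme sketched in the roadmap: build a convergent sequence of right-equivalences whose infinite composition sends $W$ to a potential supported only on straight cycles. The induction runs over the degree of the lowest non-straight cycle that still appears.

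The key combinatorial observation is that a cycle $C$ fails to be straight precisely when it contains a two-arrow fragment $l(\al)\al$, that is, two consecutive sides of some black triangle $T_\al = l^2(\al)\,l(\al)\,\al$. Because $W$ is generic, the chordless black cycle $T_\al$ enters $W$ with nonzero coefficient, and this is the lever for eliminating $C$. Writing $C$ cyclically as $l(\al)\,\al\cdot P_C$, where $P_C$ is a path of length $|C|-2$ from $s(l^2(\al))$ to $t(l^2(\al))$, I would construct a unitriangular right-equivalence $\varphi$ with $\varphi^{\diag}=\mathrm{id}$ by declaring
$$
\varphi(l^2(\al)) \;=\; l^2(\al) \;-\; \tfrac{W[C]}{W[T_\al]}\,P_C,
$$
and $\varphi(\beta)=\beta$ for every other arrow $\beta$. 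Then $\varphi(T_\al) = T_\al - \tfrac{W[C]}{W[T_\al]}\,P_C\,l(\al)\,\al$, and since $P_C\,l(\al)\,\al$ is cyclically equivalent to $C$, the coefficient of $C$ in $\varphi(W)$ is killed at leading order, as guaranteed by the cutting formula of Lemma \ref{cutlemma}.

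To promote this to a proof, at inductive step $n$ I would simultaneously eliminate all remaining non-straight cycles of degree $n$ by summing the above corrections: contributions attached to distinct black triangles are independent, while several corrections to the same arrow $l^2(\al)$ simply add. The resulting $\varphi_n$ is an automorphism by Proposition \ref{phi}, and the partial composition $\Phi_N=\varphi_N\circ\cdots\circ\varphi_3$ stabilizes in the $\mathfrak{m}$-adic topology on $R\langle\langle Q\rangle\rangle$. The limit $\Phi=\lim_N\Phi_N$ is then a bona fide right-equivalence whose image $\Phi(W)$ is supported on straight cycles.

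The main obstacle is that the substitution $l^2(\al)\mapsto l^2(\al)-\lambda P_C$ perturbs every other cycle $D$ of $W$ that passes through $l^2(\al)$, producing a new cycle of degree $|D|+|C|-3$. This strictly exceeds $n$ whenever $|D|\geq 4$, but equals $n$ exactly when $D$ is a chordless cycle of degree $3$ through $l^2(\al)$: namely $T_\al$ itself (handled by the designed cancellation) or, when applicable, a white triangle or a cycle $L_p^{(1)}$ of length three containing $l^2(\al)$. The degree-$n$ cycles generated in these residual cases may again fail to be straight and must be absorbed within the same inductive step. I would handle this by collecting the coefficients $\{\lambda_{C'}\}$ for all degree-$n$ non-straight cycles $C'$ into a single finite linear system and solving it; nondegeneracy of this system is ensured by the non-vanishing of the relevant primitive black-triangle coefficients, which holds by genericity of $W$. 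Once this is verified, the degree-by-degree induction closes and the convergence of $\Phi$ follows.
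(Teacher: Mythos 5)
Your proposal follows essentially the same strategy as the paper: attack each non-straight cycle $C$ through an elementary unitriangular substitution on the arrow $l^2(\al)$ completing the offending black triangle, use the black triangle's nonzero coefficient (genericity) as the lever, note that only degree-$3$ cycles through $l^2(\al)$ can reintroduce new non-straight cycles in the same degree, and take an infinite composition converging in the $\mathfrak{m}$-adic topology. Your formula correctly carries the factor $W[C]/W[T_\al]$ that the paper suppresses by normalizing black-triangle coefficients to $1$ beforehand (via Proposition~\ref{prim}, as indicated in the roadmap~(\ref{roadmap})); so far, so good.

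The gap is in the last step. You propose to absorb the residual degree-$n$ cycles by solving a finite linear system in the correction coefficients $\{\lambda_{C'}\}$, and you assert nondegeneracy ``is ensured by the non-vanishing of the relevant primitive black-triangle coefficients.'' That statement only guarantees the \emph{diagonal} entries of your system are nonzero; it says nothing about the off-diagonal terms created when the substitution on $l^2(\al)$ hits a length-$3$ cycle $L^{(1)}_p$ with $\val(p)=3$. Without controlling that off-diagonal structure, the system could a priori be singular, and your argument would not close. The paper supplies exactly the missing combinatorial fact: when a residual non-straight cycle $C'$ is produced by interaction with $L^{(1)}_p$, the critical arrow $l^2$ of the offending fragment of $C'$ lies on an $L^{(2)}_q$ cycle, which is the ``no-residual'' case. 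Hence the residual chain terminates after one step. This gives a strict ordering on bad cycles under which your linear system becomes triangular with nonzero diagonal (equivalently, the paper simply removes bad cycles one at a time using $\varphi_2\circ\varphi_1$). You need to prove this one-step termination — some explicit local analysis of the arrows $r(\al)$ and $lr(\al)$ in the patch, as in the paper's Figure~\ref{straightfig} argument — before the linear-system approach can be declared correct.

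One minor imprecision: a non-straight cycle is a \emph{non-chordless} cycle containing a fragment $l(\al)\al$; the black triangle $T_\al$ itself contains such a fragment but is straight because it is chordless. This does not affect your proof, but the phrasing ``fails to be straight precisely when it contains $l(\al)\al$'' should be qualified.
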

\begin{proof}
We use the idea described above. Let $N$ be the smallest degree of a non-straight cycle appearing in $W$. The set of all non-straight cycles of degree $N$ is finite. Let $C$ be one of them and $u = W[C]$. We construct right-equivalence $\Phi$ such that:
\begin{enumerate}[label=(\alph*)]\label{straight0}
\item The number of non-straight cycles of degree $N$ in $\Phi(W)$ is strictly less than in $W$;
\item $W \equiv   \Phi(W) \mod R\langle\langle Q\rangle\rangle_{\operatorname{cyc},{N+1}}$
\end{enumerate}
\noindent In fact, one can see from the proof that statement (b) can be strengthened to:
\begin{equation*}
W - \Phi(W)  \equiv   uC \mod R\langle\langle Q\rangle\rangle_{\operatorname{cyc},{N+1}}
\end{equation*}
Write $C = \al_1\ldots \al_{i-2}l(\al_i)\al_i\ldots \al_N$ and note that $(l^2(\al_i), i+1, i-2)$ is a chord of the cycle. There is right-equivalence $\varphi_{1}$ given by:
\begin{equation}\label{straight1}
\begin{cases}
\varphi_{1}(\be) = \be - u\cdot \al_{i+1}\al_{i+2}\ldots \al_N\al_1\ldots \al_{i-2},\ \ &\text{if } \be = l^2(\al_i);\\
 \varphi_{1}(\be) = \be, &\text{otherwise.}
\end{cases}
\end{equation}

Suppose $l^2(\al_i)$ belongs to a $L^{(2)}_p$ cycle, in this case the only cycle of degree $\leq 3$ passing through $f^2(\al_i)$ is a triangle $l^2(\al_i)f(\al_i)\al_i$, and we get:
\begin{equation}\label{straight2}
W - \varphi_{1}(W) \equiv uC\mod R\langle\langle Q\rangle\rangle_{\operatorname{cyc},{N+1}}
\end{equation}
\noindent In simple words, it means that we got rid of cycle $C$ after applying $\varphi_{1}$.

\begin{figure}
	\centering
	\begin{minipage}[b]{.33\textwidth}
		\centering
		\includegraphics{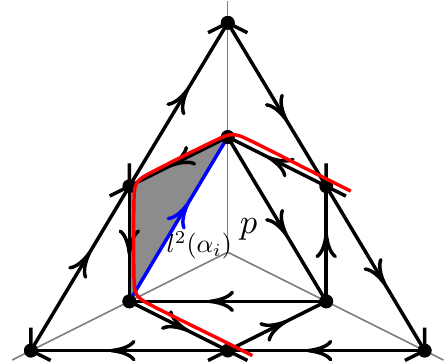}
	\end{minipage}%
	\begin{minipage}[b]{.33\textwidth}
	\centering
		\includegraphics{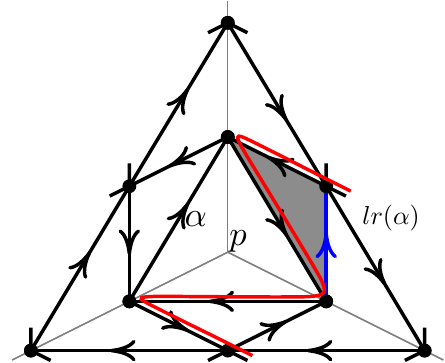}
	\end{minipage}%
	\begin{minipage}[b]{.33\textwidth}
		\centering
		\includegraphics{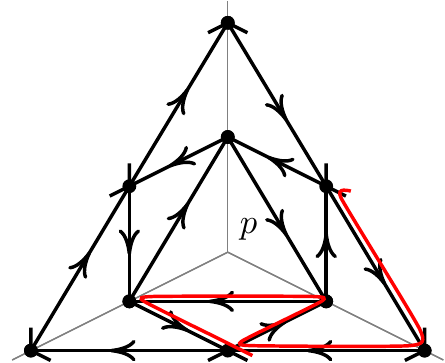}
	\end{minipage}
	\captionof{figure}{Proof of Proposition \ref{straight} when $l^2(\al_i)$ belongs to $L^{(1)}_p$.}
	\label{straightfig}
\end{figure}
Suppose now that for any $\al_i$ with $l(\al_i) = \al_{i-1}$, arrow $\al = l^2(\al_{i})$ belongs to a cycle of form $L_p^{(1)}$ (Fig. \ref{straightfig}). Fix one such $i$, and let $L_p^{(1)}$ be the cycle containing $\al$. Then necessarily $\al_{i+1} = r^{-1}(\al_{i})$ and applying $\varphi_{1}$ gives:
\begin{equation}\label{straighteq}
W - \varphi_{1}(W) \equiv uC - u v^{(1)}_p\cdot \al_{1}\ldots \al_{i-2}\left(\dd_{\al} L_p^{(1)} \right)\al_{i+1}\ldots \al_N\mod R\langle\langle Q\rangle\rangle_{\operatorname{cyc},{N+1}}
\end{equation}
We have used notation $\dd_{\al} L_p^{(1)}$ for the so-called ``cyclic derivative'' that is defined as follows. For a cycle $\gamma_1\gamma_2\ldots \gamma_n$ and an arrow $\gamma_k$:
$$
\dd_{\gamma_k} \left(\gamma_1\gamma_2\ldots \gamma_n\right) = \gamma_{k+1}\ldots\gamma_n\gamma_1\ldots\gamma_{k-1}.
$$
Thus, the last term in~(\ref{straighteq}) comes from interaction with cycle $L_p^{(1)} = l^{\operatorname{val}(p)-1}(\al)\ldots l(\al)\al$ and has degree $N$ iff $\operatorname{val}(p) = 3$. Right-equivalence $\varphi_{1}$ removes cycle $C$ but possibly introduces another cycle $C'$ of the same degree. We claim that it can be removed by applying the previous argument.

Observe that $\dd_{\al} L_p^{(1)}$ ends with $r(\al)$, and $\al_{i+1} = r^{-1}(\al_i) = l^{-1}r(\al)$ (see Fig. \ref{straightfig}). Thus, $C'$ is not straight because it has a fragment $r(\al)l^{-1}r(\al)$. These two arrows form two sides of a black triangle, and the third arrow $lr(\al)$ belongs to $L^{(2)}_q$ for some puncture $q$. This is the setting of the previous argument and $C'$ can be removed from $\varphi_1(W)$ as in the previous case after applying some $\varphi_2$ that acts non-trivially on $lr(\al)$.

It follows that after taking the composition $\varphi_2\circ\varphi_1$ if necessary we get right-equivalence $\Phi$ satisfying desired properties~\ref{straight0}.
\end{proof}

In the course of the proof we constructed necessary right-equivalence as a product of unitriangular right-equivalences changing a single arrow $\al$ to $\al - P$, where $P$ is a path from $s(\al)$ to $t(\al)$ of degree at least $2$:

\begin{equation}\label{req_notation}
\begin{cases}
\varphi(\be) = \be - P,\ \ &\text{if } \be = \al;\\
\varphi(\be) = \be, &\text{otherwise.}
\end{cases}
\end{equation}

\noindent Such equivalences will be frequently used in subsequent proofs and they we will call them \emph{elementary}. To make our notation more concise we will omit the second line from (\ref{req_notation}), and simply write
$$
\varphi(\be) = \be - P,\ \ \text{if } \be = \al
$$

\medskip
\noindent We need the following strengthening of this Proposition \ref{straight}:
\begin{corollary}\label{nonloc}
Any generic potential $W$ for $Q_{\TT,2}$ is right-equivalent to $W'$, such that
\begin{enumerate}[label=(\roman*)]
\item primitive and local parts of $W$ and $W'$ are the same;
\item all non-local cycles of degree at least $8$ in $W'$ are straight.
\end{enumerate}
\end{corollary}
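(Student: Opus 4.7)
The plan is to refine the inductive killing procedure of Proposition~\ref{straight}, applied only to non-local non-straight cycles of degree at least $8$. I would proceed by induction on $N \geq 8$: at each stage identify the finitely many non-local non-straight cycles of degree exactly $N$ in the current potential, and eliminate each one by the elementary unitriangular right-equivalence
$$
\varphi_1(\be) = \be - W[C]\cdot \al_{i+1}\al_{i+2}\ldots \al_N\al_1\ldots \al_{i-2}
$$
from the proof of Proposition~\ref{straight}, where $\be = l^2(\al_i)$ is the chord of the cycle $C$ being eliminated. An infinite composition of these equivalences converges in the completed path algebra and yields the desired $W'$.

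For the output $W'$ to satisfy the stronger conclusions of Corollary~\ref{nonloc}, two invariance properties of each $\varphi_1$ need checking. First, $\varphi_1$ only affects cycles of degree at least $N$: every correction term has the form $-W[C]\,W[D]\,D_\be$, where $D$ is some cycle through $\be$ and $D_\be$ is the cycle obtained from $D$ by substituting the path $P := \al_{i+1}\ldots\al_{i-2}$ (of length $N - 2$) for one occurrence of $\be$, so $\deg(D_\be) = \deg(D) + N - 3 \geq N$ since $Q_{\TT, 2}$ has no cycles of degree below $3$. This guarantees that work done at smaller values of $N$ is not undone.

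Second, both $W_{\prim}$ and $W_{\operatorname{loc}}$ must be preserved. The decisive observation is that because $C$ is non-local, the replacement path $P$ inherits non-locality, so every correction cycle $D_\be$ is itself non-local. This immediately excludes $D_\be$ from being a local cycle and from being any primitive cycle of type $L_p^{(k)}$ or puncture-white-region boundary, all of which are local. The only remaining primitive cycles of $Q_{\TT, 2}$ are black triangles (degree $3$) and non-puncture white region boundaries, which by Proposition~\ref{chordless} together with hypothesis~\ref{triangulationcond} have degree at most $7$ for $m = 2$; since $\deg(D_\be) \geq N \geq 8$, no collision with a primitive cycle can occur. Hence the primitive and local parts are strictly preserved by each $\varphi_1$.

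The final subtlety is the subcase from the proof of Proposition~\ref{straight} where $l^2(\al_i)$ lies on a cycle $L_p^{(1)}$ with $\val(p) = 3$: there $\varphi_1$ also spawns an auxiliary cycle $C'$ of the same degree $N$. The non-locality argument above shows $C'$ inherits non-locality from $C$, so the secondary equivalence $\varphi_2$ of that proof applies, and $\varphi_2 \circ \varphi_1$ strictly decreases the count of non-local non-straight cycles of degree $N$ while still preserving $W_{\prim}$ and $W_{\operatorname{loc}}$. The main obstacle is the combinatorial verification that every non-puncture primitive cycle in $Q_{\TT, 2}$ has degree strictly below $8$; given this, the corollary follows from the iteration above.
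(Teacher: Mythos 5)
Your argument hinges on a single claim: that whenever $C$ is a non-local non-straight cycle and one applies the elementary right-equivalence of Proposition~\ref{straight} to its chord $\be = l^2(\al_i)$, the substituted path $P = \be\Rightcircle C = \al_{i+1}\ldots\al_{i-2}$ ``inherits non-locality from $C$.'' This implication is false, and its failure is precisely the case that the paper's proof of this corollary is designed to handle.

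Concretely, consider the first case of Proposition~\ref{straight}, where $\be = l^2(\al_i)$ lies on a cycle $L^{(2)}_p$. Set $u = s(\al_i) = t(\be)$, $v = t(\al_i) = s(l(\al_i))$ and $w = t(l(\al_i)) = s(\be)$; then $w$ is the interior vertex of the triangle of $\TT$ containing the black triangle $\{u,v,w\}$, and if $p,q,r$ denote the corners of that triangle (with $\al_i$ on $L^{(1)}_q$ and $l(\al_i)$ on $L^{(2)}_r$) one has $w\in Q_p\cap Q_q\cap Q_r$, $u\in Q_p\cap Q_q$ but $v\notin Q_p$. Because $v\notin Q_p$, the cycle $C = P\,l(\al_i)\,\al_i$ can \emph{never} lie in $Q_p$; hence $C$ being non-local is perfectly compatible with $P\subset Q_p$, i.e.\ with $P$ being local. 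Such $C$ exist (e.g.\ $P$ a concatenation of pieces of $L^{(2)}_p$ and powers of the edge cycle on edge $pq$). When $P\subset Q_p$, the correction terms of $\varphi_1:\be\mapsto\be - W[C]\,P$ \emph{do} include local cycles: taking $D = L^{(2)}_p$, which passes through $\be$ and has nonzero coefficient in any generic potential, the correction cycle $\left(\dd_{\be}L^{(2)}_p\right)P$ lies entirely in $Q_p$ and has degree at least $N\geq 8$. This changes the local part of the potential and violates condition~(i), so your iteration does not converge to a $W'$ with the stated properties.

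The paper's proof handles exactly this situation by a further case split: after ruling out $P\subset Q_q$ (which would force $C$ local), it shows that if $P\subset Q_p$ one may reduce to $P$ being a power $E^k$ ($k\geq 2$) of the edge cycle between patches $Q_p$ and $Q_r$, and then replaces $\varphi_1$ by a \emph{different} elementary right-equivalence acting on an arrow $\al$ of $E$ lying on $L^{(2)}_r$. Only with this modified substitution are all new terms non-local, and a residual degree-$(N-1)$ cycle is cleaned up by the same trick. The degree bookkeeping and the observation that black triangles and edge cycles have degree $3$ and $4$ — which you flag as ``the main obstacle'' — are indeed fine, but they are the easy part; the genuine work is the $P\subset Q_p$ case that your proposal elides.
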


\begin{figure}
	\centering
	\begin{minipage}[b]{.33\textwidth}
		\centering
		\includegraphics{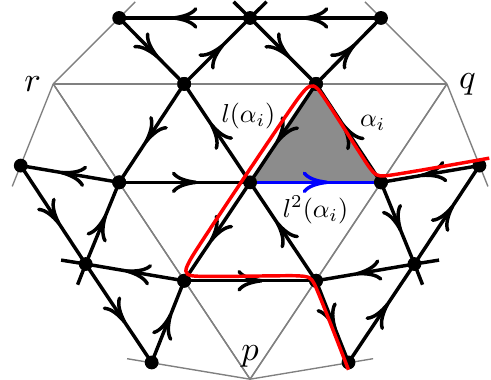}
	\end{minipage}%
	\begin{minipage}[b]{.33\textwidth}
		\centering
		\includegraphics{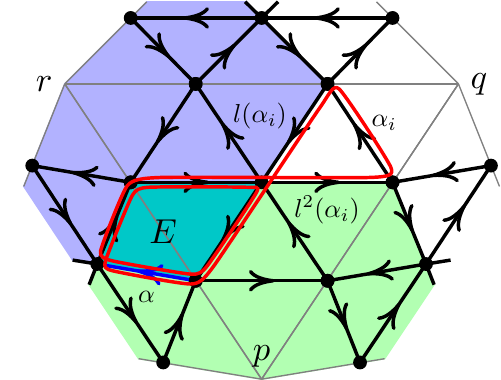}
	\end{minipage}%
	\begin{minipage}[b]{.33\textwidth}
		\centering
		\includegraphics{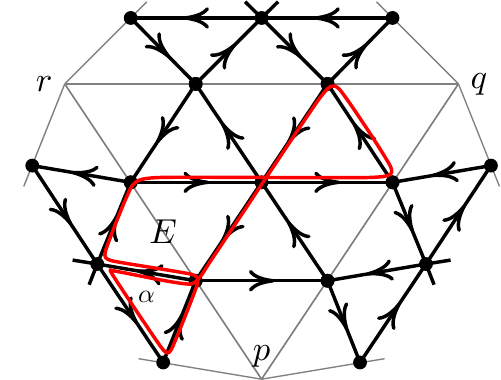}
	\end{minipage}
	\captionof{figure}{Proof of Corollary \ref{nonloc}}
	\label{nonlocfig}
\end{figure}

\noindent\emph{Remark:} The statement does not regard non-local cycles of degree less than $8$. The latter are either a product of two black triangles sharing a vertex, or a product of a black triangle and an edge cycle that share exactly one vertex. More precisely, these two classes are given by formulas $\overleftarrow{rllfl}(\al)$ and $\overleftarrow{ffrffl} (\al)$, where $\al$ is any arrow that belongs to a cycle of the form $L_p^{(1)}$. These types are shown as Triangle Terms in Table \ref{table1} and they are important for the second part of Theorem~\ref{mn3}.

\begin{proof}
We need to show that in the proof of Proposition \ref{straight}, for every non-local and non-straight cycle $C=\al_1\ldots \al_n$ in $W$, there is a right-equivalence that removes $C$ without changing local part of the potential.

As before, consider case when $C$ has a fragment $...l(\al_{i})\al_i... $, where $l^2(\al_i)$ belongs to a cycle of the form $L^{(2)}_p$. Without loss of generality we can assume that $\al_i$ belongs to some $L^{(1)}_q$ and $l(\al_1)$ belongs to some $L^{(2)}_r$ as shown on the left of Figure \ref{nonlocfig} (evidently, $p,q$ and $r$ form a triangle of $T$). Then we can rewrite $C = Pl(\al_i)\al_i$ where by non-locality assumption $P$ is a path that does not belong to patch $Q_q$ (indeed, otherwise $C$ is contained in $Q_q$).

If in addition $P$ does not belong to patch $Q_p$, one can apply an elementary right-equivalence:
\begin{equation}
\varphi(\be) = \be - P, \ \ \text{if}\  \be = l^2(\al_i)
\end{equation}

\noindent Note that all cycles in the expression for $W - \varphi(W)$ contain path $P$, hence are not local. Moreover, $W - \varphi(W) = C \mod R\langle\langle Q\rangle\rangle_{\operatorname{cyc},{n+1}}$, so the non-local non-straight cycle is removed, while the local part is unchanged.

Assume that $P$ is contained in $Q_p$, then necessarily $\al_{i+1} = l^{-1}(\al_i) = l^2(\al_i)$ (see the middle of Fig \ref{nonlocfig}). Therefore, we can rewrite our expression for $C$ as  $P'f(\al_{i})\al_{i}f^2(\al_{i})$, and without loss of generality can assume that $P'$ now belongs to \emph{both} patches $Q_p$ and $Q_r$ (otherwise, we can repeat the previous argument with $r$ in place of $p$). That implies that $P'$ is supported on a single edge cycle $E$ lying between these patches. So $P'$ has form $E^k$ and $k>1$, since  $\deg C > 7$.

Let $\al$ be an arrow of $E$ that belongs to $L^{(2)}_r$, then we have elementary right-equivalence:
$$
\varphi(\al) = \be - \al r^{-1}(\al)\al_{i-1}\al_i\al_{i+1}E^{k-2}r^2(\al)r(\al)\al , \ \text{if}\  \be = \al\\
$$

\noindent Again $W - \varphi(W)$ contains only non-local cycles. The only new term of degree at most $d$ is shown on the right of Figure \ref{nonlocfig} (for $k=2$):
$$
W - \varphi(W) = C - l^2(\al)l(\al)\al r^{-1}(\al)\al_{i-1}\al_i\al_{i+1}E^{k-2}r^2(\al)r(\al)\al\mod R\langle\langle Q\rangle\rangle_{\operatorname{cyc},{n+1}},
$$
\noindent Even though the second term on the right hand side has degree $n-1$, it can be seen that running essentially same argument for the fragment $\al l^{2}(\al)$ allows to remove it by a suitable right-equivalence. This concludes considerations of the case when there exists fragment $l(a_{i})a_i$ in $C$ such that $l^2(a_i)$ belongs to cycle $L^{(2)}_p$.
\smallskip

If, on the other hand, for any $i \in \ZZ/n\ZZ$ with $\al_{i-1} = l(\al_i)$ the arrow $l^2(\al_i)$ belongs to cycle of form $L^{(1)}_p$, the argument of Proposition \ref{straight} goes through without any changes, and with an additional remark that right-equivalences constructed there do not change local part of the potential in this case.
\end{proof}

\medskip
In Proposition \ref{straight} we proved that any potential is right-equivalent to a linear combination of straight cycles. Now we show that local part of a potential can be reduced to an expression involving only cycles $\left(L^{(k)}_p\right)^n$ (Lemma \ref{onlyL}).


\begin{figure}
	\centering
	\includegraphics{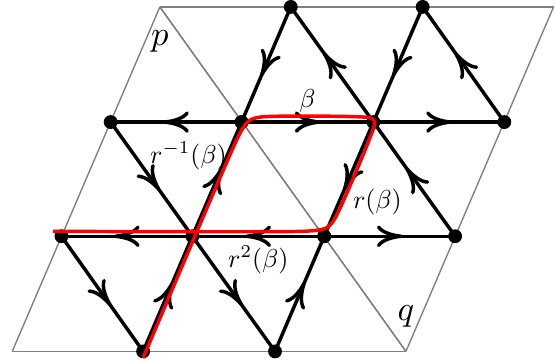}
  \captionof{figure}{Proof of Lemma~\ref{divE}}
	\label{divEfig}
\end{figure}

\begin{lemma}\label{divE}
Any non-chordless straight cycle either has form $\left(L^{(k)}_p\right)^n, n\geq 2$, or contains a fragment $... r^3(\al)r^2(\al)r(\al)\al...$, corresponding to some edge cycle.
\end{lemma}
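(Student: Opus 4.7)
The plan is to mimic the strategy of the proof of Proposition~\ref{chordless}: establish that at every index $i$ of $C = \al_1\al_2\cdots\al_n$ one has $\al_{i-1} = f(\al_i)$; once this is secured, the concluding argument of Proposition~\ref{chordless} forces $C = (L^{(k)}_p)^N$ for some puncture $p$ and some $1 \leq k \leq m$, after which non-chordlessness of $C$ combined with chordlessness of each $L^{(k)}_p$ (Lemma~\ref{chordless_classification}(a)) forces $N \geq 2$.

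To prove $\al_{i-1} = f(\al_i)$, assume as the contrapositive that $C$ is straight, non-chordless, and does not contain any fragment of the form $r^3(\al)r^2(\al)r(\al)\al$. Fix an index $i$. Straightness immediately gives $\al_{i-1} \neq l(\al_i)$. A local inspection at $t(\al_i)$, enumerating the two vertex types of $Q_{\TT,2}$ (interior vertices $(1,1,1)$ with three outgoing arrows, and edge vertices with fewer outgoing arrows after cancellation of $2$-cycles across $\TT$-edges), confirms that every outgoing arrow at $t(\al_i)$ lies in the set $\{l(\al_i), r(\al_i), f(\al_i)\}$, with coincidences such as $r(\al_i) = f(\al_i)$ possible at edge vertices. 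Thus the only alternative to $\al_{i-1} = f(\al_i)$ is $\al_{i-1} = r(\al_i)$, and it suffices to exclude this possibility.

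Suppose $\al_{i-1} = r(\al_i)$, and let $k$ be maximal with $\al_{i-j} = r^j(\al_i)$ for all $0 \leq j \leq k$. The arrows $\al_i, r(\al_i), \ldots, r^k(\al_i)$ lie on the unique white boundary cycle $W$ through $\al_i$. For $m = 2$ the white boundary cycles of $Q_{\TT,2}$ are precisely the edge cycles (length $4$) and the cycles $L^{(1)}_p$ (length $\val(p)$); no shorter white cycles arise, because every upward $3$-subsimplex of $\Delta$ for $m = 2$ has at least one side lying on the boundary of its $\TT$-triangle and thus contributes at most two arrows to $Q_{\TT,2}$. If $W = L^{(1)}_p$, iterating the identity $\al_{i-1} = r(\al_i)$ around all of $C$ forces $C = (L^{(1)}_p)^N$, contradicting the assumption that $C$ is not of the form $(L^{(k)}_p)^N$. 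The only remaining possibility is that $W$ is an edge cycle, in which case the $r$-trajectory of $\al_i$ inside $C$ produces precisely the forbidden fragment $r^3(\al_i)r^2(\al_i)r(\al_i)\al_i$, a contradiction. Hence $\al_{i-1} = f(\al_i)$ in every case, completing the proof.

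The main obstacle is the geometric bookkeeping in the last two paragraphs: one must carefully classify the white boundary cycles of $Q_{\TT,2}$ so as to exclude hypothetical shorter white regions, and verify that the outgoing arrows at every vertex of $Q_{\TT,2}$ are exhaustively described by the bijections $l, r, f$, so that no unforeseen alternative for $\al_{i-1}$ escapes the case analysis.
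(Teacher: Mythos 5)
Your overall strategy (use straightness to reduce to $\al_{i-1}\in\{r(\al_i),f(\al_i)\}$, and show that an $r$-step with $r\neq f$ forces an edge-cycle fragment) is the same as the paper's. However, there is a genuine gap in the edge-cycle case. You claim that the maximal forward $r$-run from $\al_i$ reaches $r^3(\al_i)$, i.e.\ that $\al_{i-3}=r^3(\al_i)$, but this is not forced by straightness. The four vertices of an edge cycle in $Q_{\TT,2}$ alternate between edge vertices (with two outgoing arrows, where $r=f$) and the interior vertices $(1,1,1)$ (with three outgoing arrows, where $l,r,f$ are all distinct). Since $r(\al_i)\neq f(\al_i)$ means $t(\al_i)$ is interior, the vertex $t(r(\al_i))$ is an edge vertex, which forces $\al_{i-2}=r^2(\al_i)$; but $t(r^2(\al_i))$ is again interior, and a straight cycle is free to take the $f$-branch there, so that $\al_{i-3}=f(r^2(\al_i))\neq r^3(\al_i)$ and your run stops at $k=2$. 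In that case the word $r^3(\al_i)r^2(\al_i)r(\al_i)\al_i$ simply does not occur in $C$.

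The fix is exactly the missing half of the paper's argument: one must also go backward one step. Because $s(\al_i)$ is an edge vertex (it lies on some $L_p^{(1)}$), its two incoming arrows are $l^{-1}(\al_i)$ and $r^{-1}(\al_i)=f^{-1}(\al_i)$, so straightness forces $\al_{i+1}=r^{-1}(\al_i)$. Together with $\al_{i-2}=r^2(\al_i)$ this yields the four consecutive arrows $r^2(\al_i)\,r(\al_i)\,\al_i\,r^{-1}(\al_i)$, which is the full edge cycle, i.e.\ the fragment $r^3(\al)r^2(\al)r(\al)\al$ with $\al=r^{-1}(\al_i)$. Your proposal never exploits that $s(\al_i)$ is an edge vertex, so it cannot close this case; the rest of the write-up (the classification of white regions, the local inspection of outgoing arrows, and the $L_p^{(1)}$ subcase) is correct but does not repair the omission.

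Minor points: the subcase $W=L_p^{(1)}$ you treat can in fact never trigger, since $\al_i$ lying on $L_p^{(1)}$ gives $r(\al_i)=f(\al_i)$ and there is then nothing to exclude; and the remark about ``cancellation of $2$-cycles across $\TT$-edges'' is a red herring for $m\geq 2$, where no arrows lie on the $\TT$-edges at all.
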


\begin{proof}

Let $C$ be a non-chordless straight cycle different from $\left(L^{(k)}_p\right)^n$, then according to our notation we can write it in the form $\overleftarrow{h_1\ldots h_n}(\al)$, where each $h_j$ is either $r$ or $f$. Let us require that in this expression we use bijection $l$ if there is ambiguity: $r(\overleftarrow{h_j\ldots h_n}(\al)) = f(\overleftarrow{h_j\ldots h_n}(\al))$. Thus, $h_1,\ldots h_n$ is a sequence of letters $r$ and $f$ where at least one $r$ occurs.

Consider this fragment $\ldots r(\be)\be\ldots$ in $C$. By construction $r(\be) \neq f(\be)$ and so $s(\be)$ belongs to some $L^{(1)}_p$. Since $C$ is straight, $\be$ is necessarily preceded by $f^{-1}(\be)= r^{-1}(\be)$. Similarly, $t(r(\be))$ belongs to some $L^{(1)}_{q}$, hence $r(\be)$ is necessarily followed by $r^2(\be)$ (see Fig \ref{divEfig}).

Combining these observations, we see that $C$ contains a fragment $\ldots r^2(\be)r(\be)\be r^{-1}(\be)\ldots$.
\end{proof}

\begin{lemma}\label {onlyL}
Any generic potential $W$ on $Q_{\TT,2}$ is right-equivalent to a sum of the form $W_{\operatorname{prim}} + W_{\operatorname{loc}} + W_{\operatorname{nonloc}}$, satisfying conditions:
\begin{enumerate}[label=(\roman*)]
\item $W_{\operatorname{prim}}$ is the primitive part of $W$;
\item $W_{\operatorname{loc}}$ is a linear combination of cycles of $\left(L^{(k)}_p\right)^n, \ n>1$;
\item $W_{\operatorname{nonloc}}$ is a linear combination of nonlocal cycles.
\end{enumerate}
\end{lemma}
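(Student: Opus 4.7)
By Proposition~\ref{straight}, after a right-equivalence that fixes $W_{\prim}$ (primitive cycles are chordless, hence straight), we may assume $W$ is a linear combination of straight cycles. Call a cycle \emph{bad} if it is local, straight, non-chordless, and not of the form $\left(L^{(k)}_p\right)^n$. The plan is to eliminate all bad cycles by an infinite composition of elementary right-equivalences, proceeding in increasing degree, as in the proofs of Proposition~\ref{straight} and Corollary~\ref{nonloc}.

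By Lemma~\ref{divE}, a bad cycle $C$ contains an edge cycle fragment $E = r^3(\al)r^2(\al)r(\al)\al$, so cyclically $C = \gamma E$ for a loop $\gamma$ at $s(\al)$ of degree $\deg C - 4$. Genericity gives $u := W[E] \neq 0$. For a bad cycle $C$ of currently minimal degree $d$ with $W[C] = \lambda \neq 0$, the substitution to use is the elementary right-equivalence
\begin{equation*}
\varphi(\al) = \al - \frac{\lambda}{u}\,\al\gamma,
\end{equation*}
where $\al\gamma$ is the parallel path from $s(\al)$ to $t(\al)$ (first traverse $\gamma$, then $\al$). The key computation $\varphi(E) = E - (\lambda/u)\,E\gamma$, together with the cyclic identity $E\gamma \sim \gamma E = C$, shows that the $-\lambda C$ contribution from the primitive term $uE$ cancels the original coefficient $\lambda$ of $C$ in $\varphi(W)$. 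Iterating over bad cycles in increasing degree, the infinite composition should converge in the completed path algebra $R\langle\langle Q\rangle\rangle$ and yield a potential whose local part is supported only on $\left(L^{(k)}_p\right)^n$, $n\geq 2$, with $W_{\prim}$ preserved because every substitution differs from the identity by paths of positive degree.

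The main obstacle is the side-effect analysis. Besides $E$, the arrow $\al$ lies in a unique black triangle $T_\al = l^2(\al)l(\al)\al$ and in a unique cycle $L^{(k)}_p$ (Lemma~\ref{chordless_classification}); the substitution $\varphi$ creates from $T_\al$ a new cycle $T_\al\gamma$ of degree $d-1$, which may itself be bad, so a naive degree induction does not close. As in the two-step cleanup at the end of the proof of Proposition~\ref{straight}, the plan is to post-compose $\varphi$ with a short sequence of follow-up elementary right-equivalences handling $T_\al\gamma$ and the $L^{(k)}_p$-contributions by the same $\gamma$-shift mechanism; a case analysis depending on whether the relevant chord sits in $L^{(1)}_p$, $L^{(2)}_p$, or a black triangle should show that the cleanup terminates at each fixed degree, so that the minimum degree of remaining bad cycles strictly increases after each compound step. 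This step is significantly more delicate than the analogous argument in the $m=1$ case of \cite{GLS13}, due to the presence of the additional primitive cycles $L^{(2)}_p$ absent there.
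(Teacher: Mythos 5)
There is a real gap in this sketch, and it is precisely at the point you flag as delicate and leave open. You correctly reduce to straight cycles via Proposition~\ref{straight}, correctly locate the edge cycle fragment $E$ via Lemma~\ref{divE}, correctly note that $W[E]\neq 0$ by genericity (since $E$ is a white-region boundary, hence chordless), and your substitution $\al\mapsto\al-(\lambda/u)\,\al\gamma$ is the same move the paper uses in one of its cases. But the claim that the black-triangle side effect $T_\al\gamma = l^2(\al)l(\al)\al C'$ ``may itself be bad'' and that a vague ``cleanup at each degree'' will close the induction is exactly the place where a proof is still needed, and the resolution is not a long cleanup sequence. The key observation the paper uses (and that you are missing) is that when $C$ is \emph{not} a power of $E$, the new degree-$(d-1)$ cycle $l^2(\al)l(\al)\al C'$ is \emph{non-local}: $C$ is contained in a single patch $Q_p$, the edge cycle $E$ lies in two patches $Q_p\cap Q_q$, and if the arrow one modifies is chosen to be associated to $p$ (resp.\ $q$), then the resulting black-triangle contribution escapes $Q_p$. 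Since non-local terms are irrelevant to the conclusion of this lemma, there is \emph{nothing to clean up} at lower degree; conditions (a) and (b) in the paper's formulation close the induction immediately. You also implicitly pick the arrow $\al$ without regard to which patch it is associated to; the paper needs two dual substitutions (one modifying $\al$, one modifying $r^3(\al)$) depending on whether $\al$ lies on the $p$-side or the $q$-side of the edge, and this choice is what guarantees the side effect is non-local.

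There is a second, separate gap: the exceptional case $C=E^k$ with $k\geq 2$, which you do not isolate. Here $\gamma=E^{k-1}$ is itself supported on the edge $E$, so the side effect $T_\al E^{k-1}$ is still local and genuinely needs a second elementary right-equivalence (the paper's $\varphi_2''$) to push it to a degree-$n$ local cycle that falls under the previous two cases. Without identifying this as a distinct case and supplying the explicit second move, the degree-by-degree induction you describe does not terminate. Finally, a small definitional issue: by making ``straight'' part of your definition of ``bad'' you risk letting non-straight local junk accumulate unaccounted for; the right invariant to control, as the paper does, is that no \emph{new local} cycle of degree $\leq n$ (straight or not) is introduced at each step, which is exactly what the non-locality observation delivers.
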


\noindent \emph{Remark:} By contrast with Lemma \ref{removeL}, here we require only that $W$ is generic.

\begin{proof}
By Proposition \ref{straight} we can assume that all local cycles in $W$ are already straight. Let $C = \al_1\ldots \al_n$ be a local cycle different from edge cycle $E$ and from a power of $L^{(k)}_p$; and suppose that $C$ has minimal degree among cycles of $W$ having these properties.

To prove the lemma it is enough to construct right-equivalence $\varphi$ such that:

\begin{enumerate}[label=(\alph*)]
\item $W - \varphi(W) = 0 \mod R\langle\langle Q\rangle\rangle_{\operatorname{cyc},{n-1}}$;
\item The only local cycle of degree at most $n$ in $W - \varphi(W)$ is $C$.
\end{enumerate}

\noindent If such right-equivalence is found, we can remove one-by-one local cycles of unsuitable form. Note that during the process non-local cycles may be created, but condition (a) guarantees that for every degree only finitely many new non-local cycles will be introduced.

Let $Q_p$ be a patch containing $C$. By Lemma \ref{divE} any local cycle different from $\left(L^{(k)}_p\right)^n$ is ``divisible'' by some edge cycle. That is, we can write $C = EC' = r^3(\al)r^2(\al)r(\al)\al C'$, where $\al$ is one of the arrows in edge cycle $E$ (see Fig \ref{onlyLfig1}). This cycle lies in two patches, say, $Q_p$ and $Q_q$, and hence $\al$ is associated to one of the punctures $p$ or $q$.
\smallskip

\begin{figure}
	\centering
	\begin{minipage}[b]{.5\textwidth}
		\centering
		\includegraphics{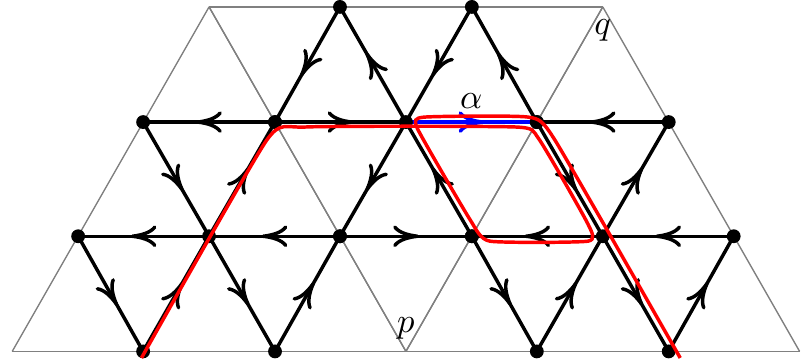}
	\end{minipage}%
	\begin{minipage}[b]{.5\textwidth}
		\centering
		\includegraphics{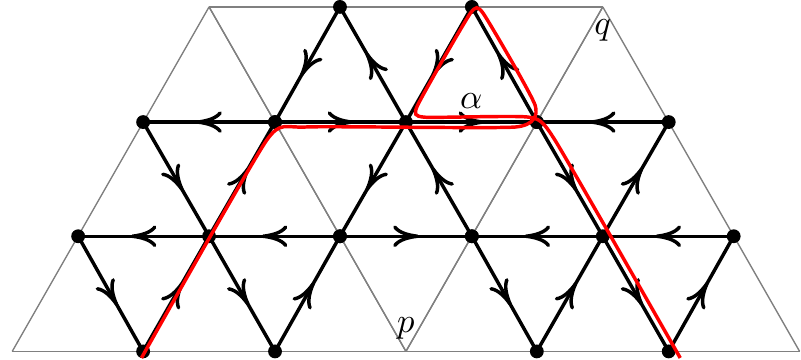}
	\end{minipage}
	\captionof{figure}{Lemma \ref{onlyL}, \emph{Case 1}}
	\label{onlyLfig1}
\end{figure}

\begin{figure}
	\centering
	\begin{minipage}[b]{.5\textwidth}
		\centering
		\includegraphics{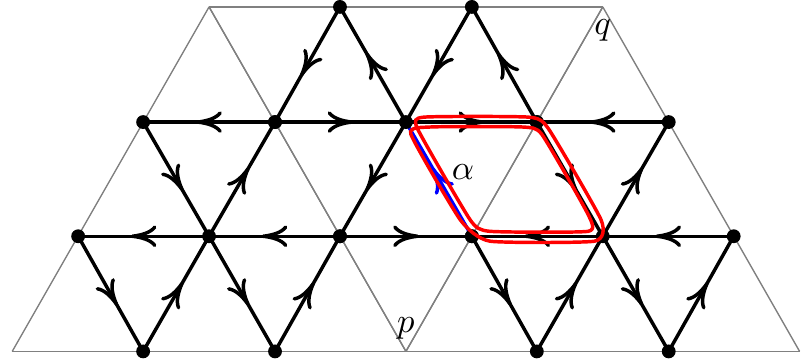}
	\end{minipage}%
	\begin{minipage}[b]{.5\textwidth}
		\centering
		\includegraphics{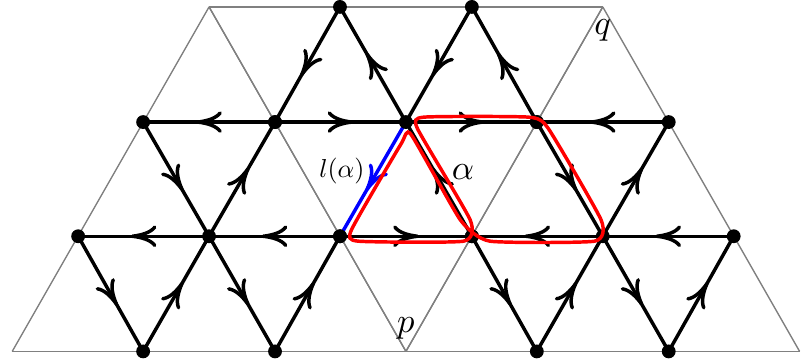}
	\end{minipage}
	\captionof{figure}{Lemma \ref{onlyL}, \emph{Case 2}}
	\label{onlyLfig2}
\end{figure}

 \noindent \emph{Case 1:} Assume first that $C$ is not proportional to a power of $E$, and that $\al$ is associated to $p$. Then we can use elementary right-equivalence $\varphi$:
\begin{equation}
\varphi(\be) = \be - \be C', \ \ \text{if } \be = \al
\end{equation}

\noindent Then $W - \varphi(W) = C + l^2(\al)l(\al)\al C' \mod R\langle\langle Q\rangle\rangle_{\operatorname{cyc},{n+1}} $, and the second term is not local (because $C'$ is contained in $Q_p$ and is not a power of $E$).

\smallskip
\noindent \emph{Case 2:} If $C$ is not proportional to a power of $E$, but $\al$ is associated to puncture $q$ different form $p$, then similarly to the previous case, the job is done by $\varphi$:
\begin{equation}
\varphi(\be) = \be - C'\be, \ \ \text{if } \be = r^3(\al)
\end{equation}

\begin{figure}
	\centering
	\includegraphics{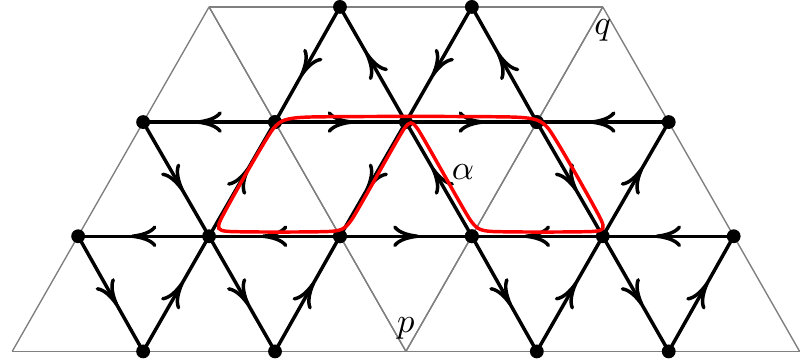}
	\captionof{figure}{Lemma \ref{onlyL}, \emph{Case 3}}
	\label{onlyLfig3}
\end{figure}

\noindent\emph{Case 3:}  Finally, consider the case $C = E^k = v\left(r^3(\al)r^2(\al)r(\al)\al \right)^k,\ k\geq 2,\ v\in \kk^{\times}$. Without loss of generality we can assume that $f^{-1}(\al)$ and $\al$ are associated to $q$ (see Fig. \ref{onlyLfig3}). We can apply a composition of two right-equivalences  $\varphi''_2 \circ \varphi''_1$, where:

\begin{minipage}{.5\textwidth}
\begin{equation*}
	\varphi''_1(\be) = \be - v\be E^{k-1}, \ \ \text{if } \be  = \al;
\end{equation*}
\end{minipage}%
\begin{minipage}{.5\textwidth}
\begin{equation*}
	\varphi''_2(\be) = \be + vE^{k-1}\be, \ \ \text{if } \be  = l(\al)
\end{equation*}
\end{minipage}

\smallskip
\noindent The first map $\varphi''_1$ creates a new local cycle of degree $n-1$:
$$
W -\varphi''_1(W) = C + l^2(\al) l(\al)\al E^{k-1} \mod R\langle\langle Q\rangle\rangle_{\operatorname{cyc},{n+1}}
$$
\noindent This new local cycle is replaced by yet another local cycle of degree $n$ after application of the second right-equivalence:
$$
\varphi''_1 (W) - \varphi''_2\circ \varphi''_1(W) =  - l^2(\al) l(\al)\al E^{k-1} - \overleftarrow{rrr}(l(\al))E^{k-1}\mod R\langle\langle Q\rangle\rangle_{\operatorname{cyc},{n+1}}
$$
\noindent The new term here is local of degree $n$, and it can be dealt with by arguments from Case 1 or 2. The proof of Lemma \ref{onlyL} is complete.
\end{proof}

Using Lemmas \ref{straight} and \ref{onlyL}, we can reduce any generic potential to the form, where local part consists only from chordless cycles and cycles of the form $\left(L^{(k)}_p\right)^n, \ n>1$. We now prove that if potential is \underline{strongly} generic, then these local terms can be pushed to non-local part of the potential.

\begin{lemma}\label{removeL}
Let $W$ be a strongly generic potential on $Q_{\TT,m}$ such that all non-chordless local cycles have form $\left(L^{(2)}_p\right)^n,\ n\geq2$. Then $W$ is right-equivalent to the sum of its primitive part and a linear combination of non-local cycles.
\end{lemma}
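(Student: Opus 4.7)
The plan is to argue by induction on the degree of the smallest non-chordless local cycle appearing in $W$. By hypothesis each such cycle is of the form $(L_p^{(2)})^n$ with $n\geq 2$; write $v_{p,n} = W[(L_p^{(2)})^n]$ and, for each puncture $p$, let $n_0(p)\geq 2$ be the smallest index with $v_{p,n_0(p)}\neq 0$. I will construct a sequence of unitriangular right-equivalences, indexed by the pairs $(p,n_0(p))$ in order of increasing cycle degree, whose composition converges in $R\langle\langle Q\rangle\rangle$ to a right-equivalence of the required form. Since only unitriangular equivalences are used, the primitive part of $W$ is preserved throughout by Lemma~\ref{cutlemma}, so in particular the hypothesis on $W$ is preserved at every stage of the induction.

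The heart of the argument is the construction of $\varphi_{p,n_0}$. Fix $p$ and set $n_0 = n_0(p)$. I would assemble a (possibly composite) elementary substitution with a free parameter $\mu$ acting on arrows inside the patch $Q_p$ --- necessarily touching both an arrow of $L_p^{(2)}$ and at least one cross-arrow into the inner layer $L_p^{(1)}$, in order to bring $v_p^{(1)}$ into play. A careful accounting of cuts via Lemma~\ref{cutlemma}, together with the orientations of $L_p^{(1)}$ and $L_p^{(2)}$ around $p$, should show that the new coefficient of $(L_p^{(2)})^{n_0}$ in $\varphi_{p,n_0}(W)$ equals
$$v_{p,n_0} + \mu\, \kappa\, \big(v_p^{(1)} + (-1)^{\val(p)} v_p^{(2)}\big)$$
for some explicit nonzero universal constant $\kappa$; the sign $(-1)^{\val(p)}$ arises from the parity of the number of arrows of $L_p^{(1)}$ involved in the substitution path. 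The strongly generic hypothesis (\ref{strgen2}) guarantees that the bracket is nonzero, so $\mu$ is uniquely determined and kills $v_{p,n_0}$. This step is the one place where (\ref{strgen2}) is indispensable.

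To complete the step one must verify that $\varphi_{p,n_0}$ introduces no new local non-primitive cycles outside the family $(L_p^{(2)})^m$ with $m>n_0$. Every new cycle arises as a product of a fragment of some cycle in $W$ with the substitution path, so the verification reduces to a case analysis in the spirit of Lemma~\ref{divE}: any local non-chordless cycle in the patch $Q_p$ produced this way must be divisible by an edge cycle, and --- by the specific routing of the substitution --- one checks that the only such cycles that arise are powers of $L_p^{(2)}$, while all remaining new cycles leave $Q_p$ and are therefore non-local and acceptable. Since each substitution acts trivially below degree $2\val(p) n_0$, the successive equivalences compose to a well-defined limit in the completed path algebra, and the resulting potential has no nonzero coefficient on $(L_p^{(2)})^n$ for any $p$ and $n\geq 2$.

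The principal obstacle is the explicit construction of the substitution realizing the combination $v_p^{(1)} + (-1)^{\val(p)} v_p^{(2)}$. This requires a delicate local choice of path inside $Q_p$ combining cross-arrows with segments of $L_p^{(1)}$ and $L_p^{(2)}$, subject to both a degree constraint (so that $U\cdot\partial_\al L_p^{(2)}$ is a cyclic shift of $(L_p^{(2)})^{n_0}$) and the nontrivial requirement that no unintended local cycle be created when $U$ is inserted into any cycle of $W$. Exhibiting a single recipe that works uniformly for every puncture and every $n_0$, and computing $\kappa$ to confirm its non-vanishing, is the main technical content of the argument.
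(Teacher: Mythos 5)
Your plan correctly identifies the shape of the paper's argument: a degree-by-degree induction in which each offending power $(L^{(2)}_p)^n$ is cancelled by a carefully chosen right-equivalence whose effect on that cycle's coefficient is governed by the strongly generic quantity $v^{(1)}_p + (-1)^{\val(p)} v^{(2)}_p$, and the need to check that no unwanted local cycles are introduced so the induction hypothesis survives. However, what you have written is an outline, not a proof. The step you flag as ``the principal obstacle'' --- actually exhibiting a substitution whose effect on $(L^{(2)}_p)^n$ is proportional to $k_p = v^{(1)}_p + (-1)^{\val(p)} v^{(2)}_p$ while producing no new bad local terms --- is essentially the entire content of the paper's proof. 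That proof does not use a single (even composite) elementary substitution with a free parameter $\mu$; it constructs an explicit chain of $\val(p)+3$ elementary right-equivalences $\varphi_{-2},\varphi_{-1},\varphi_0,\dots,\varphi_{\val(p)}$, the latter $\val(p)+1$ of which involve cross-arrows $\gamma_j$ and carry coefficients depending on $v^{(1)}_p$, and it tracks at each stage how the three new terms $C_1, C_2, C_3$ are either non-local, or transformed into $(L^{(2)}_p)^n$, or pushed around the puncture and eventually absorbed. The aggregate coefficient that emerges is $(v^{(2)}_p)^2 + (-1)^{\val(p)} v^{(1)}_p v^{(2)}_p = v^{(2)}_p\bigl(v^{(2)}_p + (-1)^{\val(p)} v^{(1)}_p\bigr)$, so your ``universal constant $\kappa$'' is not universal --- it carries a factor of $v^{(2)}_p$ (nonzero by ordinary genericity, so the conclusion is unaffected, but the claim as stated is imprecise).

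There is also a concrete error in your convergence argument. You assert that each substitution ``acts trivially below degree $2\val(p)n_0$'', i.e., below $N=\deg(L^{(2)}_p)^{n_0}$. This is false: the very first substitution $\varphi_{-2}$ in the paper's chain already produces the cycles $C_1,C_2,C_3$ of degree $N-2\val(p)+4$ and similar, which is strictly less than $N$ for $\val(p)\geq 3$. The paper therefore only claims the weaker bound $W-\Phi(W)\equiv 0\pmod{R\langle\langle Q\rangle\rangle_{\operatorname{cyc},N-c}}$ for a fixed constant $c$ (condition (b) of (\ref{Phi_prop})), which is enough for the limiting right-equivalence to exist in the completed path algebra but requires this more careful bookkeeping. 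If you want to complete the proof along these lines you will need to (i) exhibit the full chain of elementary substitutions, (ii) verify that the only local residue at degree $\leq N$ is the desired multiple of $(L^{(2)}_p)^n$, and (iii) replace the erroneous ``acts trivially below $N$'' claim with a uniform lower bound of the form $N-c$ on the degree of all terms created.
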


\begin{figure}
	\centering
	\begin{minipage}[b]{.5\textwidth}
		\centering
		\includegraphics{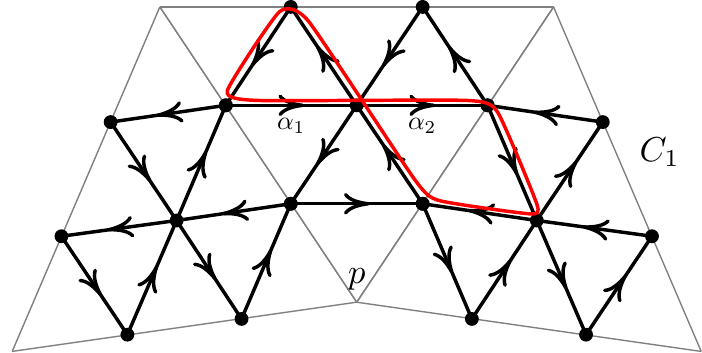}
		\includegraphics{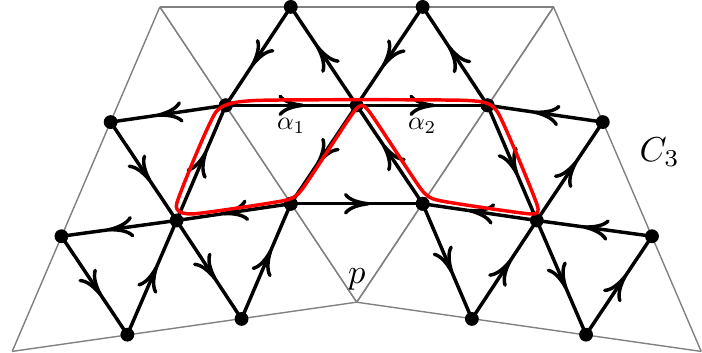}
	\end{minipage}%
	\begin{minipage}[b]{.5\textwidth}
		\centering
		\includegraphics{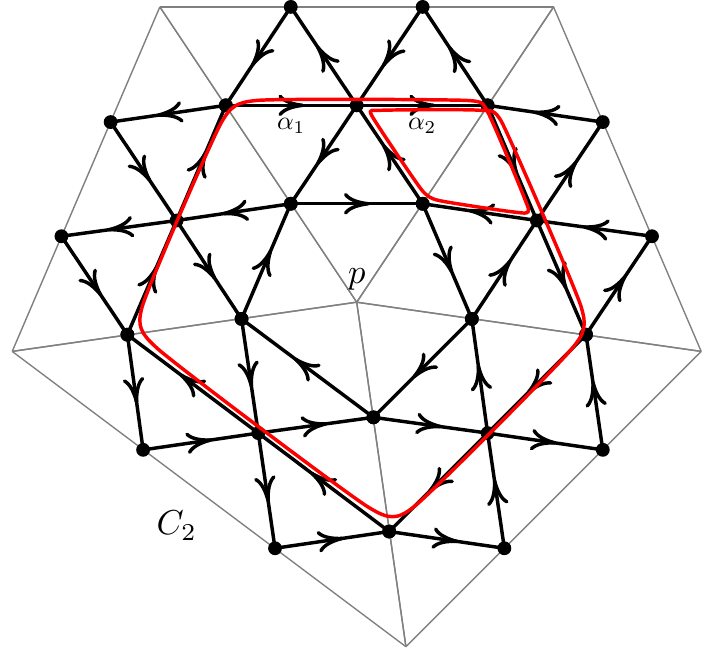}
	\end{minipage}
	\captionof{figure}{Lemma \ref{removeL}, cycles $C_1,C_2,C_3$.}
	\label{removeLfig1}
\end{figure}

\noindent \emph{Remark:} To see the relevance of this lemma, note that arguments of Propositions \ref{straight} and Lemmas \ref{onlyL} allow one not only to reduce generic potential to some specific form, but also to do that without creating new cycles of the form $\left(L^{(1)}_p\right)^n,\ n > 1$. That is, coefficients of these cycles remain unchanged during various manipulations in the proofs. This can be verified directly by inspecting our formulas for right-equivalences.

Thus, starting from any generic potential, one can first find a right-equivalent potential that does-not involve cycles $\left(L^{(1)}_p\right)^n,\ n > 1$. And then apply Proposition \ref{straight} and Lemma \ref{onlyL} to assume that the only local cycles, that are not chordless, have form $\left(L^{(2)}_p\right)^n,\ n > 1$. Hence, every strongly generic potential has a representative in its right-equivalence class for which assumptions of this lemma are satisfied.

\begin{proof}
Fix $n\geq 2$ and puncture $p$, such that $\left(L^{(2)}_p\right)^n$ is a non-chordless local cycle in the expression for $W$ of lowest degree among such cycles; let $N = 2n\val (p)$ be its degree. It is enough to construct a right-equivalence $\Phi$ such that:

\noindent\begin{minipage}{.01\textwidth}
\begin{equation}\label{Phi_prop}
\end{equation}
\end{minipage}%
\begin{minipage}{.95\textwidth}
\begin{enumerate}[label=(\alph*)]
\item The only local cycle in $W - \Phi(W)\mod R\langle\langle Q\rangle\rangle_{\operatorname{cyc},{N+1}}$ is $\left(L^{(2)}_p\right)^n$ with arbitrary nonzero coefficient;
\item $W - \Phi(W)\equiv 0\mod R\langle\langle Q\rangle\rangle_{\operatorname{cyc},{N-c}}$ for some constant $c$.
\end{enumerate}
\end{minipage}

Recall the coefficient $v^{(k)}_q = W[L^{(k)}_q]$. Let $\al_1$ and $\al_2$ be two arrows on $L^{(2)}_p$ such that $\al_2 = f(\al_1)$ and $r(\al_1)\neq \al_2$. Then define elementary right-equivalence~$\varphi_{-2}$:
\begin{equation}\label{phi-2}
\varphi_{-2}(\be) = \be + r^3(\al_2)r^2(\al_2)r(\al_2)\al_2\left(L^{(2)}_p\right)^{n-2}\al_1, \ \ \text{if } \be = \al_1
\end{equation}
There are three new terms of degree less than $N$ created by this operation:
\begin{equation}
\begin{split}
W - \varphi_{-2}(W) = -\ r^3(\al_2)r^2(\al_2)r(\al_2)\al_2&\left(L^{(2)}_p\right)^{n-2}\al_1l^2(\al_1)l(\al_1) -\\
-v^{(2)}_pr^3(\al_2)r^2(\al_2)r(\al_2)\al_2&\left(L^{(2)}_p\right)^{n-1}- \\
-\ r^3(\al_2)r^2(\al_2)r(\al_2)\al_2\left(L^{(2)}_p\right)^{n-2}\al_1r^3(\al_1)&r^2(\al_1)r(\al_1)\ \mod R\langle\langle Q\rangle\rangle_{\operatorname{cyc},{N+1}}
\end{split}
\end{equation}

\noindent Denote these terms by $C_1, C_2$ and $C_3$ respectively (see Fig. \ref{removeLfig1} for $n=2$ case). We deal with them one-by-one to arrive to the potential with desired properties.

\smallskip

Cycles from $C_1$ are not local for any $n\geq1$, and so we only need to check that their degree is bounded from below by $N-c$ for some universal constant $c$. In this case the existence of such constant is evident; for example, we can choose $c$ to be twice maximal valence of vertices of triangulation $\TT$.
\smallskip

For the cycle $C_2$ we take right-equivalence $\varphi_{-1}$:
\begin{equation}
\varphi_{-1}(\be) = \be - v^{(2)}_p\al_2\left(L^{(2)}_p\right)^{n-1}, \ \ \text{if } \be = \al_2
\end{equation}
\begin{figure}
	\centering
	\begin{minipage}[b]{.5\textwidth}
		\centering
		\includegraphics{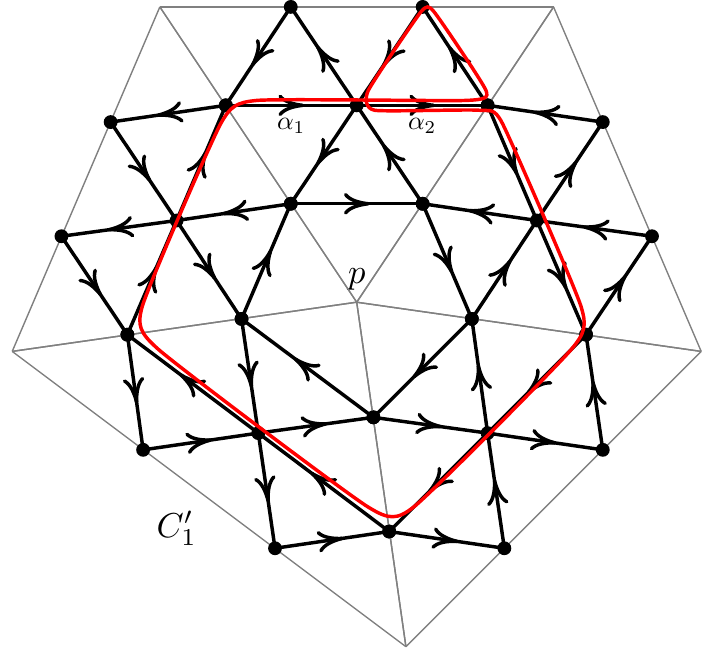}
	\end{minipage}%
	\begin{minipage}[b]{.5\textwidth}
		\centering
		\includegraphics{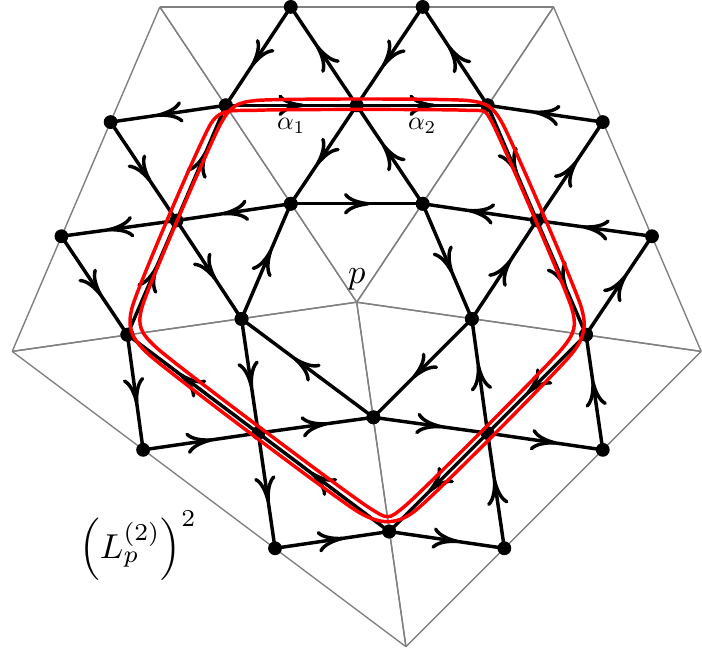}
	\end{minipage}
	\captionof{figure}{Lemma \ref{removeL}, cycles $C'_1,\ \left(L^{(2)}_p\right)^2$.}
	\label{removeLfig2}
\end{figure}
\noindent Then it is easy to see that (Fig. \ref{removeLfig2}):
$$
W - \varphi_{-1}\circ\varphi_{-2}(W) = -C_1+C'_1+ \left(v^{(2)}_p \right)^2\left(L^{(2)}_p\right)^{n}- C_3\
\mod R\langle\langle Q\rangle\rangle_{\operatorname{cyc},{N+1}}
$$
\noindent Here $C'_1$ is another linear combination of non-local cycles $l^2(\al_2)l(\al_2)\al_2\left(L^{(2)}_p\right)^{n-1}$. After two right-equivalences $\varphi_{-2}\circ\varphi_{-1}$, cycle $C_2$ has been replaced by a constant times $\left(L^{(2)}_p\right)^{n}$, and we leave it that way for now.

\smallskip
Now we deal with the cycle $C_3 = r^3(\al_2)r^2(\al_2)r(\al_2)\al_2\left(L^{(2)}_p\right)^{n-2}\al_1r^3(\al_1)r^2(\al_1)r(\al_1)$. This will require $\val(p)+1$ steps $\varphi_0,\ldots \varphi_{\val(p)}$, such that after applying their composition we arrive to:
\begin{equation}\label{finale}
W - \varphi_{\val(p)}\ldots\varphi_0\varphi_{-1}\varphi_{-2}(W) = C_{\operatorname{nonloc}}+ \left(v^{(2)}_p \right)^2\left(L^{(2)}_p\right)^{n} +(-1)^{\val(p)}v^{(2)}_p v^{(1)}_p\left(L^{(2)}_p\right)^{n} \
\mod R\langle\langle Q\rangle\rangle_{\operatorname{cyc},{N+1}}
\end{equation}

\noindent Where $C_{\operatorname{nonloc}}$ stands for some linear combination of non-local cycles.

To define $\varphi_j$ for $j = 0,\dots \val(p)-1$, we fix following notations (shown in Figure \ref{removeLfig3} for $\val(p)=5$):

\begin{enumerate}
\item Denote $\val(p)$ arrows that go from $L^{(1)}_p$ to $L^{(2)}_p$ by $\gamma_0,\ldots_{\val(p)-1}$ in the clockwise order starting from $\gamma_0 = r^3(\al_2)$.
\item Denote $\val(p)$ arrows that form $L^{(1)}_p$ by $\delta_0,\ldots\delta_{\val_p-1}$ in clockwise order starting from $\delta_0 = lr(\al_1)$.
\item Denote $2\val(p)$ arrows that form $L^{(2)}_p$ by $\delta_0',\delta_0'',\delta_1',\ldots\delta_{\val(p)-1}',\delta_{\val(p)-1}''$ in clockwise order starting from $\delta_0' = \al_1, \delta_0'' = \al_2$.
\end{enumerate}

\begin{figure}
	\centering
	\begin{minipage}[b]{.5\textwidth}
		\centering
		\includegraphics{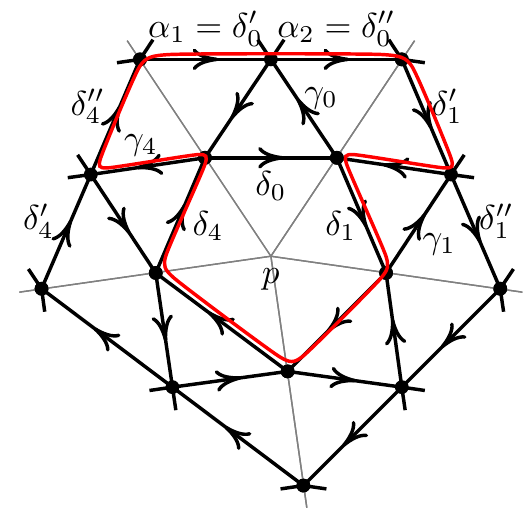}

		\includegraphics{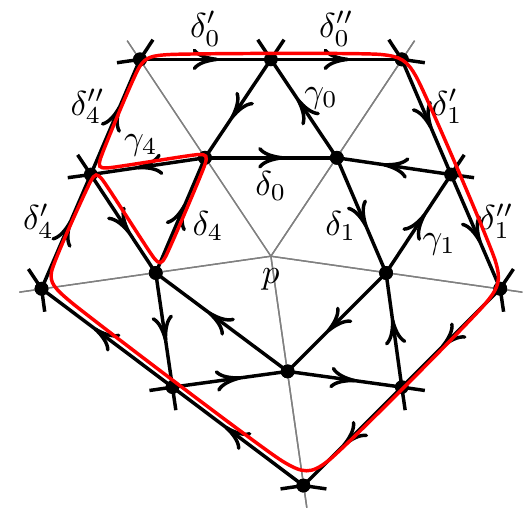}
	\end{minipage}%
	\begin{minipage}[b]{.5\textwidth}
		\centering
		\includegraphics{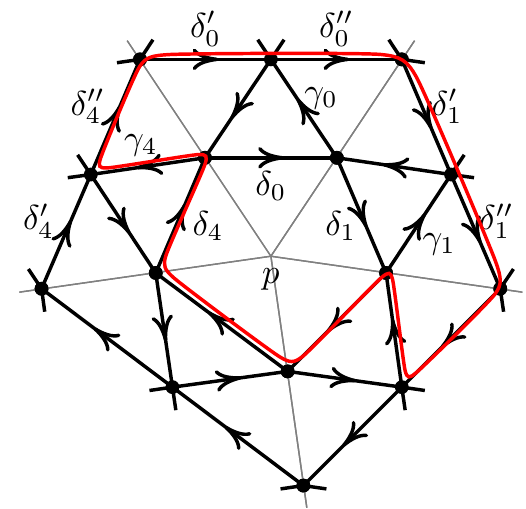}

		\includegraphics{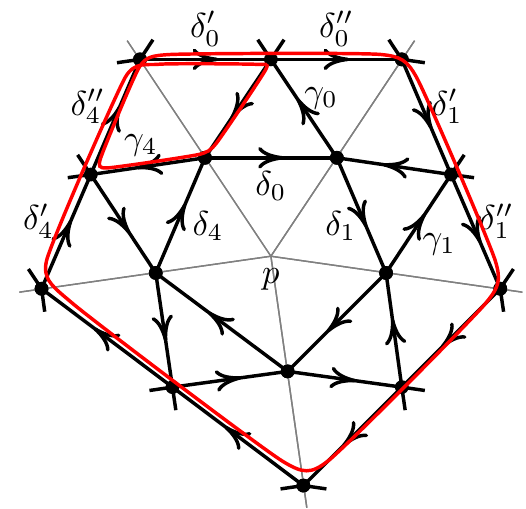}
	\end{minipage}
	\captionof{figure}{Lemma \ref{removeL}, sequence of cycle replacements for $\varphi_0,...\varphi_5$.}
	\label{removeLfig3}
\end{figure}

The zeroth right-equivalence is the following:
\begin{equation}
\varphi_{0}(\be) = \be - r^2(\al_2)r(\al_2)\al_2\left(L^{(2)}_p\right)^{n-2}\al_1r^3(\al_1)r^2(\al_1), \ \ \text{if } \be = rl(\al_1)
\end{equation}

\noindent Then modulo  $R\langle\langle Q\rangle\rangle_{\operatorname{cyc},{N+1}}$ we get (new term is depicted on top-left of Fig. \ref{removeLfig3}):
$$
W - \varphi_0\varphi_{-1}\varphi_{-2}(W) \equiv -C_1 +C'_1+ \left(v^{(2)}_p \right)^2\left(L^{(2)}_p\right)^{n} + v^{(1)}_p l(\gamma_1)\delta'_1\delta''_0\left(L^{(2)}_p\right)^{n-2}\al_1r^3(\al_1)r^2(\al_1)\delta_{\val(p)-1}...\delta_1
$$

\noindent Next for $j = 1,\ldots \val(p)-2$ take:

\begin{equation}
\varphi_{j}(\be) = \be + (-1)^{j+1} v^{(1)}_p \delta_j'...\delta_1' \delta_0''\left(L^{(2)}_p\right)^{n-2}\al_1r^3(\al_1)r^2(\al_1)\delta_{\val(p)-1}...\delta_{j+2}\delta_{j+1}, \ \ \text{if } \be = \gamma_j
\end{equation}

Denote obtained successive compositions by $\Phi_j = \varphi_j\ldots\varphi_0\varphi_{-1}\varphi_{-2}$. It is verified by induction directly that for $j = 1,\ldots \val(p)-2$  (two cycles on the top of Fig. \ref{removeLfig3} are the last two summands in \ref{removeLeq} for $n=2, \ j=1$):
\begin{equation}\label{removeLeq}
\begin{split}
\Phi_{j-1}(W) - \Phi_{j}(W) = C_{\operatorname{nonloc}}+(-1)^{j} v^{(1)}_p l(\gamma_{j})\delta_{j}'...\delta_1'\delta_0''\left(L^{(2)}_p\right)^{n-2}\al_1r^3(\al_1)r^2(\al_1)\delta_{\val(p)-1}...\delta_j + \\
+ (-1)^{j} v^{(1)}_p l(\gamma_{j+1})\delta_{j+1}'...\delta_1'\delta_0''\left(L^{(2)}_p\right)^{n-2}\al_1r^3(\al_1)r^2(\al_1) \delta_{\val(p)-1}...\delta_{j+1}
\mod R\langle\langle Q\rangle\rangle_{\operatorname{cyc},{N+1}}
\end{split}
\end{equation}

\noindent And this leads to (last term is on the bottom-left of Fig. \ref{removeLfig3}):
\begin{equation}
\begin{split}
W - \Phi_{\val(p)-2}(W) = C_{\operatorname{nonloc}} + \left(v^{(2)}_p \right)^2&\left(L^{(2)}_p\right)^{n} +\\ +(-1)^{\val(p)} v^{(1)}_pl(\gamma_{\val(p)-1})\delta_{\val(p)-1}'...\delta'_1\delta''_0\left(L^{(2)}_p\right)^{n-2}\al_1r^3(\al_1)&r^2(\al_1)\delta_{\val(p)-1}\mod R\langle\langle Q\rangle\rangle_{\operatorname{cyc},{N+1}}
\end{split}
\end{equation}
Define $\varphi_{\val(p)-1}$:
\begin{equation}
\varphi_{\val(p)-1}(\be) = \be + (-1)^{\val(p)}v^{(1)}_p \delta'_{\val(p)-1}...\delta'_1\delta''_0\left(L^{(2)}_p\right)^{n-2}\al_1r^3(\al_1)\be, \ \ \text{if}\ \  \be = \gamma_{\val(p)-1}
\end{equation}
After that we have (last term is on the bottom-right of Fig. \ref{removeLfig3}):
\begin{equation}
\begin{split}
W - \Phi_{\val(p)-1}(W) = C_{\operatorname{nonloc}} + \left(v^{(2)}_p \right)^2&\left(L^{(2)}_p\right)^{n} +\\ -(-1)^{\val(p)} v^{(1)}_p\left(L^{(2)}_p\right)^{n-1}&\al_1r^3(\al_1)r^2(\al_1)r(\al_1)\mod R\langle\langle Q\rangle\rangle_{\operatorname{cyc},{N+1}}
\end{split}
\end{equation}
Finally define $\varphi_{\val(p)}$ by:
\begin{equation}
\varphi_{\val(p)}(\be) = \be - (-1)^{\val(p)} \left(L^{(2)}_p\right)^{n-1}\be, \ \ \text{if}\ \ \be = \al_1\\
\end{equation}

\noindent It follows that:
\begin{equation}\label{removeLfin}
W - \Phi_{\val(p)}(W) = C_{\operatorname{nonloc}} + \left(v^{(2)}_p \right)^2\left(L^{(2)}_p\right)^{n}  +(-1)^{\val(p)} v^{(1)}_pv^{(2)}_p\left(L^{(2)}_p\right)^{n}\mod R\langle\langle Q\rangle\rangle_{\operatorname{cyc},{N+1}}
\end{equation}

This is precisely the form anticipated in \ref{finale}. Note that the degree of cycles in $C_{\operatorname{nonloc}}$ can be trivially bounded from below by
$$\deg\left(L^{(2)}_p\right)^{n-2} = 2(n-2)\cdot \val(p)$$

\noindent So the second condition from \ref{Phi_prop} is also fulfilled.

\smallskip
From this argument we observe that if $W$ is a strongly generic potential, then for any $c \in \kk$ one can find a right-equivalence that adds term $c\left(L^{(2)}_p\right)^{n}$ to the local part of the potential, and does not change local terms of smaller degree. The condition \ref{strgen2} was used to ensure that the aggregate coefficient of the power of $L^{(2)}_p$ in \ref{finale} is not zero. The lemma \ref{removeL} is proved.
\end{proof}

\begin{corollary}\label{Wloc}
Any strongly generic potential on $Q_{\TT,2}$ is right-equivalent to a sum of the form $W_{\operatorname{prim}} + W_{\operatorname{nonloc}}$, satisfying conditions:
\begin{enumerate}[label=(\roman*)]
\item $W_{\operatorname{prim}}$ is the primitive part of $W$;
\item $W_{\operatorname{nonloc}}$ consists only of nonlocal terms.
\end{enumerate}
\end{corollary}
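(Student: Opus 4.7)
The plan is to follow the roadmap (\ref{roadmap}) and simply collect the reductions already established, provided we can arrange the input of Lemma~\ref{removeL}. Starting from a strongly generic potential $W = W_{\prim}+W_{\operatorname{loc}}+W_{\operatorname{nonloc}}$, we first apply Proposition~\ref{straight} to replace $W$ by a right-equivalent potential all of whose cycles are straight. By Lemma~\ref{divE} every non-chordless straight local cycle either is a power $(L^{(k)}_p)^n$ or contains an edge-cycle fragment $r^3(\al)r^2(\al)r(\al)\al$, and Lemma~\ref{onlyL} then shows we can further right-equivalently assume that the entire local part is a linear combination of the primitive cycles together with powers $(L^{(k)}_p)^n$ with $n\geq 2$ and $k\in\{1,2\}$.

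The key step is to bring this into the shape required by Lemma~\ref{removeL}, namely that \emph{no} non-chordless local cycle of the form $(L^{(1)}_p)^n$ with $n\geq 2$ survives. Here we invoke the remark immediately following the statement of Lemma~\ref{removeL}: inspection of the explicit elementary right-equivalences (\ref{req_notation}) used in Propositions~\ref{straight} and~\ref{onlyL} shows that each of them replaces a single arrow $\be$ by $\be - P$ with $P$ a path of length $\geq 2$; the cycles created in $W-\varphi(W)$ all contain $P$, and in each of the cases analysed there the cycle produced is either non-local or of the shape $(L^{(2)}_p)^n$, never $(L^{(1)}_p)^n$ with $n\geq 2$. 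Consequently, if $W$ happens to have a non-trivial $(L^{(1)}_p)^n$-coefficient, we may first sweep those coefficients into the primitive projection class of $W$ via Proposition~\ref{prim} (they do not affect the $h,h_p$ invariants once absorbed, and they lie in the same $\GG^{\diag}$-orbit as a combination in which only $(L^{(2)}_p)^n$ survive). After this normalisation, we re-apply the straightening and Lemma~\ref{onlyL}: by the remark above, this leaves the $(L^{(1)}_p)^n$-coefficients untouched, so the only non-chordless local cycles that remain are of the form $(L^{(2)}_p)^n$ with $n\geq 2$.

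At this point the hypothesis of Lemma~\ref{removeL} is satisfied, and the strongly generic condition (\ref{strgen2}) guarantees that the aggregate coefficient appearing in (\ref{removeLfin}) is nonzero, so the inductive procedure of that lemma removes all $(L^{(2)}_p)^n$-terms from the local part, pushing them into $W_{\operatorname{nonloc}}$. The result is a right-equivalent potential of the form $W'_{\prim}+W'_{\operatorname{nonloc}}$, and since none of the right-equivalences in the whole chain alters the primitive part (each elementary move affects only coefficients of cycles with a chord, leaving the chordless cycles fixed by Lemma~\ref{cutlemma}), the primitive part of the result is still the primitive part of $W$, establishing (i) and (ii).

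The main obstacle is the bookkeeping behind the remark after Lemma~\ref{removeL}: one must verify that each elementary right-equivalence used in Propositions~\ref{straight} and~\ref{onlyL} creates cycles of the form $(L^{(1)}_p)^n$, $n\geq 2$, with coefficient zero, so that the reduction to the hypothesis of Lemma~\ref{removeL} does not reintroduce exactly those terms we are trying to eliminate. Everything else is a direct concatenation of the preceding results and the convergence of the infinite composition of right-equivalences in the completed path algebra $R\langle\langle Q\rangle\rangle$ guaranteed by the degree bound in property (b) of each step.
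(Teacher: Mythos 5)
You correctly identify the skeleton of the argument — apply Proposition~\ref{straight} and Lemma~\ref{onlyL} to reduce the local part to powers of $L^{(k)}_p$, then invoke Lemma~\ref{removeL} — and you also correctly recognize that the hypotheses of Lemma~\ref{removeL} require first eliminating the cycles $(L^{(1)}_p)^n$ with $n\geq 2$. However, the mechanism you propose for that elimination is wrong. Proposition~\ref{prim} concerns the action of $\GG^{\diag}$ on the span of \emph{chordless} cycles; the cycles $(L^{(1)}_p)^n$ with $n\geq 2$ are not chordless (they pass through each vertex of $L^{(1)}_p$ more than once), so they simply do not live in the space Proposition~\ref{prim} talks about, and there is no sense in which their coefficients can be ``swept into the primitive projection class.'' Moreover, elements of $\GG^{\diag}$ act by rescaling arrows, hence can only multiply the coefficient of a fixed cycle by a nonzero scalar: they can neither make a nonzero $(L^{(1)}_p)^n$-coefficient vanish nor transform a power of $L^{(1)}_p$ into a power of $L^{(2)}_p$. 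The parenthetical claim about $h, h_p$ and $\GG^{\diag}$-orbits is therefore not merely unproved but false.

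What is actually needed (and what the remark following Lemma~\ref{removeL} is asserting) is a separate, preliminary reduction using unitriangular right-equivalences: an $L^{(1)}$-analogue of the cancellation scheme in Lemma~\ref{removeL}, exploiting $v^{(1)}_p\neq 0$ (mere genericity, since $L^{(1)}_p$ is the innermost cycle around $p$ and there is no further inner cycle to interact with) to kill all terms $(L^{(1)}_p)^n$, $n\geq 2$. Only after that does one invoke the bookkeeping observation that the elementary moves in Proposition~\ref{straight} and Lemma~\ref{onlyL} never re-create $(L^{(1)}_p)^n$-terms, so the normal form feeding into Lemma~\ref{removeL} persists. Your last paragraph describes the bookkeeping check correctly, but it misses that the preliminary elimination of the $(L^{(1)}_p)^n$-terms is the genuine content that your proof has not supplied.
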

The corollary follows immediately from Lemmas \ref{onlyL} and \ref{removeL}. To finish the proof of finite-dimensionality of $\Pot_{Q_{\TT,2}}$ it remains to deal with non-local cycles. Moreover, another Corollary~\ref{nonloc} allows one to assume that all non-local cycles of degree at least $8$ are straight. In fact, this remark is not so essential to the logic of our argument. Instead we will apply Corollary \ref{nonloc} whenever non-straight and non-local cycles appear during our transformations.

\medskip
Recall that for any potential on $Q_{\TT,m}$ there is a decomposition $W = W_{\operatorname{prim}} + W_{\operatorname{loc}} + W_{\operatorname{nonloc}}$, where $W_{\operatorname{prim}}$ is the primitive part, and local part $W_{\operatorname{loc}}$ consists of all remaining local cycles (that is, local cycles that are not chordless).

\begin{proposition}\label{nononloc}
Let $W$ be a generic potential with zero local part: $W_{\operatorname{loc}} = 0$. Then $W$ is right equivalent to $W_{\operatorname{prim}} + W^{\circ}_{\operatorname{nonloc}}$, where the nonlocal term consists of cycles of degree at most $7$.
\end{proposition}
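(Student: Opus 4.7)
The plan is to follow the now-familiar paradigm: construct an infinite composition of elementary right-equivalences in the completed path algebra that eliminates all non-local cycles of degree $\geq 8$ while preserving $W_{\operatorname{prim}}$ and respecting the condition $W_{\operatorname{loc}}=0$. The composition will be built degree by degree, and convergence in the $I$-adic topology of $R\langle\langle Q\rangle\rangle$ will guarantee that the limiting right-equivalence is well-defined.

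As a first step, I would apply Corollary~\ref{nonloc} to assume all non-local cycles of degree $\geq 8$ in $W$ are straight. Now let $n\geq 8$ be the minimal degree at which a non-local cycle appears, and fix such a cycle $C$. By Lemma~\ref{divE}, $C$ must contain an edge-cycle fragment $\overleftarrow{rrr}(\beta)$, so we can write $C = \overleftarrow{rrr}(\beta)\cdot C'$ with $C'$ a path of degree $n-4\geq 4$. Removal of $C$ would then be achieved by an elementary right-equivalence targeting an arrow of the edge-cycle fragment, chosen so that it serves as a chord for a black triangle appearing as a chordless cycle in $W_{\operatorname{prim}}$. The interaction with this triangle produces a multiple of $C$ in $\varphi_C(W)$, canceling the original.

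The critical verification is that the other cycles introduced by $\varphi_C$ modulo degree $>n$ are all non-local, since we do not have Lemma~\ref{removeL} available without the strongly generic hypothesis. The idea is to exploit the length $\deg C\geq 8$: because the ``complement'' path $P$ used in $\varphi_C(\gamma)=\gamma-P$ has degree at least $5$, its composition with any short primitive cycle (black or white triangle of degree $3$, cycle $L_p^{(k)}$ of degree $k\cdot\val(p)$) must leave any single patch and produce only non-local terms. Any non-straight cycles created in the process can be straightened by another application of Corollary~\ref{nonloc} without touching the local part. Iterating, we progressively raise the minimal degree of non-local cycles in $W$ above any fixed bound, so the infinite composition converges to a right-equivalence producing the desired form.

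The main obstacle I anticipate is the combinatorial case analysis required to define $\varphi_C$ uniformly over all types of non-local straight cycles of degree $n$: one needs to treat separately cycles containing a single edge-cycle fragment versus several, cycles passing through one patch versus many, and cycles where the candidate chord's primitive triangle overlaps a second patch. The cut-off at degree $7$ is sharp, as suggested by the remark after Corollary~\ref{nonloc}: non-local cycles of degree $\leq 7$ (products of a triangle with an edge cycle, or of two triangles sharing a vertex, i.e.\ cycles of the forms $\overleftarrow{rllfl}(\al)$ and $\overleftarrow{ffrffl}(\al)$) are precisely the ones for which no such ``chord and cancel'' argument produces only non-local tails, and they must therefore persist in $W^{\circ}_{\operatorname{nonloc}}$.
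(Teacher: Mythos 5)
Your high-level strategy (straighten via Corollary~\ref{nonloc}, then eliminate straight non-local cycles one degree at a time via elementary right-equivalences targeting an arrow of the edge-cycle fragment) does match the paper's outline, but your proposed elimination step has a genuine gap that the paper's proof works hard to close. First, for a \emph{straight} cycle $C$ the ``cut against a black triangle'' mechanism is not available: a chord $\gamma$ of $C$ with $\cut_\gamma C$ a black triangle forces a fragment $l(\alpha_i)\alpha_i$ in $C$, which is exactly what straightness forbids. What the paper does instead is apply $\varphi_1(\alpha)=\alpha-\alpha E^{n-1}P$ where $\alpha$ lies on the edge cycle $E=\overleftarrow{rrr}(\alpha)$, so that the cancellation of $C$ comes from the \emph{white} region $E$ (degree $4$), while the interaction with the black triangle $l^2(\alpha)l(\alpha)\alpha$ through $\alpha$ produces a new non-straight, non-local term $l^2(\alpha)l(\alpha)\alpha E^{n-1}P$ of degree $N-1$. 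This is the crux: the byproduct is one degree \emph{lower} than $C$, so the ``progressively raise the minimal degree'' induction you invoke breaks exactly here. Your length-based argument (``$P$ has degree $\geq 5$, hence byproducts are non-local'') correctly controls locality but says nothing about degree.

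When $N=8$, the byproduct has degree $7$ and Corollary~\ref{nonloc} is simply silent about it, so you cannot clean it up at all by the means you propose. When $N>8$ you can remove the degree-$(N-1)$ byproduct, but doing so is only guaranteed (by the analogue of Proposition~\ref{straight}(b)) to create terms in degree $\geq N$, and there is no a priori decreasing quantity showing this loop terminates at degree $N$. The paper resolves this by the concrete sequence $\varphi_1,\ldots,\varphi_4$ in the proof of Proposition~\ref{nononloc}, which replaces $C=E^nP$ by another degree-$N$ cycle $C_1$ in which the edge-cycle power drops from $n$ to $n-1$. Iterating this rotation $n$ times terminates (the $E$-power is a nonnegative integer strictly decreasing), and the final cycle $C_n$ is non-straight of degree $N\geq 8$, at which point Corollary~\ref{nonloc} finishes. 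Without this ``power of $E$'' bookkeeping, or some comparable strictly decreasing invariant at fixed degree, the infinite composition you describe is not shown to converge.
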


\noindent \emph{Remark:} This proposition together with previous facts implies that $\Pot_{Q_{\TT,2}}$ is finite dimensional. Indeed, we can remove all non-primitive local cycles by Corollary \ref{Wloc}, and then use Proposition \ref{nononloc}. Then all cycles in the expression are either chordless or have degree at most $7$. There are evidently only finitely many such cycles in $Q_{\TT,2}$; Theorem \ref{mn3} follows.

\begin{proof}
Let $C$ be a nonlocal cycle of degree at least $8$ appearing in $W$ with nonzero coefficient, and suppose that $C$ has minimal degree among cycles having these properties. Denote by $u\in \kk^{\times}$ the coefficient of $C$ in $W$. To prove the proposition, it is enough to find a right-equivalence $\Phi$, satisfying properties:
\begin{enumerate}[label=(\alph*)]
\item $W - \Phi(W) = uC \mod R\langle\langle Q\rangle\rangle_{\operatorname{cyc},{N+1}}
$, where $N = \deg (C)$;
\item $W - \Phi(W)$ has only non-local terms.
\end{enumerate}

\begin{figure}
	\centering
	\begin{minipage}[b]{.5\textwidth}
		\centering
		\includegraphics{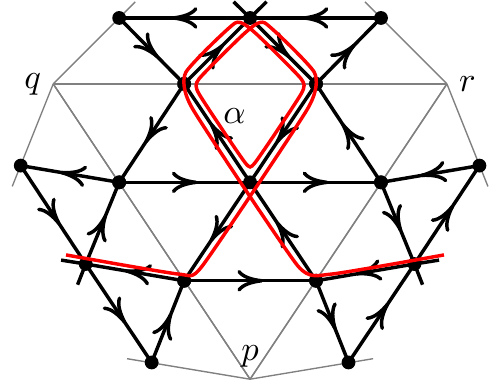}

		\vspace{10pt}
		\includegraphics{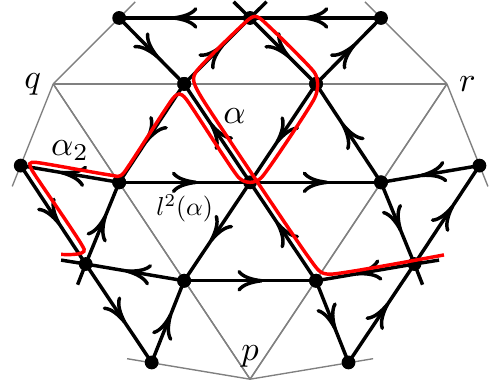}
	\end{minipage}%
	\begin{minipage}[b]{.5\textwidth}
		\centering
		\includegraphics{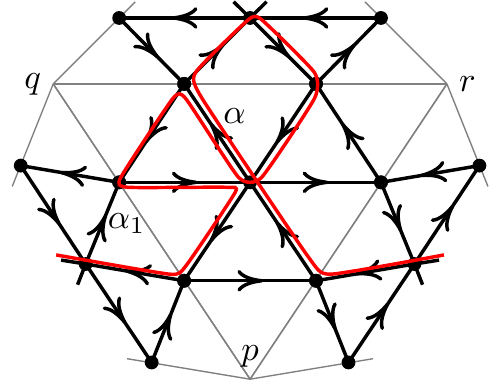}

		\vspace{10pt}
		\includegraphics{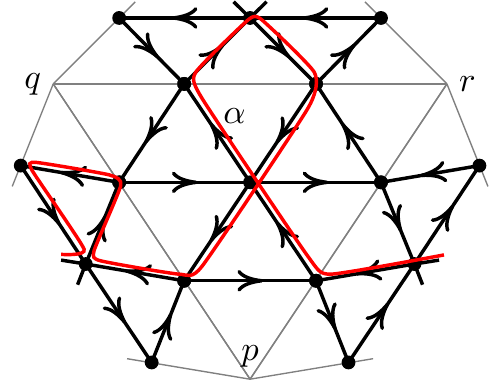}
	\end{minipage}
	\captionof{figure}{Lemma \ref{nononloc}, sequence of cycle replacements for $\varphi_1,\varphi_2,\varphi_3$.}
	\label{nononlocfig1}
\end{figure}

\noindent Similarly to the argument in the proof of Corollary \ref{Wloc}, we define a sequence of elementary right-equivalences, such that their composition satisfies the properties listed above.

If $C$ is not straight, one can apply Corollary \ref{nonloc} to remove it by applying a suitable right-equivalence, that does not change the local part of $W$. Thus, we assume that $C$ is straight. Then by extending slightly the argument from Lemma \ref{divE}, it is not difficult to see that $C$ has form $E^n C'$, where $E = r^3(\al)r^2(\al)r(\al)\al$, and $P $ is some path with $s(P) = t(P) = s(\al)$, satisfying $P =u\cdot f^{-1}(\al) \ldots f(r^3(\al))$. Example with $n = 2$ is given on top-left of Figure \ref{nononlocfig1}.

The first right-equivalence $\varphi_1$ is defined by:
\begin{equation}
\varphi_1(\be) = \be - \be E^{n-1}P, \ \ \text{if}\ \ \be = \al \\
\end{equation}

\noindent Since in the first case all new terms in $\varphi_1(W)$ have subfragment $\al E^{n-1}P$, which is not local, then the right-equivalence does not change local part of $W$. In fact, this argument for the local terms can be adapted for every step in this prove, so we won't mention it again. We have:
$$
W - \varphi_1(W) = C + l^2(\al)l(\al)\al E^{n-1}P \mod R\langle\langle Q\rangle\rangle_{\operatorname{cyc},{N+1}}
$$
\noindent The second summand is shown on top-right of Figure \ref{nononlocfig1}.

Now note that $P$ is straight and then necessarily $P = u\cdot P' f^2(r^3(\al))f(r^3(\al))$ (where $P'$ denotes the path that forms corresponding part of $P$). Since $f^2(r^3(\al)) = r^2(l^2(\al))$ we conclude, that the cycle $l^2(\al)l(\al)\al E^{n-1}P$ has a chord $\al_1$ from $r^2(l^2(\al))$ to $s(l^2(\al))$. Then we define $\varphi_2$:
\begin{equation}
\varphi_2(\be) = \be + u\cdot l(\al)\al E^{n-1} P', \ \ \text{if}\ \ \be = \al_1 \\
\end{equation}

\noindent Denote successive compositions by $\Phi_j = \varphi_j\ldots\varphi_1$, for this notation we get:
$$
W - \Phi_2(W) = C - u\cdot  E^{n-1} P' \overleftarrow{lrl}(\al) \mod R\langle\langle Q\rangle\rangle_{\operatorname{cyc},{N+1}}
$$
\noindent New cycle is on bottom-left of Figure \ref{nononlocfig1}.
Set $\al_2 = r(l(\al))$ and take $\varphi_3$:
\begin{equation}
\varphi_3(\be) = \be - u\cdot E^{n-1}P' l(\al_2)\al_2, \ \ \text{if}\ \ \be = l^2(\al) \\
\end{equation}
\noindent This results in (see bottom-right of Figure \ref{nononlocfig1}):
$$
W - \Phi_3(W) = C + u\cdot \al_1r^{-1}(\al_1)r^{-2}(\al_1)E^{n-1}  P'l(\al_2)\al_2 \mod R\langle\langle Q\rangle\rangle_{\operatorname{cyc},{N+1}}
$$

Define $\varphi_4$:
\begin{equation}
\varphi_4(\be) = \be + u \cdot r^2(l^2(\al))r(l^2(\al)) E^{n-1}  P' \be, \ \ \text{if}\ \ \be = l(\al_2) \\
\end{equation}
\noindent We get:
$$
W - \Phi_4(W) = C - u\cdot r^2(l^2(\al))r(l^2(\al))E^{n-1} P' E_1 \mod R\langle\langle Q\rangle\rangle_{\operatorname{cyc},{N+1}}
$$
\begin{figure}
	\centering
	\includegraphics{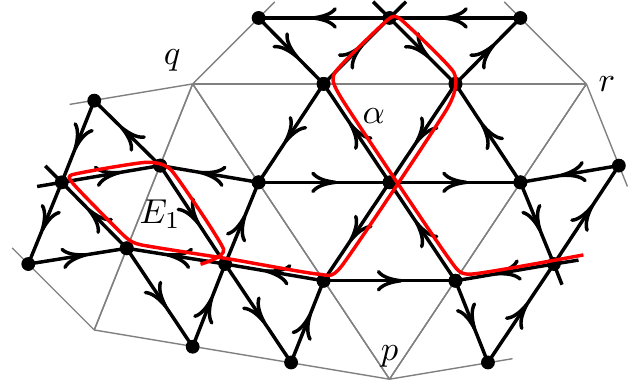}
	\captionof{figure}{Lemma \ref{nononloc}, $C_1$ -- new cycle in $\Phi_4(W)$.}
	\label{nononlocfig2}
\end{figure}
\noindent With $E_1=\overleftarrow{rrr}(l(\al_2))$, another edge cycle (see Fig. \ref{nononlocfig2}).

Note again that during these procedures no new local terms appeared in the expression of the transformed potential. Furthermore, $W -  \Phi_4(W) \mod R\langle\langle Q\rangle\rangle_{\operatorname{cyc},{N+1}}$ consists of two cycles of degree $N$, which means that we replaced $C$ by another cycle, say, $C_1$ of the same degree. Moreover, the power of $E$ appearing in $C_1$ is one less.

It follows that we can repeat the procedure with $C_1$ in place of $C$, until the power of $E$ becomes zero. Then according to the construction we will get:
$$
W - \Phi_{4n}(W) = C - u\cdot r^2(l^2(\al))r(l^2(\al))E^{0} P' E_1 \mod R\langle\langle Q\rangle\rangle_{\operatorname{cyc},{N+1}}
$$
\noindent Denote the second summand by $C_n$, this cycle is not straight. Indeed, recall that by definition of path $P'$, it ends with $f^{-1}(\al)$, so $C_n$ has a fragment $r(l^2(\al))f^{-1}(\al) = l(f^{-1}(\al))f^{-1}(\al)$.

Finally we are in position to use Corollary \ref{nonloc} to get rid of $C_n$ (again, without affecting the local part). And this concludes the proof of Proposition \ref{nononloc}
\end{proof}

\subsection{Invariants of the action of the group of right-equivalences on $R\langle\langle Q\rangle\rangle_{\operatorname{cyc}}$.}

In this section the second part of Theorem \ref{mn3} is proved. As the first step, we show that any right-equivalence class of potentials has a representative that is a sum of a primitive part and at most one extra term (Proposition \ref{onecoeff}). However, the choice of this term is not canonical and such representative is by no means unique. Furthermore, from that construction it is not immediately obvious whether there are any further relations. For example, by analogy with $SL_2$ case one could suspect that any potential is determined up to a right-equivalence by its primitive part \cite{GLS13}.

To address these questions we write down an explicit function that is an invariant of the action of right-equivalences on $R\langle\langle Q\rangle\rangle_{\operatorname{cyc}}$. Its values distinguish $\kk$ different strongly generic potentials with a given class of primitive part (\emph{cf.} \ref{equivariant1}). Moreover, it is evident from the construction that any two strongly generic potentials, with the same value of the invariant, are right-equivalent.
\medskip

\begin{proposition}\label{onecoeff}
Any strongly generic potential $W$ on $Q_{\TT,2}$ is right-equivalent to a potential of the form $W_{\operatorname{prim}} + vC$, where $W_{\operatorname{prim}}$ is the primitive part of $W$, and $C$ is some cycle from Tables \ref{table1}-\ref{table3} taken with coefficient $v\in\emph{\kk}$.
\end{proposition}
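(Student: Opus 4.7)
My plan is to begin from the normal form already established in the paper and then perform a finite combinatorial reduction on the remaining non-local terms.

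Starting point. By Corollary \ref{Wloc} we may assume $W = W_{\operatorname{prim}} + W_{\operatorname{nonloc}}$, with no non-primitive local cycles. Then Proposition \ref{nononloc} allows us to further assume that every cycle entering $W_{\operatorname{nonloc}}$ has degree at most $7$. Since $Q_{\TT,2}$ has only finitely many cycles of bounded degree, the resulting $W_{\operatorname{nonloc}}$ is a linear combination of finitely many explicit cycles, and those cycles are exactly the ones enumerated in Tables \ref{table1}--\ref{table3}. By the remark following Corollary \ref{nonloc}, any non-straight non-local cycle of degree less than $8$ is necessarily of one of two shapes (a ``two black triangles'' pattern $\overleftarrow{rllfl}(\al)$, or a ``triangle plus edge cycle'' pattern $\overleftarrow{ffrffl}(\al)$), so the tables split naturally into these types together with the straight non-local cycles of degree $\leq 7$.

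Reduction between cycles of the same local shape. The heart of the argument is to show that for each ``family'' of short non-local cycles supported near the same edge/triangle of $\TT$, any right-equivalence class meeting that family can be represented by a single cycle from the family. The plan is to mimic the elementary-right-equivalence strategy of Proposition \ref{straight} and Lemma \ref{removeL}: for a pair $C_1, C_2$ of non-local cycles of the same degree with $C_1 = PQ$ and $C_2 = PQ'$ (so $Q,Q'$ are two distinct paths from $s(Q)$ to $t(Q)$ starting and ending at the same vertices as some arrow $\be$ in $P$), apply an elementary right-equivalence of the form $\varphi(\be) = \be - \lambda\, Q$, which trades $C_1$ for $C_2$ modulo cycles of strictly higher degree, while introducing only new non-local terms of higher degree and not disturbing the primitive part or the (already vanishing) local part. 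Corollary \ref{nonloc} then eliminates any non-straight non-local higher-degree byproducts without touching the local part, so the process terminates in the completed path algebra.

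Finishing up. Working through the finite list in Tables \ref{table1}--\ref{table3} with these elementary moves, and organizing them by degree (short cycles first, longer ones last so that newly introduced higher-degree corrections are disposed of by the induction on degree), one finds that the whole linear combination $\sum_j c_j C_j$ collapses to a single scalar multiple $vC$ of a distinguished representative from the tables. The resulting potential has the claimed form $W_{\operatorname{prim}} + vC$.

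Main obstacle. The principal difficulty is bookkeeping: one must verify for each elementary right-equivalence that (i) it does not re-create non-primitive local cycles, so that Corollary \ref{Wloc} is not undone; (ii) the non-local byproducts it creates are either of strictly larger degree (so absorbed by the induction) or non-straight (so removable via Corollary \ref{nonloc} without affecting the local part); and (iii) the ordering of reductions is compatible, i.e., eliminating one family does not undo the elimination of a previously handled one. This is essentially a finite case-check over the combinatorial types listed in Tables \ref{table1}--\ref{table3}, guided by the same cyclic-derivative bookkeeping that governs Lemmas \ref{divE}, \ref{onlyL}, and \ref{removeL}; the strongly generic hypothesis enters only indirectly, through the use of Corollary \ref{Wloc}.
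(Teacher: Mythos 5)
Your preamble --- passing to $W_{\operatorname{prim}} + W^{\circ}_{\operatorname{nonloc}}$ with $W^{\circ}_{\operatorname{nonloc}}$ a combination of nonlocal cycles of degree at most $7$ via Corollary \ref{Wloc} and Proposition \ref{nononloc} --- matches the paper. The collapsing step is where your sketch has a real gap. You claim that a \emph{single} elementary right-equivalence $\varphi(\be) = \be - \lambda Q$ trades one degree-$7$ nonlocal cycle for another while introducing only higher-degree nonlocal byproducts and never disturbing the already-vanishing local part; that is precisely what does not happen. The paper's actual mechanism reuses the \emph{composition} $\Phi = \varphi_{\val(p)}\circ\cdots\circ\varphi_{-2}$ built in the proof of Lemma \ref{removeL}: running it with $n=2$ removes one degree-$7$ cycle $\overleftarrow{rffllf}(\al)$ but necessarily produces a multiple of the \emph{local} cycle $\left(L^{(2)}_p\right)^2$ --- with coefficient $v^{(2)}_p\bigl(v^{(2)}_p + (-1)^{\val(p)}v^{(1)}_p\bigr)$, cf.\ (\ref{removeLfin}) --- together with higher-degree nonlocal corrections. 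The degree-$7$ coefficient is then transported to another cycle $\overleftarrow{rffllf}(\al')$ attached to the same puncture $p$ by applying the analogous composition with opposite sign, so that the two $\left(L^{(2)}_p\right)^2$ byproducts cancel exactly. Your sketch never produces or cancels this local intermediary, and your obstacle item (i) --- that the moves avoid re-creating local cycles --- is exactly what a single elementary move cannot guarantee.

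Two further omissions. First, before collapsing you must promote the degree-$6$ nonlocal cycles $\overleftarrow{rllfl}(\al)$ to degree-$7$ ones; the paper does this with an explicit elementary right-equivalence and disposes of the resulting degree-$\geq 9$ terms via Proposition \ref{nononloc}. Without this step there are two families of short nonlocal cycles, not one. Second, the accumulation of all degree-$7$ coefficients onto a single cycle $vC$ uses connectivity of $\SSS$: each transfer described above is confined to a single puncture's patch, and one then propagates through adjacent punctures along the triangulation. That is a genuine global ingredient, not a ``finite case-check.'' Finally, your side remark that strong genericity enters only through Corollary \ref{Wloc} happens to be consistent with the paper's proof (the opposing $\left(L^{(2)}_p\right)^2$ contributions cancel regardless of whether their common coefficient vanishes), but your argument cannot see this, since it never exhibits those contributions at all.
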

\begin{proof}
As in the previous subsection, Proposition \ref{prim} allows us to assume that the primitive part of $W$ has a form as in~(\ref{prim_std}). Further, Corollary Proposition \ref{nononloc} says that $W$ is right-equivalent to potential $W' = W_{\operatorname{prim}} + W^{\circ}_{\operatorname{nonloc}}$, where the last term consists of nonlocal terms of degree at most $7$.

As noted in the remark after Corollary \ref{nonloc}, we know that cycles in $W^{\circ}_{\operatorname{nonloc}}$ are $\overleftarrow{rllfl}(\al)$ and $\overleftarrow{ffrffl} (\al)$ (here $\al$ is any arrow that belongs to some cycle $L_q^{(1)}$). In fact, after applying suitable right-equivalences one can assume that nonlocal part of $W$ consists of cycles of degree $7$ only. Indeed, if $W$ contains degree $6$ term of form $u\cdot\overleftarrow{rllfl}(\al),\ u\in \kk^{\times}$, then one can apply right-equivalence $\varphi$:
\begin{equation}
\varphi(\beta) = \be - u \cdot \overleftarrow{llf}(\be), \ \ \text{if}\ \ \be = l(\al)
\end{equation}

\begin{figure}
	\centering
	\begin{minipage}[b]{.33\textwidth}
		\centering
		\includegraphics{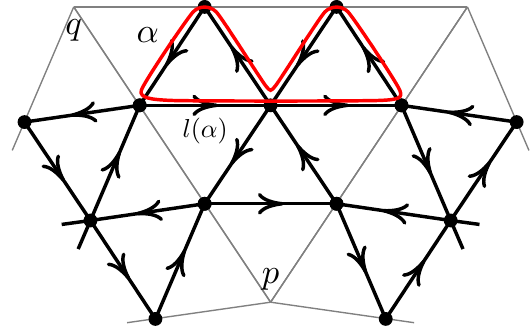}
	\end{minipage}%
	\begin{minipage}[b]{.33\textwidth}
		\centering
		\includegraphics{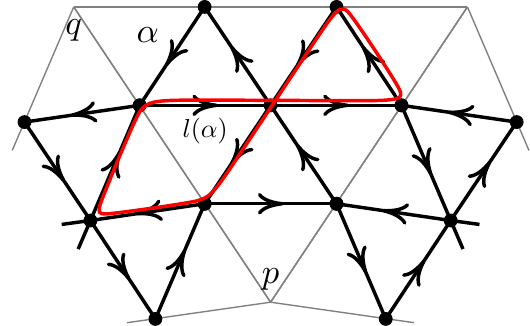}
	\end{minipage}%
	\begin{minipage}[b]{.33\textwidth}
		\centering
		\includegraphics{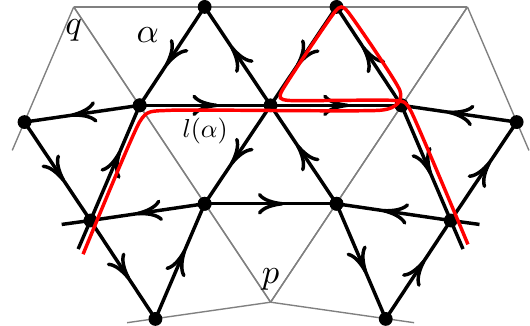}
	\end{minipage}
	\captionof{figure}{Proposition \ref{onecoeff}, eliminating non-local cycles of degree $6$.}
	\label{onecoefffig1}
\end{figure}

\noindent  The difference $W' - \varphi(W')$ is (Figure \ref{onecoefffig1}):
$$
u\cdot\overleftarrow{rllfl}(\al) + u\cdot\overleftarrow{rffllf}(l(\al)) + uv^{(2)}_p\cdot \overleftarrow{llf}(l(\al))L^{(2)}_p + \left(W^{\circ}_{\operatorname{nonloc}} - \varphi(W^{\circ}_{\operatorname{nonloc}})\right)
$$
\noindent Last term consists of nonlocal cycles of degree at least $8$ and can be disregarded by Proposition \ref{nononloc}. Same argument applies to the third summand having degree at leat $9$ (by our assumptions any puncture $p$ in $T$ satisfies $\val(p) \geq 3$). In the resulting potential the term $u\cdot\overleftarrow{rllfl}(\al)$ of initial $W'$ is replaced with $-u\cdot\overleftarrow{rffllf}(l(\al))$. The latter is a nonlocal term of degree $7$. Repeating this procedure for every degree $6$ term, we arrive to a potential with homogeneous nonlocal part of degree $7$. It consists of cycles of the form shown in the middle of Figure \ref{onecoefffig1}.

\smallskip
It suffices to show that by applying suitable right-equivalences, we can ``move'' coefficient of one such cycle to any other. Then, in particular, we can move \emph{all} coefficients to one cycle of our choice. In fact, right-equivalences that we need, were already implicitly present in the proof of Lemma \ref{removeL}. Indeed, transformation $\varphi_{-2}$ with $n = 2$ (see \ref{phi-2}) produces three terms, one of which has form $\overleftarrow{rffllf}(\al)$, and the other two after applying the composition of $\varphi_{-1},\ldots\varphi_{\val(p)}$ give cycle $\left(L^{(2)}_p\right)^{2}$ with nonzero coefficient. More precisely, (see \ref{removeLfin}), we have:
$$W' - \Phi(W') = W_{\operatorname{nonloc}} + \left(v^{(2)}_p \right)^2\left(L^{(2)}_p\right)^{2}  +(-1)^{\val(p)} v^{(1)}_pv^{(2)}_p\left(L^{(2)}_p\right)^{2}$$

Following the proof of Lemma, it is not difficult to check that
$$W_{\operatorname{nonloc}} = -\overleftarrow{rffllf}(\al) + \text{higher degree terms}$$

\noindent And then Proposition \ref{nononloc} allows one to remove everything, except something of the form $\overleftarrow{rffllf}(\al) + v\left(L^{(2)}_p\right)^{2}$ (here $p$ is the puncture to which arrow $r(\al)$ is associated.

But now we can apply this procedure with opposite signs to any other arrow $\al'$ for which $r(\al')$ belongs to $\left(L^{(2)}_p\right)$. Consequently, we get new potential $W''$, such that:
$$W' - W'' = -\overleftarrow{rffllf}(\al) + \overleftarrow{rffllf}(\al')$$

\noindent That enables us to replace coefficient of one nonlocal degree $7$ cycle by a coefficient of another cycle of that form. Since $\SSS$ is connected, we can ``accumulate'' all coefficients in one cycle; the Proposition \ref{onecoeff} is proved.
\end{proof}

We have shown that the space of strongly generic potentials with fixed primitive part modulo right-equivalences (preserving the primitive part) is at most one-dimensional. Now we are going to prove that, in fact, this space is isomorphic to $\mathbb{A}_1(\kk)$.

For that we define a finite-dimensional subspace of \emph{reduced} potentials $\overline{R\langle\langle Q\rangle\rangle}_{\operatorname{cyc}}$, such that any strongly generic potential is right-equivalent to a reduced potential. More specifically, this subspace consists of potentials that are linear combinations of cycles listed in Tables \ref{table1}-\ref{table3}. To describe the residual action of the group of right-equivalences on $\overline{R\langle\langle Q\rangle\rangle}_{\operatorname{cyc}}$, we further define a quotient map $\pi:\GG\twoheadrightarrow\bar{\GG}$ that makes the natural projection $R\langle\langle Q\rangle\rangle_{\operatorname{cyc}}\rightarrow\overline{R\langle\langle Q\rangle\rangle}_{\operatorname{cyc}}$ equivariant with respect to $\pi$. In particular, $\bar{\GG}$ acts on reduced potentials, and two reduced potentials are right-equivalent iff they lie in one orbit of $\bar{\GG}$ action (\emph{cf.} \ref{equivariant1}):

\begin{center}
\begin{equation}\label{equivariant2}
\begin{tikzcd}
\GG\arrow{r}{\pi}&\bar{\GG} \\
R\langle\langle Q\rangle\rangle_{\operatorname{cyc}}\arrow[loop, distance = 45pt]  \arrow{r} & \arrow[loop, distance = 40pt]\overline{R\langle\langle Q\rangle\rangle}_{\operatorname{cyc}}
\end{tikzcd}
\end{equation}
\end{center}

Informally, these statements mean that when studying potentials on $Q_{\TT,2}$ up to right-equivalences, one can consider only an essential ``reduced'' part, because all other terms can be eliminated by the action of $\GG$. Moreover, for understanding equivalence relations between reduced potentials it is enough to consider $\bar{\GG}$ action.
\medskip

As was already mentioned, full list of higher terms allowed in reduced potentials (see Definition \ref{reduced}) is given in tables in the end of the paper. Denote this set by $\mathfrak{C}$. It is divided into three groups: Vertex, Edge and Triangle Terms according to their relative position to the ideal triangulation of $\SSS$.  Note that Item IX.* in the table of Vertex Terms represents a class of cycles of the form:
$$
\overleftarrow{\underbrace{r...r}_{\val (p) -k -1}lr\underbrace{f...f}_{2k+1}r}(\al),\ 2\leq k\leq (\val (p)-2),
$$
\noindent for any puncture $p$ and any $\al$, such that $r(\al)$ belongs to $L^{(2)}_p$.

\begin{definition}\label{reduced}
A potential on $Q_{\TT,2}$ is called \textbf{reduced}, if it has form $W_{\operatorname{prim}}+ W'$, where the first part is the primitive part and the second term $W'$ is a linear combination of cycles from $\mathfrak{C}$. A \textbf{reduced part} of any potential is its projection onto the subspace of $R\langle\langle Q\rangle\rangle_{\operatorname{cyc}}$, spanned by chordless cycles and cycles from from $\mathfrak{C}$.
\end{definition}

Note that all non-local terms allowed in reduced potentials are the two types of ``triangle terms'', and they are precisely non-local cycles, described in the remark after Corollary \ref{nonloc}. The followig proposition follows from the standard logic of our arguments.

\begin{proposition}\label{reducepot}
Any strongly generic potential on $Q_{\TT,2}$ is right-equivalent to its reduced part.
\end{proposition}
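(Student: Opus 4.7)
The plan is to chain together the three reductions already carried out. By Corollary \ref{Wloc}, any strongly generic potential $W$ is right-equivalent to some $W' = W_{\operatorname{prim}} + W'_{\operatorname{nonloc}}$ containing no non-chordless local cycles. Applying Proposition \ref{nononloc} to $W'$ (which now has zero local part) produces a right-equivalent $W'' = W_{\operatorname{prim}} + W''_{\operatorname{nonloc}}$ in which every nonlocal cycle has degree at most $7$. By the explicit description in the remark following Corollary \ref{nonloc}, the only nonlocal cycles in $Q_{\TT,2}$ of degree $\leq 7$ are $\overleftarrow{rllfl}(\al)$ and $\overleftarrow{ffrffl}(\al)$, and these are precisely the Triangle Terms listed in $\mathfrak{C}$. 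Hence $W''$ is supported on chordless cycles together with cycles from $\mathfrak{C}$, so it is a reduced potential in the sense of Definition \ref{reduced}.

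To upgrade this to the stronger claim that $W''$ coincides with the \emph{reduced part} of $W$, i.e.\ with the projection of $W$ onto the span of chordless cycles and $\mathfrak{C}$-cycles, I would inspect each elementary right-equivalence used in the proofs of Corollary \ref{Wloc} and Proposition \ref{nononloc}. Every such $\varphi$ has the form $\varphi(\al) = \al - P$ with $\deg P \geq 2$, so that $\varphi(W) - W$ consists of either the specific ``bad'' cycle being eliminated at that step or strictly longer cycles. A degree induction then shows that the projection onto the reduced subspace is preserved by each individual step modulo higher-order contributions, which are themselves cleaned up at later stages of the convergent infinite composition in the completed path algebra.

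The main (and essentially only) obstacle is this bookkeeping step: verifying that the infinite composition of elementary right-equivalences used above does not drift the projection onto the reduced subspace as it converges. This is routine given the explicit formulas for $\varphi$ recorded in Lemmas \ref{straight}, \ref{onlyL}, \ref{removeL} and Proposition \ref{nononloc}, and it is precisely the analysis that will underlie the equivariant projection diagram \ref{equivariant2} discussed in the subsequent subsection. In particular, the Vertex and Edge Terms appearing in $\mathfrak{C}$ do not need to be introduced by the reduction (they are automatically absent from $W''$ by Corollary \ref{Wloc}); they are included in $\mathfrak{C}$ so that the reduced subspace contains the image of the projection for arbitrary generic potentials and so that the residual action of $\bar{\GG}$ in \ref{equivariant2} can be described uniformly.
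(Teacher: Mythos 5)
Your first paragraph is correct and is precisely the ``standard logic of our arguments'' the paper alludes to: Corollary~\ref{Wloc} kills the local part, Proposition~\ref{nononloc} pushes the nonlocal part down to degree $\leq 7$, and the remark after Corollary~\ref{nonloc} identifies the survivors as the Triangle Terms of $\mathfrak{C}$. That gives $W\sim W''$ for \emph{some} reduced potential $W''$, and this weaker reading is almost certainly what the proposition is intended to say.

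The attempted upgrade to $W''=\pi_{\mathrm{red}}(W)$ (the literal ``reduced part'') does not work, and your own final paragraph already shows why: you observe that $W''$ carries no Vertex or Edge Terms (they are removed by Corollary~\ref{Wloc}), while $\pi_{\mathrm{red}}(W)$ retains whatever Vertex/Edge coefficients $W$ had. So $W''\neq\pi_{\mathrm{red}}(W)$ in general, and the two statements are not the same. The stepwise bookkeeping you propose also fails concretely: the set of cycles being eliminated in Lemma~\ref{removeL} includes $\bigl(L^{(2)}_p\bigr)^2$, which is itself a Vertex Term in $\mathfrak{C}$ of degree $4\val(p)\geq 12$, and removing it creates Triangle Terms of degree~$7$ (the paper points this out explicitly in the proof of Proposition~\ref{onecoeff}, where $\varphi_{-2}$ with $n=2$ produces $\overleftarrow{rffllf}(\al)$). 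So $\varphi(W)-W$ does \emph{not} consist only of the eliminated cycle together with strictly longer cycles; the projection onto the reduced subspace is genuinely altered by these steps, not merely ``modulo higher-order contributions.'' Establishing the literal claim $W\sim\pi_{\mathrm{red}}(W)$ would require either re-running the reduction so as never to touch coefficients of $\mathfrak{C}$-cycles (which forces nontrivial modifications to Lemmas~\ref{onlyL} and~\ref{removeL}), or invoking the equivariance of diagram~\eqref{equivariant2} together with the $\Theta$-classification of $\bar{\GG}$-orbits --- but those are developed \emph{after} Proposition~\ref{reducepot}, so you would need to be careful about circularity.
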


\smallskip

To define group $\bar{\GG}$ controlling the effect of a right-equivalence on the reduced part of a potential, we study properties of cycles in $\mathfrak C$. Recall two cutting operations for a cycle $C$ along its chord $(\al,i,j)$: $\cut_\al C$ and $\al\Rightcircle C$ (Definition \ref{cut}). Direct inspection of Tables \ref{table1}-\ref{table3} gives:

\begin{lemma}\label{cutC}
For any chord $\al$ of $C\in\mathfrak{C}$, the cycle $\cut_{\al}C$ is chordless.
\end{lemma}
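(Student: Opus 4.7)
The plan is to proceed by case analysis on the type of cycle $C\in\mathfrak{C}$, using Proposition \ref{chordless} as the sole criterion for chordlessness: a cycle in $Q_{\TT,2}$ is chordless if and only if it is either a black triangle boundary, a white region boundary, or one of the cycles $L^{(k)}_p$. Thus for each $C\in\mathfrak{C}$ and each chord $\al$ of $C$, it suffices to exhibit $\cut_{\al}C$ as one of these three types.

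First I would enumerate, for each Vertex, Edge and Triangle Term in Tables \ref{table1}-\ref{table3}, the set of chords of $C$. Using the local quiver structure encoded by the bijections $l,r,f$, every chord is determined by a pair of vertices on $C$ joined by an arrow of $Q_{\TT,2}$ not already traversed by $C$. The combinatorics near a puncture $p$ of valence $\val(p)$ guarantees that these chords are localized: they connect points lying on the cycles $L^{(1)}_p$, $L^{(2)}_p$ within the patch $Q_p$, sit on boundaries of adjacent black triangles, or lie on edge cycles between neighboring patches. For each such chord one substitutes $\al$ for the corresponding subpath of $C$ and identifies the remaining loop with one of the three chordless cycle types.

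The straightforward cases are the Edge Terms and Triangle Terms: these have small degree (at most $7$, by the remark after Corollary \ref{nonloc}) and each admits only a handful of chords, so that checking every cut yields a black triangle boundary, a white region boundary, or $L^{(1)}_p$ is immediate from the tables. The main obstacle will be the Vertex Term family IX.*, namely
$$
C_{p,k,\al}=\overleftarrow{\underbrace{r\ldots r}_{\val(p)-k-1}\,l\,r\,\underbrace{f\ldots f}_{2k+1}\,r}(\al),\qquad 2\leq k\leq \val(p)-2,
$$
which depends on $k$ and on the puncture valence. For this family a uniform argument is required: one shows that every chord of $C_{p,k,\al}$ either shortcuts through the interior of a black triangle adjacent to $p$ (so that $\cut_\al C$ is a black triangle boundary), or shortcuts across the $L^{(2)}_p$-portion of $C$ (so that $\cut_\al C$ is $L^{(2)}_p$ or $L^{(1)}_p$). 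This amounts to tracking how the consecutive $r$'s and $f$'s in the symbol of $C_{p,k,\al}$ interact with the map $l$ at the puncture, which is a finite combinatorial check independent of the specific value of $\val(p)$.

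Once all three families are inspected, the conclusion follows. Since the argument is entirely table-driven, no additional conceptual input is needed beyond Proposition \ref{chordless} and the combinatorics of the bijections $l,r,f$ around punctures.
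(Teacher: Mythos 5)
Your proposal is correct and follows essentially the same approach as the paper, which states the lemma as following from ``direct inspection of Tables~\ref{table1}--\ref{table3}''. You simply make explicit what that inspection consists of: invoking the classification of chordless cycles from Proposition~\ref{chordless} and checking each family in $\mathfrak{C}$ (with a uniform combinatorial argument for the $\val(p)$-parameterized family IX.*), which is exactly the verification the paper leaves implicit.
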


Together with Lemma \ref{cutlemma} this immediately shows that unitriangular right-equivalences affect the reduced part of potential only through coefficients of chordless cycles. We want to make this statement more precise. Let $\mathfrak{P}$ be a collection of all paths obtained from $\mathfrak{C}$ by $\al\Rightcircle C$ operation.

\begin{lemma}\label{cutP}
Paths from $\mathfrak{P}$ have no \emph{proper} chords. That is, for $\al_1\al_2...\al_k\in \mathfrak{P}$ a triple $(\al, i, j)$ is a chord of $P$ if and only if $i = 1, j = k$ and $\al$ is the unique arrow from $t(\al_1)$ to $s(\al_k)$.
\end{lemma}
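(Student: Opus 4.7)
The plan is to reduce Lemma \ref{cutP} to the already-established Lemma \ref{cutC} via the observation that a proper chord of $P$ would produce a chord of the cut cycle $\cut_\be C$, contradicting Lemma \ref{cutC}.

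First I fix the set-up. By definition $P = \al \Rightcircle C$ for some $C = \al_1 \ldots \al_n \in \mathfrak{C}$ and some chord $(\al, i, j)$ of $C$, so $P = \al_i \al_{i+1} \ldots \al_j$. The chord $\al$ satisfies $s(\al) = s(\al_j)$ and $t(\al) = t(\al_i)$, hence $\al$ is parallel to the entire path $P$ and thus provides the "trivial" chord of $P$ at its endpoint indices. Since arrow spaces of $Q_{\TT,m}$ are one-dimensional, $\al$ is the unique arrow with these endpoints, which establishes the "if" direction of the lemma.

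For the "only if" direction I argue by contradiction. Suppose $(\be, a, b)$ is a chord of $P$ with $i \leq a < b \leq j$ and $(a, b) \neq (i, j)$. Then $(\be, a, b)$ is also a chord of the cycle $C$, so by Lemma \ref{cutC} the cut cycle $\cut_\be C$ is chordless. The crux is now to exhibit $\al$ itself as a chord of $\cut_\be C$. Writing $v_\ell = s(\al_\ell) = t(\al_{\ell+1})$ for the vertices of $C$, the cycle $\cut_\be C$ is obtained by replacing $\al_a, \al_{a+1}, \ldots, \al_b$ with the single arrow $\be$, and it loses exactly the interior vertices $v_a, v_{a+1}, \ldots, v_{b-1}$ of that segment. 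Because $j \geq b$ and $i - 1 < a$, the endpoints $s(\al) = v_j$ and $t(\al) = v_{i-1}$ both survive in $\cut_\be C$. In that cut cycle, the arrow having target $v_{i-1}$ is $\al_i$ when $a > i$ and $\be$ when $a = i$; similarly, the arrow having source $v_j$ is $\al_j$ when $b < j$ and $\be$ when $b = j$. These two positions coincide only when $a = i$ and $b = j$, which is precisely the excluded trivial case. Hence in every proper-chord case $\al$ realizes a chord of $\cut_\be C$, contradicting Lemma \ref{cutC}.

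The main obstacle is the careful bookkeeping of how vertices and arrows transform under the cut operation, in particular verifying in each of the three proper sub-cases (namely $a > i$ with $b < j$; $a = i$ with $b < j$; and $a > i$ with $b = j$) that the two positions realizing the endpoints of $\al$ in $\cut_\be C$ are genuinely distinct. Beyond this, only Lemma \ref{cutC} and the one-dimensionality of arrow spaces in $Q_{\TT,m}$ are used; no direct inspection of Tables \ref{table1}-\ref{table3} is needed.
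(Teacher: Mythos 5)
Your proof is correct and follows essentially the same route as the paper's: both reduce to Lemma~\ref{cutC} by observing that a proper chord $\be$ of $P$ is also a chord of the ambient cycle $C\in\mathfrak{C}$, and then that $\al$ persists as a chord of $\cut_\be C$, contradicting that cycle's chordlessness. The paper compresses this into a single sentence; yours usefully supplies the vertex bookkeeping that makes the persistence of $\al$ explicit.
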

\begin{proof}
This follows from the previous lemma. Assume for contradiction that a path \\$\al_1\al_2...\al_k=P\in\mathfrak{P}$ contains a proper chord $(\be, i, j)\neq(\al,1,k)$; where $\al$ is the unique arrow from $t(\al_1)$ to $s(\al_k)$. Choose any chordless cycle $C$ through $\al$, such that $C' =(\dd_{\al}C)P$ belongs to~$\mathfrak{C}$. Then by assumption $\cut_{\be}C$ has a chord $\al$, that contradicts the statement of Lemma \ref{cutC}.
\end{proof}

Since arrow spaces of $Q_{\TT,m}$ are at most one-dimensional, we know that any right-equivalence $\ph$ acts as
\begin{equation}\label{req}
\ph(\al) = \lambda_{\al} \al+\sum_{\substack{s(Q)=s(\al)\\ t(Q)=t(\al)\\ \deg Q>1}} c_Q Q,\ \ \ \lambda_{\al}\in \kk^{\times},\ c_Q\in \kk
\end{equation}

We claim that the set of unitriangular right-equivalences for which $c_Q = 0\ \forall Q\in \mathfrak{P}$ forms a normal subgroup $\GG_{\mathfrak{P}^c}\subset \GG$. Indeed, take any right-equivalences $\ph_1$ and $\ph_2$ and write
\begin{equation}
\ph_i(\al) = \lambda_{\al,i}\al+\sum_{\substack{s(P)=s(\al)\\ t(P)=t(\al)\\P\in\mathfrak{P}}} c_{P,i}P+\sum_{\substack{s(Q)=s(\al)\\ t(Q)=t(\al)\\Q\notin\mathfrak{P}}} c_{Q,i}Q, \ \ \ \lambda_{\al,i}\in \kk^{\times};\  c_{P,i},\ c_{Q,i}\in \kk, \ i = 1,2
\end{equation}

\noindent Then the we can compute the composition $\ph_2\circ\ph_1(\al)$:
\begin{equation}\label{reqprod}
\begin{split}
\ph_2\circ\ph_1(\al)  = \lambda_{\al,1}\lambda_{\al,2} \al\ +& \sum_{\substack{s(P)=s(\al)\\ t(P)=t(\al)\\P\in\mathfrak{P}}} P\left(c_{P,2}\lambda_{\al,1} + c_{P,1}\prod_{\al_j\in P} \lambda_{\al_j,2}\right) +\\ &+\sum_{\substack{s(Q)=s(\al)\\ t(Q)=t(\al)\\Q\notin\mathfrak{P}}} Q\left(c_{Q,2}\lambda_{\al,1} + c_{Q,1}\prod_{\al_j\in Q} \lambda_{\al_j,2}\right) + \sum_{\substack{s(R)=s(\al)\\ t(R)=t(\al)}} c_{R,i}'R
\end{split}
\end{equation}

In this expression the second and the third terms are formed by rescaling arrows by $\lambda_{\al,1}$ and then adding paths $c_{Q,2} Q$, or by first adding $c_{Q,1} Q$ and then rescaling all arrows according to $\ph_2$. Hence the fourth term consists of more complicated ``composed'' paths, that were obtained from some path $\prod_j\al_j$ appearing in $\ph_1(\al)$, by applying higher order terms of $\ph_2$ to arrows~$\al_j$. These terms can not belong to $\mathfrak{P}$ by Lemma \ref{cutP}. It follows that the set $\GG_{\mathfrak{P}^c}$ is a subgroup. To see that it is normal, note that in \ref{reqprod} the second summand does not depend on coefficients of paths $Q\notin \mathfrak{P}$.
\medskip

It follows from the discussion above that there is a well-defined group $\bar{\GG} = \GG/ \GG_{\mathfrak{P}^c}$. Moreover, the diagram \ref{equivariant2} is equivariant, since for any right-equivalence $\varphi$ coefficients of cycles from $\mathfrak{C}$ in $\ph(W)$ are affected only by numbers $\lambda_{\al}$ and $c_{P},\ P\in\mathfrak{P}$ from \ref{req}.

\smallskip
One can extract even finer information from \ref{reqprod} and the fact, that the last term in it has no terms from $\mathfrak{P}$. Recall group $\GG^{\operatorname{diag}} \equiv \left(\kk^{\times}\right)^{|Q_1|}$ acting by rescaling arrow spaces (i.e. all $c_Q$ in \ref{req} are zero), and subgroup $\GG^{\operatorname{un}}$ of unitriangular right-equivalences. Denote its image in $\bar{\GG}$ by $\bar{\GG}^{\operatorname{un}}$.

\begin{lemma}\label{semidir}
The quotient group $\bar{\GG} = \GG/ \GG_{\mathfrak{P}^c}$ is a semi-direct product:
$$
\bar{\GG} = \bar{\GG}^{\operatorname{un}} \rtimes \GG^{\operatorname{diag}}
$$
\noindent Moreover, the subgroup $\bar{\GG}^{\operatorname{un}}$ is abelian.
\end{lemma}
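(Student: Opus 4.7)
The plan is to split the proof into two independent parts: first establish the semi-direct product decomposition, then verify the abelianness of $\bar{\GG}^{\operatorname{un}}$ by a direct computation with formula~(\ref{reqprod}).

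For the decomposition, I would start from the observation that $\GG_{\mathfrak{P}^c}$ is by definition contained in $\GG^{\operatorname{un}}$, so $\GG_{\mathfrak{P}^c}\cap \GG^{\operatorname{diag}} = \{e\}$. This immediately gives that the restriction of $\pi:\GG\to\bar\GG$ to $\GG^{\operatorname{diag}}$ is injective, so $\GG^{\operatorname{diag}}$ embeds into $\bar\GG$. Next, $\bar\GG^{\operatorname{un}} = \pi(\GG^{\operatorname{un}})$ is normal in $\bar\GG$ as the image of a normal subgroup under a surjection. To check triviality of the intersection $\bar\GG^{\operatorname{un}}\cap\GG^{\operatorname{diag}}$: if $\varphi\in\GG^{\operatorname{un}}$ and $\psi\in\GG^{\operatorname{diag}}$ satisfy $\pi(\varphi) = \pi(\psi)$, then $\varphi\psi^{-1}\in\GG_{\mathfrak{P}^c}\subset\GG^{\operatorname{un}}$ forces $\psi\in\GG^{\operatorname{un}}\cap\GG^{\operatorname{diag}} = \{e\}$, so $\pi(\varphi)$ is trivial. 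Surjectivity $\bar\GG^{\operatorname{un}}\cdot\GG^{\operatorname{diag}} = \bar\GG$ is then a direct consequence of $\GG = \GG^{\operatorname{un}}\cdot\GG^{\operatorname{diag}}$, and this packages into the claimed semi-direct product $\bar\GG = \bar\GG^{\operatorname{un}}\rtimes\GG^{\operatorname{diag}}$.

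For the abelianness of $\bar\GG^{\operatorname{un}}$, I would take two unitriangular $\varphi_1,\varphi_2$ and expand $\varphi_2\circ\varphi_1(\al)$ using (\ref{reqprod}) specialized to $\lambda_{\cdot,i} = 1$. After this substitution the coefficient of each $P\in\mathfrak{P}$ reads $c_{P,1}+c_{P,2}$, which is manifestly symmetric under $1\leftrightarrow 2$. The only remaining source of a coefficient on a path in $\mathfrak{P}$ could be the ``composed'' residual $\sum_R c'_R R$, but any such $R$ arises by substituting a length-$\geq 2$ higher-order term of $\varphi_2$ into some length-$\geq 2$ path in $\varphi_1(\al)$, hence contains an interior substitution vertex at which the original replaced arrow realizes a \emph{proper} chord of $R$; by Lemma~\ref{cutP} this forces $R\notin\mathfrak{P}$. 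Consequently $\varphi_2\circ\varphi_1$ and $\varphi_1\circ\varphi_2$ agree on coefficients of paths from $\mathfrak{P}$, so $[\varphi_1,\varphi_2]\in\GG_{\mathfrak{P}^c}$ and its class in $\bar\GG$ is trivial.

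The main obstacle I anticipate is the bookkeeping in this last step — cleanly identifying the chord furnished at the substitution vertex so that Lemma~\ref{cutP} can be invoked; the rest is standard group-theoretic formalism built on the existing semi-direct product $\GG = \GG^{\operatorname{un}}\rtimes\GG^{\operatorname{diag}}$ together with the fact that $\GG_{\mathfrak{P}^c}$ was designed to sit inside $\GG^{\operatorname{un}}$.
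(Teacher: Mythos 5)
Your proof is correct and follows the same route the paper implicitly relies on: the paper states Lemma~\ref{semidir} with no displayed proof, expecting it to follow from formula~(\ref{reqprod}) (specialized to $\lambda = 1$ for the unitriangular part) together with Lemma~\ref{cutP}, and your argument spells out exactly those steps, including the observation that the residual ``composed'' term contributes no paths from $\mathfrak{P}$ because each substituted arrow furnishes a proper chord.
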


\medskip
Proposition \ref{onecoeff} implies that the space of reduced potentials $\overline{R\langle\langle Q\rangle\rangle}_{\operatorname{cyc}}$ modulo the action of $\bar{\GG}$ is at most one-dimensional. To prove last statement of Theorem \ref{mn3} it is sufficient to construct a non-trivial $\bar\GG$-invariant function on $\overline{R\langle\langle Q\rangle\rangle}_{\operatorname{cyc}}$.

For that fix strongly generic primitive part $W_{\operatorname{prim}}$ as in \ref{prim_std} and for every puncture $p$ of $\SSS$ define number $$k_p = v^{(1)}_p + (-1)^{\val p}v^{(2)}_p.$$
Observe that these numbers are non-zero by definition of strongly generic potentials. Next, let $e$ be an edge of the ideal triangulation $T$ connecting punctures $p$ and $q$, define numbers:
\begin{equation}\label{thetae}
\theta_e = \frac{v^{(1)}_p}{k_p} - \frac{(-1)^{\val q} v^{(2)}_q}{k_q}=\frac{v^{(1)}_q}{k_q} - \frac{(-1)^{\val p} v^{(2)}_p}{k_p} = \frac{v^{(1)}_pv^{(1)}_q-(-1)^{\val p + \val q}v^{(2)}_pv^{(2)}_q}{k_pk_q}
\end{equation}
This number is the coefficient indicated in the top-right corner of the only Edge Term field in Table \ref{table1}. Similarly, for Vertex and Triangle terms $C\in \mathfrak{C}$ in Tables \ref{table1}-\ref{table3} let $\theta_C$ be the number indicated in top-right corner of the corresponding field in these tables.

\begin{proposition}\label{Theta}
Linear functional $\Theta$ on the space of reduced potentials $\overline{R\langle\langle Q\rangle\rangle}_{\operatorname{cyc}}$ whose value on potential $W = W_{\operatorname{prim}} + \sum_{C\in \mathfrak{C}} u_C C$ is given by:
\begin{equation}
\Theta(W) = \sum_{C\in \mathfrak{C}} \theta_C u_C
\end{equation}
is invariant under the action of $\bar{\GG}^{\operatorname{un}}$.
\end{proposition}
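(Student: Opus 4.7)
The plan is to exploit the semi-direct product structure from Lemma \ref{semidir}: since $\bar\GG^{\un}$ is abelian, it suffices to verify $\Theta(\varphi(W))=\Theta(W)$ on a generating set. Reading off the quotient $\bar\GG = \GG/\GG_{\mathfrak{P}^c}$ from the multiplication formula (\ref{reqprod}), every class in $\bar\GG^{\un}$ is represented by a product of elementary unitriangular right-equivalences of the form $\varphi_P:\beta_P\mapsto\beta_P-P$ for some $P\in\mathfrak P$, where $\beta_P$ is the unique arrow with $s(\beta_P)=s(P)$, $t(\beta_P)=t(P)$ (uniqueness because arrow spaces in $Q_{\TT,2}$ are one-dimensional). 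Elementary right-equivalences along paths $Q\notin\mathfrak P$ represent the identity in $\bar\GG$. So it is enough to fix a single $P\in\mathfrak P$ and show invariance under $\varphi_P$.

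Apply Lemma \ref{cutlemma} to $\varphi_P$ acting on a cycle $C\in\mathfrak C$. Because $\varphi_P(\alpha)=\alpha$ for $\alpha\neq\beta_P$ and each $\beta_k\Rightcircle C$ has length $\geq 2$, only collections in which every $\beta_k$ equals $\beta_P$ contribute. Multi-chord collections do not occur: by Lemma \ref{cutP} no proper chord of a path in $\mathfrak P$ exists, so $\beta_P$ appears as a chord of $C$ in at most one way. The single-chord term yields
\begin{equation*}
\varphi_P(W)[C]-W[C]=\begin{cases}-W[\cut_{\beta_P}C]&\text{if }\beta_P\text{ is a chord of }C\text{ with }\beta_P\Rightcircle C=P,\\ 0&\text{otherwise.}\end{cases}
\end{equation*}
By Lemma \ref{cutC} the cut $\cut_{\beta_P}C$ is chordless, so by the normalization (\ref{prim_std}) its coefficient in $W_{\prim}$ is either $1$ (for a black cycle or a white cycle different from $L_p^{(1)}$) or $v_p^{(k)}$ (for a cycle $L_p^{(k)}$). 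Summing against $\theta_C$ gives
\begin{equation*}
\Theta(\varphi_P(W))-\Theta(W)=-\sum_{\substack{C\in\mathfrak C\\ \beta_P\Rightcircle C=P}}\theta_C\cdot W_{\prim}[\cut_{\beta_P}C].
\end{equation*}

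What remains is to verify that this sum vanishes for every $P\in\mathfrak P$. This is a finite check. Each $P$ sits in a small neighbourhood of a vertex, an edge, or a triangle of $\TT$, and only finitely many cycles $C\in\mathfrak C$ can complete $P$ to an element of $\mathfrak C$ via the chord $\beta_P$; the possible $\cut_{\beta_P}C$ are then explicit chordless cycles, whose coefficients lie in $\{\pm 1\}\cup\{v_p^{(k)}\}$. The coefficients $\theta_C$ in Tables \ref{table1}--\ref{table3} are engineered so that each resulting identity in the $v_p^{(k)}$ and the numbers $k_p=v_p^{(1)}+(-1)^{\val p}v_p^{(2)}$ is satisfied; the edge-type instances reduce to the two equivalent expressions of $\theta_e$ in (\ref{thetae}) and its immediate consequence, while the vertex and triangle instances become analogous algebraic identities localized at a puncture or a triangle. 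Strong genericity of $W$, i.e.\ $k_p\neq 0$ for every $p$, is precisely what makes the $\theta_C$ well-defined and each identity meaningful. The main obstacle is purely combinatorial bookkeeping: organizing $\mathfrak P$ by path type and matching each resulting linear relation against the entries in the tables, a direct but tedious case analysis that no single case obstructs.
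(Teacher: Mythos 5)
Your approach is essentially the paper's: reduce via Lemma~\ref{semidir} to elementary unitriangular generators $\varphi_P$ with $P\in\mathfrak P$, then analyze how $\varphi_P$ affects the coefficients of cycles in $\mathfrak C$. Your derivation of the closed-form expression
$\Theta(\varphi_P(W))-\Theta(W)=\mp\sum_{C:\,\beta_P\Rightcircle C=P}\theta_C\, W_{\prim}[\cut_{\beta_P}C]$
via Lemmas~\ref{cutlemma}, \ref{cutC}, \ref{cutP} is a tidier packaging of what the paper computes case by case (note the paper's generators $\ph_{P,v}$ use the opposite sign convention $\al\mapsto\al+vP$, but this does not affect vanishing), yet the actual content of the proposition is the vanishing of that sum for each $P$ against the tabulated $\theta_C$, and you assert rather than verify it: the paper's proof consists almost entirely of carrying out that finite check (three representative cases explicitly, then the uniform ``two cycles in one patch'' observation for the remainder), so what you have is a correct reduction with the decisive verification still deferred.
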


\noindent\emph{Remark:} Modulo this proposition to finish the proof of Theorem \ref{mn3}, it remains to show that right-equivalences from $\GG^{\operatorname{diag}}$ that preserve the primitive part \ref{prim_std} act trivially on cycles from $\mathfrak{C}$. This is done in Lemma \ref{diageq}.

\begin{proof}
By Lemma \ref{semidir} group $\bar{\GG}^{\operatorname{un}}$ is an additive abelian group. It is generated by right-equivalences $\ph_{P,v},\ P\in \mathfrak{P}, v\in \kk^{\times}$ defined by:
\begin{equation}
\begin{cases}
\ph_{P,v}(\al) = \al + vP, \ \ \ &\text{if } \al \text{ is the unique arrow from } s(P) \text{ to } t(P)\\
\ph_{P,v}(\al) = \al,  \ \ \ &\text{otherwise.}
\end{cases}
\end{equation}

Thus, it is sufficient to check that $\Theta$ is preserved by right-equivalences of the form $\ph_{P,v}$ with. Without loss of generality we assume$v = 1$ (and to simplify notation we will omit the second subscript). Verifying this can be done by hand via inspection of tables in the end of the paper. We provide the computation in some illustrative cases here. Throughout these computations, we write a potential as a sum of its primitive part and remaining terms:
$$W = W_{\operatorname{prim}} + \sum_{C\in \mathfrak{C}} u_C C$$

\smallskip
\noindent \emph{Case 1:} $P = \overleftarrow{rrrf}(\al)$, where $\al$ belongs to some cycle of form $L^{(2)}_p$, and satisfies $r(\al) \neq f(\al)$. Note that in this case $P$ is a subpath only for the following three cycles from $\mathfrak{C}$:
\begin{enumerate}
\item $C_1 =\overleftarrow{lfrrrf}(\al)$ (type TI);
\item $C_2 =\overleftarrow{rrlrrrf}(\al)$ (type VII);
\item $C_3 = L^{(2)}_p\overleftarrow{rrr}(f(\al))$ (type XI).
\end{enumerate}

\noindent Then we have: $\pi(\varphi_P(W) - W) = C_1 + C_2 + v^{(2)}_p C_3$ (here and further $\pi$ denotes projection on the reduced part). It follows:
\begin{equation}\label{comp1}
\Theta(\pi(\varphi_P(W))) = \sum_{C\in \mathfrak{C}} \theta_C \cdot u_C + \theta_{C_1} \cdot1 + \theta_{C_2} \cdot1 + \theta_{C_3} \cdot v^{(2)}_p = \Theta(W) + \theta_{C_1} + \theta_{C_2} + v^{(2)}_p \theta_{C_3}
\end{equation}

\noindent Hence, we need to check that last three terms add up to zero, which is indeed true:
\begin{equation}\label{comp2}
\theta_{C_1} + \theta_{C_2} + v^{(2)}_p \theta_{C_3} = -1 + \frac{v^{(1)}_p}{k_p}+v^{(2)}_p\frac{(-1)^{\val p}}{k_p} = 0
\end{equation}

\smallskip
\noindent \emph{Case 2:} $P = \overleftarrow{llr}(\al)$ for some $\al$ with $r(\al)\neq f(\al)$. Computations are similar to the previous case. Path $P$ appears in the following cycles from $\mathfrak{C}$:
\begin{enumerate}
\item $C'_1 = \overleftarrow{lfllr}(\al)$ (Type TII);
\item $C'_2 = \overleftarrow{rrlllr}(\al)$ (Type V);
\item $C'_3 = L^{(2)}_p\overleftarrow{ll}(r(\al))$ (Type X).
\end{enumerate}

\noindent Then $\pi (\ph_p(W) - W) = C'_1 + C'_2 + v^{(2)}_p C'_3$, and:
\begin{equation}\label{comp3}
\Theta(\pi(\varphi_P(W))) = \sum_{C\in \mathfrak{C}} \theta_C \cdot u_C + \theta_{C'_1} \cdot1 + \theta_{C'_2} \cdot1 + \theta_{C'_3} \cdot v^{(2)}_p = \Theta(W) + \theta_{C'_1} + \theta_{C'_2} + v^{(2)}_p \theta_{C'_3}
\end{equation}

\noindent So \ref{comp1} and \ref{comp2} are almost identical, with the only difference in cycles taken. Invariance of~$\Theta$ in this case amounts to:
\begin{equation}
\theta_{C'_1} + \theta_{C'_2} + v^{(2)}_p \theta_{C'_3} = 1 - \frac{v^{(1)}_p}{k_p} - v^{(2)}_p\frac{(-1)^{\val p}}{k_p} = 0
\end{equation}

\medskip
\noindent \emph{Case 3:} $P = \overleftarrow{rrrr}(\al)$ for an arrow $\al$, such that $\al$ belongs to $L^{(2)}_p$, and $r(\al)$ belongs to $L^{(2)}_q$ with $p\neq q$. There are three cycles from $\mathfrak{C}$ containing $P$:
\begin{enumerate}
\item $C''_1 = \overleftarrow{rrrrrrr}(\al)$ (Type E);
\item $C''_2 \overleftarrow{llrrrr}(\al)$ (Type V);
\item $C''_3 = L^{(2)}_p\overleftarrow{rrr}(\al)$ (Type XI).
\end{enumerate}

\noindent Note that in this case cycles $C''_2$ and $C''_3$ belong to different patches, we call them $q$- and $p$-patches respectively. Then we get: $\pi (\ph_P(W) - W) = C''_1 + C''_2 + v^{(2)}_p C''_3$.

Let $e$ be the edge of triangulation between punctures $p$ and $q$, such that the edge cycle $\overleftarrow{rrrr}(\al)$ is associated to $e$. For the invariance we need to check that new contributions in $\Theta(\ph_P(W))$ add up to zero:
\begin{equation}
\theta_{C''_1} + \theta_{C''_2} + v^{(2)}_p \theta_{C''_3} = \theta_e - \frac{v^{(1)}_q}{k_q}
+v^{(2)}_p\frac{(-1)^{\val p}}{k_p} = 0,
\end{equation}
\noindent where the last equality follows directly from the definition of $\theta_e$ (see \ref{thetae}).

\medskip
We have checked invariance of $\Theta$ under elementary right-equivalences of the form $\ph_P,\ P\in\mathfrak{P}$ in three cases. It is not difficult to see directly from Tables \ref{table1}-\ref{table3} that in all remaining cases the following two simplifying properties hold:
\begin{enumerate}[label=(\roman*)]
\item $\pi (\ph_P(W) - W)$ consists of exactly two cycles $C_1$ and $C_2$;
\item cycles $C_1$ and $C_2$ belong to the same patch.
\end{enumerate}

\noindent Using these two properties, proving invariance of $\Theta$ in remaining cases becomes an elementary check since there are always two terms in $\Theta(\pi (\ph_P(W) - W))$ that cancel out. We do not write down all computations and rather give one sample instance of this. Remaining checks are completely analogous.

\smallskip
Let $P = \overleftarrow{rfffr}(\al)$ for some $\al$ with $r(\al)\neq f(\al)$. Then cycles from $\mathfrak{C}$ containing subpath $P$ are:
\begin{enumerate}
\item $C_1 = \overleftarrow{lrrfffr}(\al)$ (Type VII);
\item $C_2 = \overleftarrow{\underbrace{r...r}_{\val p - 2}lrfffr}(\al)$ (Type VIII).
\end{enumerate}

\noindent Here $p$ is the puncture, such that $C_1$ and $C_2$ lie in $p$-patch. Then
$\pi (\ph_P(W) - W) = C_1 + v^{(1)}_pC_2$ and:
\begin{equation}
\Theta(\pi (\ph_P(W) - W)) = \theta_{C_1} + v^{(1)}_p\theta_{C_2} = \frac{v^{(1)}_p}{k_p} + v^{(1)}_p\frac {-1}{k_p} = 0
\end{equation}
\noindent The invariance follows.
\smallskip

\noindent The rest of the details in the proof of Proposition \ref{Theta} is left for the reader.
\end{proof}

\medskip
Recall the decomposition $\bar{\GG} = \bar{\GG}^{\operatorname{un}} \rtimes \GG^{\operatorname{diag}}$  from Lemma \ref{semidir}. Previous proposition shows invariance of $\Theta$ under the unitriangular part of the group of right-equivalences. To conclude the proof of Theorem \ref{mn3}, it remains to deal with right-equivalences acting by scalars on arrow spaces. Proposition \ref{prim} allows to reduce the primitive part of any potential to the standard form \ref{prim_std} using the action of $\GG^{\operatorname{diag}}$. Hence we can consider only the action of this subgroup preserving the standard form of the primitive part.

\begin{lemma}\label{diageq}
Let $\bar{\GG}^{\circ}\subset \GG^{\operatorname{diag}}$ be the subgroup, that acts trivially on primitive parts of potentials. Then it preserves coefficients of cycles from $\mathfrak{C}$.
\end{lemma}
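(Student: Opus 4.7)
My plan is to reinterpret the lemma in terms of the CW complex $\mathcal{C}(\TT,2)$ introduced in Section~\ref{primdefsec}, so that it becomes a purely topological statement.

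First I would identify $\GG^{\operatorname{diag}}\simeq(\kk^\times)^{|Q_1|}$ with the group of cellular $1$-cochains $C^1(\mathcal{C}(\TT,2),\kk^\times)$. A diagonal right-equivalence $\lambda=(\lambda_\al)$ acts on any cyclic path $C$ by multiplication by the character
\[
  \chi_\lambda(C)=\prod_{\al\in Q_1}\lambda_\al^{n_\al(C)},
\]
where $n_\al(C)$ is the multiplicity of $\al$ in $C$. Since chordless cycles appear in the standard primitive part~\eqref{prim_std} with nonzero coefficients, the subgroup $\bar\GG^\circ$ is precisely the set of $\lambda$ with $\chi_\lambda(C_0)=1$ for every chordless $C_0$. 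Because the chordless cycles are exactly the attaching maps of the $2$-cells of $\mathcal{C}(\TT,2)$, this says $\bar\GG^\circ = Z^1(\mathcal{C}(\TT,2),\kk^\times)$.

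Next I would invoke the standard pairing between cellular cochains and chains: if $\lambda\in Z^1$ and $[C]\in\operatorname{Im}(\partial_2)$, write $[C]=\sum_i n_i[C_{0,i}]$ as a $\ZZ$-linear combination of boundaries of $2$-cells (so each $C_{0,i}$ is chordless); then
\[
  \chi_\lambda(C)=\prod_i\chi_\lambda(C_{0,i})^{n_i}=1.
\]
Hence it suffices to prove that every $C\in\mathfrak{C}$ satisfies $[C]=0$ in $H_1(\mathcal{C}(\TT,2),\ZZ)$.

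Finally I would verify this by inspection of Tables~\ref{table1}--\ref{table3}. Note that $H_1(\mathcal{C}(\TT,2),\ZZ)=H_1(\SSS,\ZZ)$ is torsion-free, since $\mathcal{C}(\TT,2)$ is obtained from $\SSS$ by attaching $d$ disks along the null-homotopic loops $L_p^{(2)}$. Every cycle $C\in\mathfrak{C}$ is supported in a contractible part of $\mathcal{C}(\TT,2)$: Vertex Terms live inside a single patch $Q_p$ (which together with the $L_p^{(2)}$-disk is contractible), Edge Terms live in a neighborhood of one edge of $\TT$, and Triangle Terms live in a pair of triangles sharing a vertex. For concreteness, the two Triangle Term types $\overleftarrow{rllfl}(\al)$ and $\overleftarrow{ffrffl}(\al)$ mentioned in the remark after Corollary~\ref{nonloc} are concatenations at a single vertex of two chordless cycles, so $[C]$ equals the sum of the corresponding two chordless $1$-chains; the remaining entries of the tables admit analogous explicit decompositions into sums of black triangles, white regions, edge cycles, $L_p^{(1)}$ and $L_p^{(2)}$. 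The main technical content is this case-by-case bookkeeping through the tables, which is the sole obstacle; there is no conceptual difficulty because each $C\in\mathfrak{C}$ lies in a contractible subcomplex of $\mathcal{C}(\TT,2)$ and hence bounds an explicit integral $2$-chain there.
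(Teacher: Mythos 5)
Your proof is correct and follows essentially the same strategy as the paper: identify $\bar{\GG}^{\circ}$ with $1$-cocycles of $\mathcal{C}(\TT,2)$, and then use the fact that every cycle in $\mathfrak{C}$ is null-homologous in $\mathcal{C}(\TT,2)$. The only cosmetic difference is that you pair a cocycle directly against the boundary $2$-chain, whereas the paper first quotients by coboundaries and then evaluates $H^1$ generators via intersection numbers of dual loops on $\SSS$ — both amount to the same topological observation that cycles in $\mathfrak{C}$ are contractible $1$-chains on $\SSS\subset\mathcal{C}(\TT,2)$.
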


\begin{proof}
From the description of the space of primitive potentials, we know that the $\GG^{\operatorname{diag}}$ can be identified with $1$-cochains in $\mathcal{C}(\TT,m)$, the complex which has $1$-skeleton $Q_{\TT,m}$ constructed in section \ref{primpotsec}. It is immediate from the same description that $\bar{\GG}^{\circ}$ is identified with $1$-cocycles in this description. Moreover, cocycles representing zero cohomology class in $H^1(\mathcal{C}(\TT,m), \kk^{\times})$ act trivially on any potential.

It remains to check the statement for a set of cocycles generating $H^1(\mathcal{C}(\TT,m), \kk^{\times}) \equiv H^1(\SSS, \kk^{\times})$. This can be done using intersection pairing on $\SSS$. Namely,  all nontrivial cohomology classes are represented by $2g$ loops on $\SSS$ transverse to $Q_{\TT,m}$ via counting intersection index with a given arrow of the quiver. Since all cycles from $\mathfrak{C}$ are contractible as 1-chains on $\SSS\subset\mathcal{C}(\TT,m)$, their intersection index with any loop on the surface is zero.
\end{proof}
The combination of \ref{onecoeff}, \ref{Theta} and \ref{diageq} concludes the proof of Theorem \ref{mn3}.

\section{Open 3d Calabi-Yau manifolds from points of Hitchin base.} In this section we describe a class of open $3d$ manifolds $Y_\Phi$ that generalize the construction of Smith \cite{S13} to higher rank cases. We prove that these manifolds have holomorphically trivial canonical class and compute their homology and cohomology groups. Finally, we propose a construction of topological $3$-spheres that conjecturally must represent objects of Fukaya categories associated to $Y_\Phi$. Part of these ideas also appears in~\cite{KS13}

\smallskip
In what follows we fix a complex curve $S$ of genus $g$ with $d$ distinct marked points. Degree~$d$ divisor of marked points is denoted $D$.  We show that the rank of $H^3(Y_\Phi, \QQ)$ equals the number of vertices of quiver $Q_{\TT,m}$ defined in Section~\ref{clusterstr}. Furthermore, the second cohomology group $H^2(Y_\Phi, \QQ)$ has rank $md+1$.

Informally, one can think that for the choice of K\"ahler form on $Y_\Phi$ there are $m$ parameters for every point of $D$ plus a parameter for the area form on $S$. We conjecture that for every choice of symplectic structure there is a full subcategory of the corresponding Fukaya category that is equivalent to the category of finite-dimensional modules over Ginzburg algebra $\Gamma(Q_{\TT, m}, W)$ for the specific choice of parameters for the equivalence class of the primitive part of the potential $W$. The latter is governed by Theorem~\ref{mn1}(b) but a more geometric argument is necessary to make the matching of parameters precise. 

\smallskip
By contrast with Smith's construction, where $Y_\Phi$ is realized as a quadric fibration over~$S$, in our approach~$Y_\Phi$ also appears as a conic fibration over $2d$ surface $T_\Phi$. This surface is Zariski open in a blow-up of $\Tot{K_S(D)}$, the total space of the twisted canonical line bundle of $S$.

\subsection{Case $m=1$: Smith's construction of $3d$ Calabi-Yau manifolds}\label{CYfoldsm1} We recall Smith's construction of $Y_\varphi$. Let $\varphi_2$ be a meromorphic quadratic differential on $S$ with poles of order two at points of $D$ and simple zeroes. In other words, $\varphi_2$ is a generic section in $H^0\left(S, \left(K_S(D)\right)^{\otimes 2}\right)$. Further, fix rank two vector bundle $\mathcal{V}$ over $S$ satisfying $\det\mathcal{V} \simeq K_S(D)$. The manifold $Y_{\varphi_2}$ is realized inside of the total space of rank three vector bundle $\mathcal W$ over $S$ that fits in a short exact sequence:
\begin{equation}
  0\longrightarrow \mathcal{W} \overset{\al}\longrightarrow \Sym^2\mathcal V\longrightarrow  i_*(\underline{\CC}_D) \longrightarrow 0,
\end{equation}
where the last term is just a direct sum of one-dimensional skyscraper sheaves at points of $D$. The first arrow $\al$ must satisfy an additional condition that will be explained momentarily. Following Smith if that is the case we call such $\al$ ``an elementary modification'' of $\Sym^2\mathcal{V}$. Note that the map $\alpha$ is an isomorphism away from divisor $D$, and for every $p\in D$ it is an embedding of subspace $\mathcal{W}_p\subset\left(\Sym^2\mathcal{V}\right)_p$ of codimension one.

Consider the composition of $\alpha$ with the determinant map:
\begin{equation}\label{detmap}
  \mathcal{W}\overset{\al}\longrightarrow\Sym^2\mathcal{V}\overset{\det}\longrightarrow \left(\Lambda^2 \mathcal V \right)^{\otimes 2} \simeq \left( K_S(D)\right)^{\otimes 2}
\end{equation}
Aforementioned condition on the elementary modification $\al$ requires that at every point $p\in D$ the preimage of any nonzero vector from $\left(K_S(D)\right)^{\otimes 2}$ is a union of two parallel planes in the fiber $\mathcal{W}_p$.

\noindent\emph{Remark:} It is easy to show that this condition is the same as the requirement that $\forall p \in D$ the $2$-plane $\al(\mathcal{W}_p)$ is tangent to the cone of decomposable tensors in the  fiber $\left(\Sym^2\mathcal V\right)_p$. In turn, this is equivalent to a choice of a point in $\PP(\mathcal V_p)$.

\medskip
In the case when there is a splitting $\mathcal V \simeq \mathcal L_1 \oplus \mathcal L_2$ one can choose elementary modification~$\al$ in a particularly simple way. Observe that there is an isomorphism:
\begin{equation}\label{decomposition}
\Sym^2\mathcal V \simeq \left(\mathcal L_1\right)^{\otimes 2}\oplus \mathcal L_1\mathcal L_2 \oplus\left(\mathcal L_2\right)^{\otimes 2},
\end{equation}
and we can choose local trivializations $s_{1,2}$ of $\mathcal L_{1,2}$. Then the determinant map can be written as $(a,b,c)\mapsto ac - b^2$, where $a,b,c$ correspond to the coefficients of $s_1\otimes s_1$, $s_1 \otimes s_2$ and $s_2 \otimes s_2$. Thus, there is an elementary modification $\mathcal W \overset{\al}\longrightarrow\mathcal \Sym^2\mathcal V$ formed by twisting the first term in (\ref{decomposition}) by $\mathcal O(-D)$. Corresponding $\al$ is the multiplication of the first summand by a section $\delta \in H^0(S,\mathcal O(D))$ vanishing precisely at points of divisor $D$. According to this construction, at every point of $D$ the subspace $\al(\mathcal W_p)$ is given by $b = 0$.

\medskip
Smith defines $Y_{\varphi_2}$ as the preimage of the section $\varphi_2 \in \Tot\left(K_S(D)\right)^{\otimes 2}$ under the fiberwise quadratic map $\det\circ\alpha$ as in (\ref{detmap}). In other words, it is defined by the equation:
\begin{equation}\label{Smitheq}
  (\delta a)c = b^2 + \varphi_2,
\end{equation}
where $a\in H^0(S, \mathcal L_1^{\otimes 2}(-D))^\vee,\ c\in H^0(S, \mathcal L_2^{\otimes 2})^\vee,\ b \in H^0(S,\mathcal L_1 \mathcal L_2)^\vee$.

The following proposition follows immediately from the construction.
\begin{proposition}
  The fiber of the projection map $\pi: Y_{\varphi_2} \rightarrow S$ over $x\in S$ inside of $\mathcal W_x$ is:
  \begin{itemize}
    \item union of two parallel planes if $x \in D$;
    \item cone $\delta(x)ac = b^2$ if $x$ is a zero of $\varphi_2$;
    \item smooth quadric, otherwise.
  \end{itemize}
\end{proposition}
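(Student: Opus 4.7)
The plan is to evaluate the defining equation $(\delta a) c = b^2 + \varphi_2$ pointwise at each $x \in S$ in the fiber coordinates $(a, b, c)$ on $\mathcal{W}_x \cong \CC^3$, and classify the resulting affine quadric by whether $\delta(x)$ and $\varphi_2(x)$ vanish. The three regimes in the statement correspond precisely to these three cases, so the proof naturally splits into three short arguments.

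First I would handle the case $x \in D$. Here $\delta(x) = 0$, reducing the fiber equation to $b^2 = -\varphi_2(x)$. The key observation is that the genericity assumption on $\varphi_2$ forces $\varphi_2(x) \neq 0$ in the fiber $K_S(D)^{\otimes 2}|_x$ (equivalently, the underlying meromorphic differential has a genuine pole of order $2$, not lower order, at $x$). Given this, the equation pins $b$ to two square roots $\pm \sqrt{-\varphi_2(x)}$ while $a$ and $c$ remain free, producing two parallel affine $2$-planes in $\mathcal{W}_x$.

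Next I would address $x \notin D$, where $\delta(x) \neq 0$. After absorbing the nonzero scalar $\delta(x)$ into the coordinate $a$, the fiber becomes the level set $\{ac - b^2 = \varphi_2(x)\}$ in $\CC^3$. When $\varphi_2(x) = 0$ this is the ordinary quadric cone $\delta(x) a c = b^2$ appearing in the statement. When $\varphi_2(x) \neq 0$, smoothness follows from the Jacobian criterion: the gradient of $ac - b^2$ vanishes only at the origin, which does not lie on the level set, so the fiber is a smooth affine quadric.

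The only step that requires genuine care is the nonvanishing of $\varphi_2$ at points of $D$, since this is a genuine genericity condition on the meromorphic differential rather than an automatic consequence of $\varphi_2 \in H^0(S, K_S(D)^{\otimes 2})$. I would record this explicitly as part of Smith's standing genericity hypothesis (already implicit earlier); the remainder of the proof is then elementary linear and quadratic algebra in a fixed three-dimensional fiber.
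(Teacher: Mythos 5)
Your proof is correct and takes essentially the same route the paper intends: the paper dispatches the proposition in one line (``follows immediately from the construction''), and your three-case pointwise evaluation of $(\delta a)c = b^2 + \varphi_2$ in $\mathcal{W}_x$ is exactly the verification being alluded to. The one place you are more careful than the paper is in flagging that $\varphi_2$ must be nonvanishing in the fiber $\left(K_S(D)^{\otimes 2}\right)_x$ over each $x\in D$ for the ``two parallel planes'' conclusion to hold; this is indeed part of the standing genericity hypothesis (the paper requires poles of order exactly two at $D$ and simple zeroes, which in particular makes the zero locus of $\varphi_2$ disjoint from $D$), and it is worth making explicit as you do. A minor notational slip: after you absorb $\delta(x)$ into $a$ the cone reads $ac=b^2$ rather than $\delta(x)ac=b^2$; either form is fine, just keep the coordinate change consistent within a single sentence.
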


\subsection{Construction of manifolds $Y_\Phi$ for $m\geq 1$} Now we pass to the discussion of higher rank case. The input to the construction is a generic point $\Phi$ of Hitchin base:
\begin{equation}\label{HitchinPhi}
  \Phi = (\varphi_2, \varphi_3, \ldots, \varphi_{m+1}) \in \mathcal B_{S, D, m}\equiv \bigoplus_{k = 2}^{m+1}H^0\left(S, \left(K_S(D)\right)^{\otimes k}\right)
\end{equation}
Given such $\Phi$, we define a polynomial map from $H^0(S, K_S(D))$ to $H^0\left(S, \left(K_S(D)\right)^{\otimes (m+1)}\right)$ by the standard formula:
\begin{equation}\label{Phimap}
  \Phi(b) = b^{m+1} + \varphi_2b^{m-1} + \ldots + \varphi_{m+1} = \sum_{k = 0}^{m+1}\varphi_k b^{m-k+1}
\end{equation}
Here we abused notation by using same letter $\Phi$ for the polynomial map and assuming $\varphi_0 = 1$, $\varphi_1 = 0$. In particular, the preimage of the zero section defines a spectral curve:
\begin{equation}\label{spectraldef}
\Sigma \equiv \{ b\in K_S(D) \mid \Phi(b) = 0\} \subset \Tot{K_S(D)}
\end{equation}

We will need the following
\begin{lemma}\label{spectralgenus}
  Let $\Phi$ be a generic point of $B_{S, D, m}$, then the spectral curve $\Sigma$ is smooth with genus given by the formula:
  \begin{equation}
    g(\Sigma) = (m+1)^2(g-1) + \frac{m(m+1)}{2}d + 1
  \end{equation}
\end{lemma}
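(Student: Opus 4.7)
My plan is to prove smoothness by a transversality/Bertini-style argument and then compute the genus via Riemann--Hurwitz applied to the degree-$(m+1)$ projection $\pi \colon \Sigma \to S$.

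First, smoothness. The spectral curve $\Sigma \subset \Tot K_S(D)$ is cut out by a single equation $F(z,b) = b^{m+1} + \varphi_2(z) b^{m-1} + \ldots + \varphi_{m+1}(z) = 0$ in local coordinates, where $z$ is a coordinate on $S$ and $b$ is a fiber coordinate on $K_S(D)$. A singular point $(z_0, b_0)$ would require simultaneously $F = 0$, $\partial_b F = (m+1)b_0^m + (m-1)\varphi_2(z_0)b_0^{m-2} + \ldots = 0$, and $\partial_z F = \sum_{k\geq 2} \varphi_k'(z_0)\, b_0^{m-k+1} = 0$. The first two conditions say exactly that $b_0$ is a multiple root of the polynomial $\Phi|_{z_0}$ in $b$, equivalently that $z_0$ lies in the zero locus of the discriminant $\Delta(\Phi) \in H^0(S, K_S(D)^{\otimes m(m+1)})$. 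The third condition is an additional pointwise linear constraint on the jets of the $\varphi_k$ at $z_0$. A short dimension count on the incidence variety $\{(\Phi, z_0, b_0) : F = \partial_b F = \partial_z F = 0\} \subset \mathcal{B}_{S,D,m} \times \Tot K_S(D)$ shows its projection to $\mathcal{B}_{S,D,m}$ is of strictly smaller dimension, so for generic $\Phi$ it is empty; equivalently, generic $\Delta(\Phi)$ has only simple zeros, at which $\Sigma$ is automatically smooth.

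Second, the genus. The projection $\pi \colon \Sigma \to S$ is finite of degree $m+1$. For generic $\Phi$ the branch points are simple (only two sheets come together with ramification index $2$), so the ramification divisor $R$ on $\Sigma$ has degree equal to the degree of the discriminant divisor on $S$. Since $b$ has weight $1$ and $\varphi_k$ has weight $k$ with values in $K_S(D)^{\otimes k}$, the discriminant of a monic degree-$(m+1)$ polynomial has weight $m(m+1)$, so
\begin{equation*}
\deg R \;=\; \deg \Delta(\Phi) \;=\; m(m+1) \deg K_S(D) \;=\; m(m+1)(2g - 2 + d).
\end{equation*}

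Finally, I apply Riemann--Hurwitz:
\begin{equation*}
2 g(\Sigma) - 2 \;=\; (m+1)(2g - 2) + m(m+1)(2g - 2 + d),
\end{equation*}
which upon collecting the $(2g-2)$ terms yields $2 g(\Sigma) - 2 = (m+1)^2(2g - 2) + m(m+1)d$, i.e.\ $g(\Sigma) = (m+1)^2(g - 1) + \tfrac{m(m+1)}{2} d + 1$, as claimed. The only real obstacle is the smoothness step, and the key point there is verifying (via a standard incidence-variety argument) that the three conditions $F = \partial_b F = \partial_z F = 0$ are genuinely independent on the parameter space $\mathcal{B}_{S,D,m}$, which is what makes the generic $\Phi$ avoid the locus of singular spectral curves.
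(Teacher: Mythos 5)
Your proof is correct and follows essentially the same route as the paper: both establish the degree-$(m+1)$ covering $\Sigma \to S$ has simple branching governed by the discriminant $\Delta(\Phi) \in H^0(S, K_S(D)^{\otimes m(m+1)})$, use that $\deg \Delta = m(m+1)(2g-2+d)$ branch points occur generically, and then apply Riemann--Hurwitz. Your smoothness discussion (the incidence-variety/transversality count showing $F = \partial_b F = \partial_z F = 0$ is avoidable for generic $\Phi$) spells out a step the paper compresses into the remark that the discriminant has distinct zeros for generic $\Phi$, but it is the same underlying argument.
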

\begin{proof} Observe that $\Sigma \rightarrow S$ is a degree $m+1$ covering with simple branch points. The branch locus is controlled by the determinant of $\Phi(b)$, which is a section of $\left(K_S(D)\right)^{\otimes m(m+1)}$. For generic $\Phi$ the discriminant has distinct zeroes and their number is equal to the degree of the line bundle:
$$
\deg\left(\left(K_S(D)\right)^{\otimes m(m+1)}\right) = m(m+1)(d + 2g - 2)
$$
And so by Riemann-Hurwitz type argument:
$$
g(\Sigma) = (m+1)(g-1) + 1 - \frac 12\#\{\text{branch points}\} = (m+1)(g-1)+ 1 + \frac{m(m+1)(d+2g-2)} 2.
$$
Rearranging terms implies the statement of the lemma.
\end{proof}

\medskip
We define $Y_\Phi$ by modifying the right-hand side of equation \ref{Smitheq}. Let $\mathcal V$ be rank two vector bundle on $S$ as before. For clarity of exposition we also assume that there is a splitting $\mathcal {V} \simeq \mathcal L_1 \oplus \mathcal L_2$. Fix $\delta\in H^0(S, K_S(D))$ with zeroes precisely at points of $D$. Then we can write:
\begin{equation}\label{Goncharoveq}
  (\delta a)c = \Phi(b),
\end{equation}
where $a\in H^0(S, \mathcal L_1^{\otimes (m+1)}(-D))^\vee,\ c\in H^0(S, \mathcal L_2^{\otimes (m+1)})^\vee,\ b \in H^0(S,\mathcal L_1 \mathcal L_2)^\vee$. Both sides of this equation belong to $\left(K_S(D)\right)^{\otimes(m+1)}$ so it is well-defined.

\smallskip
By contrast with the case $m = 1$ (\ref{Smitheq}), the role of variable $b$ is now slightly different from $a$ and $c$. By definition $\det\mathcal V\simeq K_S(D)$ and $Y_\Phi$ defined by (\ref{Goncharoveq}) admits a natural map to $\Tot(K_S(D))$. Note that the image of this projection is not a subvariety. Moreover, its generic fiber is a conic, but over the roots of $\Phi(b)$ over points of $D$ it degenerates to an affine plane. Both of this unpleasant circumstances can be fixed by considering affine blow-ups of $\Tot(K_S(D))$ as we show in the next subsection.

We conclude this subsection showing that $Y_\Phi$ is smooth and the canonical class $K(Y_\Phi)$ is trivial.
\begin{proposition}\label{Ysmooth}
  $Y_\Phi$ is smooth.
\end{proposition}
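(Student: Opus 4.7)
The plan is to verify smoothness via the Jacobian criterion in local trivializations. Near an arbitrary point $p_0\in S$, pick a local coordinate $z$ with $z(p_0)=0$ and compatible local trivializations of the three summands of $\mathcal{W}$, consistent with the identification $\mathcal{L}_1^{\otimes(m+1)}(-D)\otimes\mathcal{L}_2^{\otimes(m+1)}\otimes\mathcal{O}(D)\simeq K_S(D)^{\otimes(m+1)}$ used to make sense of equation \eqref{Ydefintro}. In these coordinates $\delta$ becomes a holomorphic function $\delta(z)$ with a simple zero exactly at points of $D$, and $Y_\Phi$ is cut out locally by the single equation
\[
F(a,b,c,z)\;=\;\delta(z)\,a\,c\;-\;\Phi(b,z),\qquad \Phi(b,z)\;=\;b^{m+1}+\sum_{k=2}^{m+1}\varphi_k(z)\,b^{m+1-k},
\]
with partial derivatives
\[
\partial_a F=\delta(z)c,\quad \partial_c F=\delta(z)a,\quad \partial_b F=-\partial_b\Phi(b,z),\quad \partial_z F=\delta'(z)\,ac\,-\,\partial_z\Phi(b,z).
\]
A point of $Y_\Phi$ is singular iff it lies on the common zero locus of $F$ and all four partials.

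I would then split into two cases. First, if $p_0\notin D$, then $\delta(z)\neq 0$, and $\partial_a F=\partial_c F=0$ force $a=c=0$. The equation $F=0$ then reads $\Phi(b,z)=0$, while the vanishing of $\partial_b F$ and $\partial_z F$ becomes $\partial_b\Phi(b,z)=\partial_z\Phi(b,z)=0$. Together these three equations say precisely that $(b,z)$ is a singular point of the spectral curve $\Sigma\subset\Tot K_S(D)$ defined in \eqref{spectraldef}, which is ruled out for generic $\Phi$ by Lemma~\ref{spectralgenus}.

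Second, if $p_0\in D$, then $\delta(0)=0$ and $\delta'(0)\neq 0$, so $\partial_a F$ and $\partial_c F$ vanish automatically on the fiber over $p_0$. The conditions $F=0$ and $\partial_b F=0$ reduce to $\Phi(b,0)=0$ and $\partial_b\Phi(b,0)=0$, which together state that $b$ is a multiple root of the fiber polynomial $\Phi(\cdot,0)$. However, the values $\varphi_k(p)\in K_S(D)_p^{\otimes k}$ at the points $p\in D$ can be prescribed arbitrarily by varying $\Phi\in\mathcal{B}_{S,D,m}$ (the evaluation map is surjective), so the non-vanishing of the discriminant of the fiber polynomial at every $p\in D$ is an open dense condition that may be included in the genericity hypothesis on $\Phi$. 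Hence no singular point of $Y_\Phi$ lies over $D$ either, and $Y_\Phi$ is smooth.

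The only real obstacle is the bookkeeping needed to write the defining equation in one affine chart that covers the given point of $S$, making sure the trivializations of the three line bundles are compatible so that $F$ is genuinely a well-defined local equation; once this is done, the Jacobian calculation is routine and the genericity of $\Phi$ (smoothness of $\Sigma$ together with the discriminant condition at $D$) does all the work.
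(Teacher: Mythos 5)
Your proof is correct and follows essentially the same route as the paper's: both apply the Jacobian criterion in local trivializations, split on whether the base point lies in $D$, and invoke genericity of $\Phi$ — smoothness of the spectral curve $\Sigma$ away from $D$, and simple roots of the fiber polynomial over $D$. You simply make the partial derivatives and the identification of the putative singular locus with $\mathrm{Sing}(\Sigma)$ explicit, and justify the discriminant condition over $D$ by surjectivity of evaluation maps, where the paper states these genericity facts more tersely.
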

\begin{proof} This follows from equation (\ref{Goncharoveq}) and genericity assumption for $\Phi$. There are several cases to consider. If $\delta \neq 0$ and $ac\neq 0$ then the differential of the equation contains a term $d a$ or $d c$ with nonzero coefficient. The case when $\delta \neq 0$ and $ac = 0$ corresponds to zeroes of $\Phi(b)$, the smoothness follows because their multiplicity is at most two at isolated points corresponding to roots of the discriminant. Finally, if $\delta = 0$, then $db$ has nonzero coefficient because $\Phi(b)$ has simple zeroes over points of $D$ by genericity assumption again.
\end{proof}
\begin{proposition}\label{Ycan}
  $Y_\Phi$ has holomorphically trivial canonical line bundle.
\end{proposition}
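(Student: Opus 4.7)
The plan is to compute $K_{Y_\Phi}$ by adjunction for the smooth (by Proposition~\ref{Ysmooth}) hypersurface $Y_\Phi\subset\Tot\mathcal W$ cut out by~(\ref{Ydefintro}), and to show that the two ingredients of
$$K_{Y_\Phi}\simeq \bigl(K_{\Tot\mathcal W}\otimes \mathcal O_{\Tot\mathcal W}(Y_\Phi)\bigr)\big|_{Y_\Phi}$$
cancel exactly. More precisely, I will verify that $K_{\Tot\mathcal W}\simeq \pi^*K_S(D)^{-(m+1)}$ while $\mathcal O_{\Tot\mathcal W}(Y_\Phi)\simeq \pi^*K_S(D)^{m+1}$, where $\pi\colon\Tot\mathcal W\to S$ is the bundle projection extending the projection from $Y_\Phi$.

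For $K_{\Tot\mathcal W}$ I use the standard identity $K_{\Tot E}\simeq \pi^*\bigl(K_X\otimes(\det E)^{-1}\bigr)$, valid for any holomorphic vector bundle $E\to X$ and following from the splitting $T_{\Tot E}\simeq \pi^*T_X\oplus\pi^*E$ induced by the linear structure on fibres. Applied to $\mathcal W$, the summands of~(\ref{Wdef}) and the identification $\mathcal L_1\otimes\mathcal L_2\simeq K_S(D)$ give
$$\det\mathcal W\simeq \mathcal L_1^{m+1}(-D)\otimes\mathcal L_1\mathcal L_2\otimes\mathcal L_2^{m+1}\simeq (\mathcal L_1\mathcal L_2)^{m+2}(-D)\simeq K_S(D)^{m+1}\otimes K_S,$$
so $K_S\otimes(\det\mathcal W)^{-1}\simeq K_S(D)^{-(m+1)}$, as claimed.

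For $\mathcal O_{\Tot\mathcal W}(Y_\Phi)$ I read off the weight of~(\ref{Ydefintro}) by regarding $a,b,c$ as the tautological fibre sections of $\pi^*\mathcal L_1^{m+1}(-D)$, $\pi^*(\mathcal L_1\mathcal L_2)$, $\pi^*\mathcal L_2^{m+1}$ respectively. The factor $\delta\in H^0(S,\mathcal O(D))$ absorbs the $(-D)$-twist on the $a$-summand, so $(\delta a)c$ is a section of $\pi^*(\mathcal L_1\mathcal L_2)^{m+1}=\pi^*K_S(D)^{m+1}$; and with $\varphi_k\in H^0(S,K_S(D)^k)$ and $b^{m+1-k}\in H^0(\Tot\mathcal W,\pi^*K_S(D)^{m+1-k})$, each monomial of $\Phi(b)$ lies in the same line bundle. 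Thus~(\ref{Ydefintro}) is the vanishing of a section of $\pi^*K_S(D)^{m+1}$, giving the claimed formula for $\mathcal O_{\Tot\mathcal W}(Y_\Phi)$. Substituting into adjunction yields $K_{Y_\Phi}\simeq \mathcal O_{Y_\Phi}$, a concrete trivialising $3$-form being the Poincar\'e residue along $Y_\Phi$ of a local trivialisation of $\pi^*K_S(D)^{-(m+1)}$ inside an ambient volume form. The argument is essentially bookkeeping; the only place one can slip is in balancing the two opposite $D$-twists, namely the $(-D)$ built into~(\ref{Wdef}) and the $\delta$ in~(\ref{Ydefintro}), which are precisely what forces the cancellation.
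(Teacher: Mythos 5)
Your argument is correct and follows the same route as the paper: compute $K_{\Tot\mathcal W}$ via the standard formula $K_{\Tot E}\simeq\pi^*(K_X\otimes\det E^\vee)$, observe that the defining equation~(\ref{Ydefintro}) is a section of $\pi^*K_S(D)^{\otimes(m+1)}$, and cancel via adjunction, with the $(-D)$ twist in~(\ref{Wdef}) and the $\delta$ factor in the equation balancing each other exactly as in the paper's chain of equalities~(\ref{cantrivial}). The only addition is your remark about the Poincar\'e residue giving an explicit trivialising $3$-form, which the paper omits but which is a natural supplement.
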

\begin{proof}

Recall the following basic fact:
\begin{lemma}
  Let $Y$ be a smooth algebraic variety and $\mathcal W \overset{\pi}\rightarrow Y$ be a vector bundle. Then the canonical class of the total space of $\mathcal W$ can be expressed as:
  \begin{equation}
    K_{\Tot\mathcal W} = \pi^*(\det \mathcal W^\vee) \otimes \pi^*(K_Y)
  \end{equation}
\end{lemma}
In our construction $Y_\Phi$ appears as the zero locus of a fiberwise polynomial map of vector bundles. More specifically, let
\begin{equation}
\mathcal W\overset{\text{def}}{=} \left(\mathcal L_1\right)^{\otimes (m+1)}(-D)\oplus \mathcal L_1\mathcal L_2 \oplus\left(\mathcal L_2\right)^{\otimes (m+1)},
\end{equation}
Then, by transporting all terms in (\ref{Goncharoveq}) to left-hand side, we can view $Y_\Phi$ as the zero locus of a section $s\in H^0\left(\Sym^\bullet(\mathcal W^\vee) \otimes \pi^*\left((K_S(D))^{\otimes(m+1)}\right)\right)$. Note that the sheaf $\Sym^\bullet(\mathcal W^\vee)$ is by definition a trivial sheaf on $\Tot\mathcal W$, and hence adjunction formula gives:
\begin{equation}\label{cantrivial}
\begin{split}
  K_{Y_\Phi} =&\left( K_{\Tot \mathcal W} \otimes \pi^*\left((K_S(D))^{\otimes(m+1)}\right)\right)\big|_{Z(s)} =\\=
  &\left(\pi^*(\det\mathcal W^\vee)\otimes \pi^*(K_S)\otimes \pi^*\left((K_S(D))^{\otimes(m+1)}\right)\right)|_{Z(s)} = \\ =
  &\left(\pi^*\left(\left(K_S(D)\right)^{\otimes (-m-2)}\otimes \mathcal O(D)\right)\otimes \pi^*(K_S)\otimes \pi^*\left((K_S(D))^{\otimes(m+1)}\right)\right)|_{Z(s)} =\\=
  &\ \left(\mathcal O_{\Tot \mathcal W}\right)|_{Z(s)}
\end{split}
\end{equation}
\end{proof}

\subsection{Manifolds $Y_\Phi$ as conic fibrations.}\label{conicfibr} Consider a factorization of the projection of $Y_\Phi$ onto $S$:
\begin{equation}\label{prefactor}
  \begin{tikzcd}
    Y_\Phi \arrow{r}{\be'} \arrow[swap]{dr}{\pi} & \Tot{K_S(D)} \arrow{d}{\kappa'} \\
    & S
  \end{tikzcd}
\end{equation}
We want to analyze fibers of the map $\be'$ via the projection $\pi$. For a given point $x\in S$ we write $\Phi_x(\be)$ for the restriction of $\Phi(b)$ to the fiber $\left(K_S(D)\right)_x$. Further, we denote by $\be_x$ the fiber of $\be$ over $x$:
$$\be_x:(Y_\Phi)_x \longrightarrow \left(K_S(D)\right)_x.$$
Since outside of $D$ the section $\delta\in H^0(S, \mathcal O(D))$ is nonzero, the fibers of $\be_x$ for $x\in S\setminus D$ are conics that degenerate to a union of two lines $a=0$ and $c = 0$ over zeroes of $\Phi_x(b)$. On the other hand, if $p \in D$ then the equation (\ref{Goncharoveq}) reads $\Phi(b) = 0$ and becomes vacuous for $a$ and $c$. Hence the image of $\be_p,\ p\in D$ consists of $(m+1)$ distinct points, and the preimage of each of them is an affine plane in $\mathcal W_p$

\medskip
To realize $Y_\Phi$ as a conic fibration over a surface we replace $\Tot K_S(D)$ by its affine blow up as follows. Let $\overline{T}_\Phi$ be the surface defined by the equation:
\begin{equation}\label{blow}
\delta \lambda = \Phi(b) \mu, \ [\lambda:\mu] \in \PP\left(
\pi'^*\left(\left(K_S(D)\right)^{\otimes(m+1)}(-D)\right) \oplus \mathcal O
\right).
\end{equation}
Thus, $\overline{T}_\Phi$ is the blow-up of $\Tot K_S(D)$ at points where $\delta = \Phi(b) = 0$. These points are precisely roots of $\Phi(b)$ over divisor $D$, so there are $d(m+1)$ of them. Our desired modification of $\Tot K_S(D)$ is defined as an open subvariety $\mu \neq 0$ of the blow-up $\overline{T}_\Phi$, we denote it $T_\Phi$.

Now it remains to note that the projection $\pi: Y_\Phi \rightarrow S$ factors through $T_\Phi$ by setting: $\lambda = ac\mu$. Hence we obtain the factorization:
\begin{equation}\label{factor}
  \begin{tikzcd}
    Y_\Phi \arrow{r}{\be} \arrow[swap]{dr}{\pi} & T_\Phi \arrow{d}{\kappa} \\
    & S
  \end{tikzcd}
\end{equation}
Outside of divisor $D$ it coincides with (\ref{prefactor}). If $x\in D$ then $\delta = 0$ and equation \ref{Goncharoveq} implies $\Phi_x(b) = 0$. Hence for every choice of $[\lambda:\mu]\neq [1:0]$ we have a conic $ac\mu = \Phi(b)\lambda$ which is exactly what we needed.

\smallskip
\noindent\emph{Remark:} Condition $\Phi(b) = 0$ is defined for points $T_\Phi$ and defines a curve in $Y_\Phi$ which is isomorphic to the spectral curve \ref{spectraldef}. Abusing notation we will usually treat spectral curve~$\Sigma$ as sitting inside of $T_\Phi$. In particular, it is immediate from the construction that the locus where conics of the singular fibration $Y_\Phi\overset{\be}\longrightarrow T_\Phi$ degenerate to the intersection of lines $ac = 0$ is given precisely by the spectral curve.

\subsection{Topology of Calabi-Yau manifolds $Y_\Phi$} In this subsection we compute cohomology and homology groups of $Y_\Phi$. Of particular interest to us are $H^2(Y_\Phi, \QQ)$ and $H_3(Y_\Phi, \QQ)$. The former is related to the choice of symplectic form in the construction of the Fukaya categories, and the latter is important as the first step towards understanding configurations of Lagrangian $3$-spheres in $Y_\Phi$.

The computation proceeds in three steps:
\begin{enumerate}
  \item Direct computation of cohomology groups of $T_\Phi$ and $\overline{T}_\Phi$
  \item Application of Decomposition Theorem to compute cohomology groups of a fiberwise compactification $X_\Phi$
  \item Derivation of $H^\bullet(Y_\Phi, \QQ)$ from an exact sequence that relates these groups to cohomology of the compactification.
\end{enumerate}
An excellent exposition of Decomposition Theorem due to Beilinson, Bernstein, Deligne and Gabber is given in~\cite{CM07}.

\medskip
Computation of the topology of $\overline{T}_\Phi$ follows from its description as a blow-up of $\Tot K_S(D)$ at $(m+1)d$ points. It follows that:
\begin{equation}
  H^\bullet(\overline{T}_\Phi, \QQ) = \QQ[0]\oplus\QQ[-1]^{\oplus 2g} \oplus \QQ[-2]^{\oplus(1 + (m+1)d)}
\end{equation}
We have complementary embedding of open and closed subsets:
\begin{equation}
  T_\Phi\lhook\joinrel\overset{j}\longrightarrow \overline{T}_\Phi\overset{i}\longleftarrow \joinrel\rhook Z\equiv \underbrace{\AAA^1\sqcup \ldots \sqcup \AAA^1}_{d\text{ copies}}
\end{equation}
And the corresponding exact sequence of sheaves:
\begin{equation}
  0\longrightarrow j_!j^!\left(\underline\QQ_{\overline T_\Phi}\right)
  \longrightarrow \underline\QQ_{\overline{T}_\Phi}
  \longrightarrow i_*i^*\left(\underline\QQ_{\overline T_\Phi}\right)
  \longrightarrow 0
\end{equation}
Applying $R\Gamma_c(-)$ to this sequence results in a long exact sequence relating cohomology with compact support $H_c^\bullet\left(T_\Phi,\QQ\right)$, $H_c^\bullet\left(\overline{T}_\Phi,\QQ\right)$ and $H_c^\bullet\left(Z,\QQ\right)$. The middle term is known by Poincar\'e duality and we have:
\begin{alignat*}{2}
    & H_c^0\left(T_\Phi,\QQ\right)\to\hspace{30pt}0 & \to &\ 0  \to \\
\to & H_c^1\left(T_\Phi,\QQ\right)\to\hspace{30pt}0 & \to &\ 0  \to \\
\to & H_c^2\left(T_\Phi,\QQ\right)\to \QQ^{\oplus(1+(m+1)d)} & \to &\QQ^{\oplus d}  \to \\
\to & H_c^3\left(T_\Phi,\QQ\right)\to\hspace{28pt}\QQ^{\oplus 2g} &\to &\ 0  \to \\
\to & H_c^4\left(T_\Phi,\QQ\right)\to\hspace{28pt}\QQ & \to &\ 0 \to
\end{alignat*}
The third arrow in the second cohomology row comes from ordinary restriction of differential forms with compact support and is clearly a surjection. Applying Poincar\'e duality again we get:
\begin{equation}
  H^\bullet(T_\Phi, \QQ) = \QQ[0]\oplus\QQ[-1]^{\oplus 2g}\oplus\QQ[-2]^{\oplus(1 + md)}
\end{equation}

\medskip
The fiberwise compactification $Y_\Phi\subset X_\Phi$ can be obtained by homogenizing equation for affine conics. More specifically, recall equation \ref{blow}. For fixed $x\in S$ and $[\lambda:\mu]$ the condition $\lambda = ac\mu$ is represented by an element of $\Sym^\bullet\left(\left( \mathcal L_1\right)^{\otimes(m+1)}(-D)^\vee \oplus \left( \mathcal L_2\right)^{\otimes(m+1)\vee}\right)$; it is homogenized by adding a trivial direct summand and writing: $E^2\lambda = AC\mu$, where $a = A/E,\ c = C/e$.

Note that $\mu\neq 0$ by definition of $T_\Phi$ and so for $E = 0$ there are always two solutions $[A:C:E] = [1:0:0]$ and $[0:0:1]$. Consequently, the complement $X_\Phi\setminus Y_\Phi$ consists of two copies of $T_\Phi$.

\smallskip
Since the map $\bar\beta: X_\Phi\longrightarrow T_\Phi$ is proper the Decomposition Theorem applies:
\begin{equation}\label{decompositionthm}
  R\bar\beta_*(\underline{\QQ}_{X_\Phi}) \simeq \bigoplus_{q \geq 0}\left(R^q \bar\beta_*(\underline{\QQ}_{X_\Phi})[-q]\right)
\end{equation}
Recall spectral curve $\Sigma \subset T_\Phi$ given by $\{\Phi(b) = 0\}$. We have:
\begin{lemma}\label{directimage} Derived direct images $R^q\bar\beta_*(\underline{\QQ}_{X_\Phi})$ are:
  \begin{itemize}
    \item $R^0 \bar\beta_*(\underline{\QQ}_{X_\Phi}) \simeq \underline{\QQ}_{T_\Phi};$
    \item $R^2 \bar\beta_*(\underline{\QQ}_{X_\Phi}) \simeq \underline{\QQ}_{T_\Phi} \oplus (i_\Sigma)_*(\underline{\QQ}_\Sigma);$
    \item $R^q \bar\beta_*(\underline{\QQ}_{X_\Phi}) = 0,\ q\neq 0,2$
  \end{itemize}
\end{lemma}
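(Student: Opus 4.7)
My plan is to describe the fibers of $\bar\beta$ explicitly, compute the stalks of $R^q\bar\beta_*\underline{\QQ}_{X_\Phi}$ via proper base change, and then promote the stalk-level picture to the claimed sheaf decomposition using the Beilinson--Bernstein--Deligne--Gabber decomposition theorem.

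In the chart $\mu\neq 0$ of (\ref{blow}), the compactified equation reads $E^2\lambda = AC\mu$ with $[A:C:E]\in\PP^2$; over $t\in T_\Phi\setminus\Sigma$ one has $\lambda(t)/\mu(t)\neq 0$ and the fiber is a smooth plane conic isomorphic to $\PP^1$, while over $t\in\Sigma$ it degenerates to $AC=0$, a transverse union of two lines in $\PP^2$ meeting at $[0:0:1]$. A short check in local coordinates $(\lambda,\sigma,A,C)$ near a node (with $\sigma$ tangent to $\Sigma$ and $E=\mu=1$) shows that $X_\Phi$ itself is smooth: the local defining equation $\lambda = AC$ has everywhere nonzero differential. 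Proper base change together with the elementary computations $H^0 = \QQ$ and $H^1 = 0$ for both fiber types (the latter by Mayer--Vietoris in the nodal case), and $H^2(\PP^1) = \QQ$ versus $H^2(\PP^1\cup_{\mathrm{pt}}\PP^1) = \QQ^2$ (one class per irreducible component), then gives $R^0\bar\beta_*\underline{\QQ}_{X_\Phi}\simeq\underline{\QQ}_{T_\Phi}$, $R^1\bar\beta_*\underline{\QQ}_{X_\Phi}=0$, and the correct stalk ranks for $R^2\bar\beta_*\underline{\QQ}_{X_\Phi}$. Since the smooth fibers are simply connected there is no monodromy, so the restriction $R^2\bar\beta_*\underline{\QQ}_{X_\Phi}|_{T_\Phi\setminus\Sigma}$ is the constant sheaf $\underline{\QQ}$.

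To upgrade this stalk-level picture to the direct sum decomposition of $R^2\bar\beta_*\underline{\QQ}_{X_\Phi}$, I invoke the decomposition theorem \cite{CM07} applied to the proper map $\bar\beta$ from the smooth threefold $X_\Phi$: the object $R\bar\beta_*\underline{\QQ}_{X_\Phi}[3]$ decomposes as a direct sum of shifted intersection cohomology sheaves of closed subvarieties of $T_\Phi$. The stalk computation together with Lemma~\ref{spectralgenus} restricts the possible supports to $T_\Phi$ and to the smooth curve $\Sigma$, so the only available IC-summands are shifts of $IC(T_\Phi) = \underline{\QQ}_{T_\Phi}[2]$ and of $IC(\Sigma) = (i_\Sigma)_*\underline{\QQ}_\Sigma[1]$. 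Matching ranks in each degree on each stratum, using relative hard Lefschetz for the symmetry of shifts, forces
$$R\bar\beta_*\underline{\QQ}_{X_\Phi}[3] \simeq \underline{\QQ}_{T_\Phi}[3] \oplus \underline{\QQ}_{T_\Phi}[1] \oplus (i_\Sigma)_*\underline{\QQ}_\Sigma[1],$$
and regrouping shifts yields the claimed formulas. The main delicate point is ruling out spurious IC-summands supported at isolated points of $T_\Phi$; this is excluded by the constancy of the stalk ranks on each of the two strata computed above. An alternative, more explicit, route would be to construct the two summand inclusions geometrically from the section $T_\Phi\to X_\Phi$, $t\mapsto [1:0:0]$ (which hits each fiber once, always in the component $\{C=0\}$ over $\Sigma$) and from the relative cycle class of the divisor $\{A=0\}\subset X_\Phi$ (which intersects smooth fibers in one point and contains the component $\{A=0\}$ over $\Sigma$); the difference of the two maps vanishes off $\Sigma$ and produces the $(i_\Sigma)_*\underline{\QQ}_\Sigma$ summand.
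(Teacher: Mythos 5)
Your proof is correct. Both you and the paper rely on the decomposition theorem (the paper invokes it just before the lemma, in~(\ref{decompositionthm})) together with the same explicit fiber description: a generic $\PP^1$ degenerating over $\Sigma$ to a pair of lines meeting at a node. Beyond that, the identification of the two summands of $R^2\bar\beta_*\underline\QQ_{X_\Phi}$ proceeds a bit differently. The paper simply writes down the cospecialization map $(\alpha,\gamma)\mapsto\alpha+\gamma$ from the rank-two stalk over $\Sigma$ to the rank-one generic stalk and reads off the skyscraper summand $(i_\Sigma)_*\underline\QQ_\Sigma$ as its kernel, leaving the splitting to the decomposition theorem already quoted. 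You instead argue through the IC-sheaf structure theorem: you rule out spurious supports by the constancy of stalk ranks on the two strata, note that $T_\Phi$ and $\Sigma$ are both smooth so the candidate IC-sheaves are shifted constant sheaves, and pin down the degree shifts via relative Hard Lefschetz. Your route uses heavier machinery but is more robust and does not require producing the splitting by hand; your alternative, more geometric construction via the section $t\mapsto[1:0:0]$ and the cycle class of $\{A=0\}$ is essentially an explicit, rigorous version of the paper's kernel argument. One small point worth tightening: simply-connectedness of the fibers rules out monodromy on $R^0$ and $R^1$ but not a priori on $R^2$; for $\PP^1$-fibers the triviality of the $R^2$-local system follows instead from the canonical orientation class (degree map), which is preserved by the structure group.
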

\begin{proof} Generic fiber of $\bar\beta$ is $\PP_1$ and as noted in the end of Section~\ref{conicfibr} it degenerates to a union of two spheres meeting at a point $\{AC =0 \}\subset\PP_2$ along $\Sigma$. If $\al$ and $\gamma$ represent fundamental classes of spheres $A = 0$ and $C = 0$ respectively, then restriction to a generic fiber is given by $(\al, \gamma)\mapsto \al+\gamma$. Hence we can split off the term $(i_\Sigma)_*(\underline{\QQ}_\Sigma),$ the kernel of the restriction.
\end{proof}

Finally we compute homology and cohomology groups for $Y_\Phi$. The complement $X_\Phi \setminus Y_\Phi$ is given by the disjoint union of varieties $\{A =0\}$ and $\{C = 0\}$, each isomorphic to $T_\Phi$. Hence there is a pair of complementary embeddings:
\begin{equation}
  Y_\Phi\lhook\joinrel\overset{j}\longrightarrow X_\Phi\overset{i}\longleftarrow \joinrel\rhook T_\Phi^{\{A=0\}}\sqcup T_\Phi^{\{C=0\}}
\end{equation}
We apply the functor of sections with compact support to the exact sequence:
\begin{equation}
  0\longrightarrow j_!j^!\left(\underline\QQ_{X_\Phi}\right)
  \longrightarrow \underline\QQ_{X_\Phi}
  \longrightarrow i_*i^*\left(\underline\QQ_{X_\Phi}\right)
  \longrightarrow 0
\end{equation}

\begin{equation*}
\begin{tikzcd}[column sep=0.3cm]
\text{Degree} & & H_c^\bullet(Y_\Phi, \QQ) & & H_c^\bullet(X_\Phi, \QQ) & & H_c^\bullet(T_\Phi\sqcup T_\Phi, \QQ) & &\\
0 & & 0 & \longrightarrow & 0 & \longrightarrow& 0 & \longrightarrow &\\
1 & \longrightarrow & 0 & \longrightarrow & 0 & \longrightarrow & 0 & \longrightarrow &\\
2 & \longrightarrow & H_c^2(Y_\Phi, \QQ) & \longrightarrow & \QQ\oplus\QQ^{\oplus (1+md)}\oplus 0 & \overset{\circled{1}}\longrightarrow & \QQ^{\oplus 2(1+md)} & \longrightarrow &\\
3 & \longrightarrow & H_c^3(Y_\Phi, \QQ) & \longrightarrow & \QQ^{\oplus 2g(\Sigma)}\oplus\QQ^{\oplus 2g}\oplus0 & \overset{\circled{2}}\longrightarrow & \QQ^{\oplus2(2g)} & \longrightarrow &\\
4 & \longrightarrow & H_c^4(Y_\Phi, \QQ) & \longrightarrow & \QQ\oplus\QQ\oplus\QQ^{\oplus(1+md)} & \overset{\circled{3}}\longrightarrow & \QQ^{\oplus 2} & \longrightarrow &\\
5 & \longrightarrow & H_c^5(Y_\Phi, \QQ) & \longrightarrow & 0\oplus0\oplus\QQ^{\oplus 2g} & \longrightarrow & 0 & \longrightarrow &\\
6 & \longrightarrow & \QQ & \longrightarrow & 0\oplus0\oplus\QQ & \longrightarrow & 0 & &
\end{tikzcd}
\end{equation*}

Note that direct sums in the middle column correspond to contributions of the three terms from Lemma~\ref{directimage}.

\begin{proposition}\label{topology}
\begin{equation}
  H_c^\bullet(Y_\Phi, \QQ) = \QQ[-3]^{\oplus m(m+2)(2g-2+d)}\oplus \QQ[-4]^{\oplus (md+1)}\oplus\QQ[-5]^{\oplus 2g}\oplus\QQ[-6]
\end{equation}
\begin{equation}
H^\bullet(Y_\Phi, \QQ) =\QQ[0]\oplus \QQ[-1]^{\oplus 2g}\oplus\QQ[-2]^{\oplus (md+1)}\QQ[-3]^{\oplus m(m+2)(2g-2+d)}
\end{equation}
\end{proposition}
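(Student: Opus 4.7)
The plan is to chase the long exact sequence displayed at the end of the excerpt by determining the three restriction maps $\circled{1}, \circled{2}, \circled{3}$, and then pass to ordinary cohomology via Poincar\'e duality on the smooth oriented real $6$-manifold $Y_\Phi$. Write $\sigma_A, \sigma_C \subset X_\Phi$ for the two disjoint sections $[A:C:E] = [0:1:0]$ and $[1:0:0]$ of $\bar\beta$, each identified with $T_\Phi$. I interpret the three Leray summands of $H^i_c(X_\Phi)$ furnished by Lemma~\ref{directimage} as follows. The $R^0$-summand $H^i_c(T_\Phi)$ consists of classes $\bar\beta^*$-pulled back from the base; since $\bar\beta\circ\sigma_{A/C} = \operatorname{id}_{T_\Phi}$, such classes restrict as the identity to each section and contribute the diagonal map $\alpha \mapsto (\alpha,\alpha)$ to each $\circled{j}$. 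The vanishing-cycle summand $(i_\Sigma)_*\underline\QQ_\Sigma \subset R^2\bar\beta_*\underline\QQ_X$ is represented geometrically by the difference cycle $[\tilde\Sigma_A - \tilde\Sigma_C]$ (together with its pairings against classes on $\Sigma$), where $\tilde\Sigma_{A/C} \subset X_\Phi$ are the two halves of $\bar\beta^{-1}(\Sigma)$; a direct transversality calculation in coordinates $(A,E,x,y)$ with $X_\Phi = \{A = E^2 x\}$ shows that $\sigma_A$ meets $\tilde\Sigma_A$ along $\Sigma$ with multiplicity one and is disjoint from $\tilde\Sigma_C$, and symmetrically for $\sigma_C$. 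Hence this summand contributes the anti-diagonal $\gamma \mapsto \bigl((i_\Sigma)_*\gamma,\, -(i_\Sigma)_*\gamma\bigr)$ via the Gysin push-forward $(i_\Sigma)_* : H^k(\Sigma) \to H^{k+2}_c(T_\Phi)$; the third, generic-fiber summand $H^{i-2}_c(T_\Phi)$ contributes further classes that will play no role in the rank count below.

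The rank computation then hinges on surjectivity of $(i_\Sigma)_*$ for $k=0,1,2$. Using $H^k(T_\Phi) \cong H^k(S)$ (via $\kappa : T_\Phi \to S$, consistent with the computation of $H^\bullet(T_\Phi)$ earlier in the excerpt in these degrees), the pullback $i_\Sigma^*: H^k(T_\Phi) \to H^k(\Sigma)$ is identified with $\pi^*$ for the ramified covering $\pi = \kappa\circ i_\Sigma : \Sigma \to S$ of degree $m+1$, which is injective thanks to $\pi_*\pi^* = (m+1)\operatorname{id}$. Poincar\'e duality on both $\Sigma$ and $T_\Phi$ then gives the desired surjectivity of $(i_\Sigma)_*$. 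The diagonal and anti-diagonal images are complementary in $H^i_c(T_\Phi)^{\oplus 2}$ (any $(x,y)$ splits uniquely as $\bigl(\tfrac{x+y}{2},\tfrac{x+y}{2}\bigr) + \bigl(\tfrac{x-y}{2},-\tfrac{x-y}{2}\bigr)$), so $\circled{1}$ is injective while $\circled{2}$ and $\circled{3}$ are surjective. Unwinding the exact sequence produces $H^2_c(Y_\Phi) = 0$; $\operatorname{rk}H^3_c(Y_\Phi) = \operatorname{rk}\operatorname{coker}\circled{1} + \operatorname{rk}\ker\circled{2} = md + (2g(\Sigma) - 2g)$, which simplifies to $m(m+2)(2g-2+d)$ after substituting the genus formula of Lemma~\ref{spectralgenus}; $\operatorname{rk}H^4_c(Y_\Phi) = \operatorname{rk}\ker\circled{3} = md + 1$; $\operatorname{rk}H^5_c(Y_\Phi) = \operatorname{rk}H^5_c(X_\Phi) = 2g$; and $H^6_c(Y_\Phi) \cong H^6_c(X_\Phi) = \QQ$. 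Poincar\'e duality $H^i(Y_\Phi,\QQ) \simeq H^{6-i}_c(Y_\Phi,\QQ)^\vee$ then yields the second formula.

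The main technical obstacle is that the splitting of $R\bar\beta_*\underline\QQ_X$ coming from the Decomposition Theorem is non-canonical, so identifying the Leray summands with specific geometric cycles requires some care. I plan to bypass this by working with intersection numbers of the cycles $\tilde\Sigma_{A/C}$ against the sections $\sigma_{A/C}$ directly in local coordinates near the singular fibers, rather than through a purely sheaf-theoretic argument, which would have to track a non-trivial contribution from $\operatorname{Ext}^2((i_\Sigma)_*\underline\QQ_\Sigma, \underline\QQ_{T_\Phi}) = H^0(\Sigma) = \QQ$ and carefully disentangle the ambiguity coming from the choice of splitting.
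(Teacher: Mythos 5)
Your approach mirrors the paper's exactly: the same long exact sequence of the pair $(X_\Phi, Y_\Phi)$ with complement $T_\Phi \sqcup T_\Phi$, the same use of Lemma~\ref{directimage} to split $H^\bullet_c(X_\Phi)$ into a diagonal contribution from the constant summand and an anti-diagonal contribution from the $(i_\Sigma)_*\underline\QQ_\Sigma$ summand, and the same final rank bookkeeping. Your geometric justification via intersection numbers of $\tilde\Sigma_{A/C}$ with the sections $\sigma_{A/C}$ is a nice concretization of what the paper asserts more tersely in the proof of Lemma~\ref{directimage}.

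However, there is one incorrect step. You write that the rank computation ``hinges on surjectivity of $(i_\Sigma)_*$ for $k=0,1,2$'' and justify it using ``$H^k(T_\Phi)\cong H^k(S)$\,... in these degrees.'' The identification $\kappa^*: H^k(S)\xrightarrow{\sim} H^k(T_\Phi)$ fails in degree $k=2$: the paper computes $H^2(T_\Phi,\QQ)\cong\QQ^{\oplus(1+md)}$, which has rank $>1$ for $md>0$. Consequently the Gysin map $(i_\Sigma)_*: H^0(\Sigma)\to H^2_c(T_\Phi)$, dual to $i_\Sigma^*: H^2(T_\Phi)\to H^2(\Sigma)$, is \emph{not} surjective in general --- it cannot be, as a map from a rank-$1$ space to a rank-$(1+md)$ space. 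Fortunately, what arrow $\circled{1}$ actually requires is only \emph{injectivity} of this degree-$0$ Gysin map (equivalently, surjectivity of $i_\Sigma^*$ on $H^2$), which holds because $[\Sigma]\neq 0$ in $H_2(T_\Phi,\QQ)$. Your ``complementary diagonal and anti-diagonal'' argument still delivers injectivity of $\circled{1}$ once this is noted, and surjectivity of $\circled{2}$ and $\circled{3}$ requires surjectivity of $(i_\Sigma)_*$ only in degrees $1$ and $2$, where your $\pi_*\pi^*=(m+1)\operatorname{id}$ argument is valid since $H^0(T_\Phi)\cong H^0(S)$ and $H^1(T_\Phi)\cong H^1(S)$ do hold. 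So the conclusion is right, but the stated justification for the degree-$0$ case needs to be replaced by the injectivity observation.
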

It is clear that the second equality is a consequence of the first by Poincar\'e duality. The proposition follows from elementary rank computations using the following
\begin{lemma}
  \begin{enumerate}[label = (\alph*)]
    \item Arrow \circled{$1$} is an injection;
    \item Arrow \circled{$2$} is a surjection;
    \item Arrow \circled{$3$} is a surjection.
  \end{enumerate}
\end{lemma}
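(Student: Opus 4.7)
The plan is to decompose the restriction map $i^*:H_c^k(X_\Phi)\to H_c^k(T_\Phi\sqcup T_\Phi)$ via the three-summand splitting from Lemma~\ref{directimage}, identify the restriction of each summand geometrically, and then verify the required ranks directly in each of the three degrees $k=2,3,4$.

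The complement $X_\Phi\setminus Y_\Phi$ is the union of two divisors $\Sigma_A=\{A=E=0\}$ and $\Sigma_C=\{C=E=0\}$, each of which maps isomorphically to $T_\Phi$ under $\bar\beta$ and whose scheme-theoretic intersection is precisely the spectral curve $\Sigma$ (viewed via either section). The pullback summand $H_c^k(T_\Phi)\hookrightarrow H_c^k(X_\Phi)$ therefore restricts diagonally, $\omega\mapsto(\omega,\omega)$. For the vanishing-cycle summand $H_c^{k-2}(\Sigma)$, I would pin down the inclusion of sheaves $(i_\Sigma)_*\underline{\QQ}_\Sigma\hookrightarrow R^2\bar\beta_*\underline{\QQ}$ on stalks over $\Sigma$, where the subsheaf is the line $\QQ\cdot([F_A]-[F_C])$ inside the singular-fiber cohomology $\QQ[F_A]\oplus\QQ[F_C]$. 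Since $\Sigma_A$ meets each $F_A^p$ transversally at one point and is disjoint from $F_C^p$ (and conversely for $\Sigma_C$), the restriction of this third summand becomes explicit: at $k=2$ the generator of $H^0(\Sigma)$ maps to $([\Sigma],-[\Sigma])\in H_c^2(T_\Phi)^{\oplus 2}$; at $k=3$ a class $\alpha\in H^1(\Sigma)$ maps to $((i_\Sigma)_*\alpha,-(i_\Sigma)_*\alpha)$ via the Gysin pushforward along $\Sigma\hookrightarrow T_\Phi$; and at $k=4$ the top class of $\Sigma$ maps to $(1,0)$, since it is represented by a single vanishing sphere $F_A^p$ meeting $\Sigma_A$ once and $\Sigma_C$ not at all.

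With these identifications in hand, each part of the lemma reduces to a short rank count. For (a), the image of arrow $1$ contains the rank-$(1+md)$ diagonal together with the antidiagonal vector $([\Sigma],-[\Sigma])$, so injectivity reduces to showing $[\Sigma]\neq 0$ in $H_c^2(T_\Phi)\cong H_2(T_\Phi)$, which holds because $\Sigma$ is a nonempty compact algebraic curve in $T_\Phi$ whose composite projection to $S$ has nonzero degree $m+1$. For (b), surjectivity of arrow $2$ is equivalent to surjectivity of $(i_\Sigma)_*:H_1(\Sigma)\to H_1(T_\Phi)$; a short Gysin long exact sequence for the presentation of $T_\Phi$ as the complement of $d$ affine lines in a blow-up of $\Tot K_S(D)$ gives $H_1(T_\Phi)\cong H_1(S)$, under which $(i_\Sigma)_*$ becomes $(\pi_\Sigma)_*:H_1(\Sigma)\to H_1(S)$, and this is surjective by the usual transfer argument for the finite branched cover $\pi_\Sigma$ (the pullback $\pi_\Sigma^*$ is rationally injective since $(\pi_\Sigma)_*\pi_\Sigma^*=\deg(\pi_\Sigma)\cdot\mathrm{id}$, so its transpose is surjective). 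For (c), the image of arrow $3$ contains $(1,1)$ from summand I and $(1,0)$ from summand III, which already span $\QQ^2=H_c^4(T_\Phi)^{\oplus 2}$.

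The main technical obstacle will be carefully justifying the geometric identification of the BBD summands, in particular the signs of the antidiagonal contributions at $k=2$ and $k=3$. I would handle this by pinning down the subsheaf inclusion $(i_\Sigma)_*\underline{\QQ}_\Sigma\hookrightarrow R^2\bar\beta_*\underline{\QQ}$ stalkwise as above, and then using proper base change $R\Gamma_c(X_\Phi,-)=R\Gamma_c(T_\Phi,R\bar\beta_*-)$ together with the identification $R\Gamma_c(T_\Phi,(i_\Sigma)_*\underline{\QQ}_\Sigma)=R\Gamma(\Sigma,\underline{\QQ})$ already used in the computation of the middle column to transport the class computations cleanly between $X_\Phi$ and $T_\Phi$.
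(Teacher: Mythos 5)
Your proof follows essentially the same route as the paper's: both decompose $i^*$ along the BBD splitting of Lemma~\ref{directimage}, note that the constant-sheaf summand $\underline\QQ_{T_\Phi}[0]$ restricts diagonally, identify the off-diagonal contribution of the vanishing-cycle summand $(i_\Sigma)_*\underline\QQ_\Sigma[-2]$ via the Gysin pushforward along $\Sigma\hookrightarrow T_\Phi$, and reduce part (b) to surjectivity of the spectral cover $\Sigma\to S$ on $H_1$. You supply details the paper leaves implicit: the stalkwise identification $\QQ\cdot\bigl([F_A]-[F_C]\bigr)$ of the subsheaf, the intersection of the nodal components with the two sections, the isomorphism $H_1(T_\Phi)\simeq H_1(S)$, and the transfer argument for the branched cover.

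Two small slips worth correcting, neither fatal. First, the sections $\Sigma_A=\{A=E=0\}$ and $\Sigma_C=\{C=E=0\}$ sit at $[0:1:0]$ and $[1:0:0]$ in every $\PP^2$-fiber, so they are disjoint in $X_\Phi$; their scheme-theoretic intersection is empty, not $\Sigma$ (this claim is unused). Second, at $k=4$ the representative $[F_A^p]-[F_C^p]$ of the BBD summand, consistent with your subsheaf description and your $k=2,3$ computations, has image $(1,-1)$ under $i^*$ rather than $(1,0)$, since $(F_A-F_C)\cdot\Sigma_A=1$ and $(F_A-F_C)\cdot\Sigma_C=-1$. Your $(1,0)$ is the image of the single component $[F_A^p]$, which is a different lift of the quotient class, differing from the symmetric one by a contribution from the other summands. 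The conclusion of part (c) is unaffected either way, since $(1,1)$ together with either $(1,-1)$ or $(1,0)$ spans $\QQ^{\oplus 2}$.
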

\begin{proof} The third column in the long exact sequence corresponds to cohomology of two copies of $T_\Phi$. The composition of the projection $\bar\beta:X_\Phi\rightarrow T_\Phi$ with the embedding of each copy $T_\Phi^{\{A = 0\}}$ or $T_\Phi^{\{C = 0\}}$ is the identity. Hence the diagonal lies in the image of restriction map
$$\underline\QQ_{X_\Phi}
\longrightarrow i_*\left(\underline{\QQ}_{T_\Phi\sqcup T_\Phi}\right)$$
More precisely, the map between last two columns of the long exact sequence can be obtained by applying $R\Gamma_c(-)$ to the natural morphism
$$R\bar\beta_*\left(\underline{\QQ}_{X_\Phi}\longrightarrow i^*\underline{\QQ}_{T_\Phi\sqcup T_\Phi}\right)$$
By Lemma~\ref{directimage} the source of the morphism consists of three direct summands and its restriction to $\underline{\QQ}_{T_\Phi}[0]$ is the diagonal embedding.

To prove parts (a) and (c) in it suffices to show that the image of the extra copy of~$\QQ$ coming from $(i_\Sigma)_*(\underline\QQ_{\Sigma})$ lands off diagonal. This is clear from the description of the direct image given in the proof of Lemma~\ref{directimage}. It says that the image of the group $H_c^{\bullet - 2}(\Sigma, \QQ)$ in $H_c^\bullet(T_\Phi\sqcup T_\Phi,\QQ)$ belongs to anti-diagonal.

For part (b) we observe that there is a chain of isomorphisms $$H^3_c(T_\Phi, \QQ) \simeq H_1(T_\Phi, \QQ)\simeq H_1(S,\QQ),$$
where the second identification is induced by the projection $T_\Phi \overset{\kappa}\rightarrow S$. The image of the map~\circled{$2$} restricted to the summand $\QQ^{\oplus 2g(\Sigma)}$ belongs to anti-diagonal and it can be described by the action of the spectral cover map $\Sigma\longrightarrow S$ on the first homology groups of curves. The latter is known to be surjective and the statement follows.
\end{proof}

\smallskip
We compute the third cohomology groups in Proposition~\ref{topology}, the rest is similar:
\begin{equation}
\begin{split}
\operatorname{rk}\left(H^3_c(Y_\Phi,\QQ)\right) = - \underbrace{(1 + 1 + md)}_{H^2_c(X_\Phi)} + \underbrace{2(1+md)}_{H^2_c(T_\Phi\sqcup T_\Phi)}
+ \underbrace{(2g(\Sigma) + 2g)}_{H^3_c(X_\Phi)} -& \underbrace{2(2g)}_{H^3_c(T_\Phi\sqcup T_\Phi)} = \\ =
2g(\Sigma) + md - 2g = (m+1)^2(2g-2) + m(m+1)d+2 +&md - 2g = \\ &=  m(m+2)(2g-2+d)
\end{split}
\end{equation}
where in the third equality uses Lemma~\ref{spectralgenus}.

\subsection{Lagrangian $3$-spheres in $Y_\Phi$}\label{spheres} The rank of the third homology groups $H_3(Y_\Phi,\QQ) \simeq H^3_c(Y_\Phi,\QQ)$ coincides with the number of vertices of quivers $Q_{\mathcal T, m}$ (see Section~\ref{clusterstr}). In this subsection we propose a way to produce \emph{topological} $3$-spheres in $Y_\Phi$ that coincides with the description of Smith \cite{S13} in the case $m=1$. The symplectic part of the story as well as many other details is subject to future research. This section has expository role and contains several informal claims.

In \cite{S13} every vertex gives rise to a special Lagrangian sphere $S^3\subset Y_\Phi$. We refer the reader to Smith's paper for details. The construction uses saddle-connections of the quadratic differential $\varphi \in H^0(S, K_S(D)^{\otimes 2})$. Every saddle-connection gives rise to a special Lagrangian sphere as outlined below. On the other hand, saddle-connections can be matched with vertices of quiver $Q_{\mathcal T, 1}$ by explicit geometric considerations for $\varphi$.

Roughly speaking, Smith's description of Lagrangian spheres is as follows. Recall that away from zeroes and poles of $\varphi$ a fiber of the projection $Y_\varphi\rightarrow S$ is a smooth quadric. Over the zeroes of the quadratic differential the quadric degenerates to an affine cone. A saddle-connection $\gamma(t)$ of $\varphi$ is a path on $S$ connecting two zeroes with constant $\varphi$-phase:
$$\operatorname{Im} \left(e^{-i\theta}\frac{\partial}{\partial t}\int_{\gamma_{[0,t]}}\sqrt\varphi\right) = 0$$
Thus, over interior points of a saddle-connection the fiber is a smooth quadric isomorphic to the cotangent bundle $T^*S^2$, at endpoints it degenerates to the cone. Special Lagrangian sphere associated to a saddle connection is formed by the zero section of the cotangent bundle that shrinks to a dot over endpoints.

\medskip
Recall the map $\beta:Y_\Phi\rightarrow T_\Phi$. Description of $Y_\Phi$ as a conic fibration gives an alternative view on Lagrangian spheres. Assume that there is an embedding of a closed topological $2$-disk $f: B^2 \rightarrow T_\Phi$ such that $f(\partial B^2)\subset \Sigma$ and no points in the interior are mapped to $\Sigma$. Then the preimage $F = \beta^{-1}\circ f(B^2)$ consists of affine conics
$$\{ac = \epsilon\}\simeq \CC^\times $$
over points of $f(B^2\setminus \partial B^2)$. Over the image of the boundary of $B^2$ these conics degenerate to crossings $ac = 0$. Then there is a $3$-sphere $L\subset F$ such that for any $x\in f(D\setminus \partial D) $ the intersection $L\cap (Y_\Phi)_x$ is a circle $S^1$ generating $H_1((Y_\Phi)_x)$; this circle shrinks to a point over the boundary $f(\partial D)\subset T_\Phi$. In particular, this means that any loop $\gamma\subset\Sigma$ that is contractible in $T_\Phi$ gives rise to a $3$-sphere $L$ described above.
\medskip

To establish a link between quivers $Q_{\TT,m}$ from Section~\ref{clusterstr} and collections of Lagrangian spheres as in~\cite{S13} one has to incorporate the intersection form on $H^3(Y_\Phi, \QQ)$. It is not difficult to show that for topological $3$-spheres described above the intersection numbers can be identified with the intersection form on $H_1(\Sigma,\QQ)$. Thus, from the topological perspective one has to find the collection of disk embeddings to $T_\Phi$ such that their boundaries are mapped to $\Sigma$ and the intersection pattern of the corresponding collection of loops is described by $Q_{\TT,m}$.

We note that in Smith's paper a \emph{canonical} collection of Lagrangian spheres is associated to every generic quadratic differential. For any $\phi_2$ intersection numbers correspond to a quiver $Q_{\TT,1}$ where the triangulation is explicitly constructed from the horizontal trajectories of quadratic differential. For $m>1$ this picture is related to the geometry of higher order differentials and might be considerably more complicated. It has been partially studied in the series of papers by Gaiotto, Moore and Neitzke~\cite{GMN08}, \cite{GMN12},\cite{GMN12p2}. In particular, for $m>1$ it may no longer be true that the collection of Lagrangian spheres associated to a polydifferential corresponds to some triangulation. However, we emphasize that collections of topological $3$-spheres still exist and define subcategories of Fukaya categories in a more flexible way.

\smallskip
In~\cite{G16} Goncharov defines ``topological spectral cover'' $\mathbf{\pi}: \mathbf{\Sigma}\rightarrow \SSS$ that can be associated to a triangulation (or more generally to an object called ``ideal web''). Conjecturally, in certain cases this map encodes topology of actual spectral curve $\Sigma$ with its natural map to $S$ arising from $\Phi\in\mathcal{B}_{S,D,m}$. Conditional on that relationship, one can exhibit a collection of loops on $\Sigma$ that lie in the kernel of associated map $H_1(\Sigma,\QQ) \longrightarrow H_1(S,\QQ)$ and with intersection pattern given by $Q_{\TT,m}$. Indeed, in Goncharov's construction the surface $\SSS$ is tiled by disks that he calls faces and their boundaries can be lifted to non-trivial cycles in $\mathbf{\Sigma}$. The claim about the quiver is immediate from details of the construction of topological spectral cover that describes $\mathbf{\Sigma}\longrightarrow \SSS$ as a result of specific gluing procedure (see loc. cit.). Hence, the associated collection of $3$-spheres in $Y_\Phi$ will have desired properties from Conjecture~\ref{conject}.

\newpage

\begin{center}
\begin{tabular}{ ||c|c|c||}
\hline
\multicolumn{2}{||m{0.66\textwidth}|}{\centering\vspace{5pt}\textbf{Triangle Terms}}&\multicolumn{1}{m{0.33\textwidth}||}{\centering\vspace{5pt}\textbf{Edge Terms}} \\[1.2ex]
\hline
TI.\includegraphics[width=.28\textwidth]{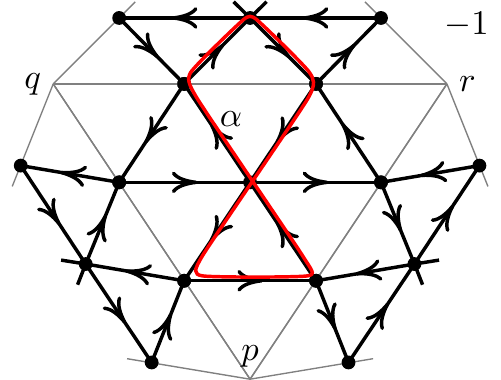}&
TII.\includegraphics[width=.28\textwidth]{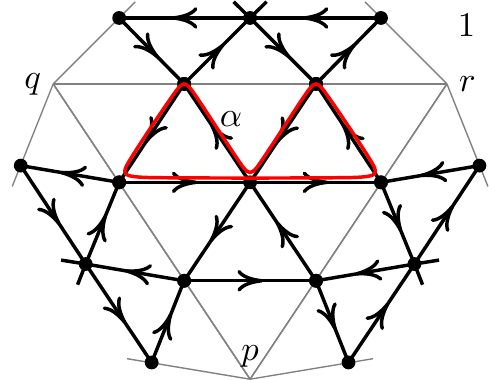}&
E. \includegraphics[width=.28\textwidth]{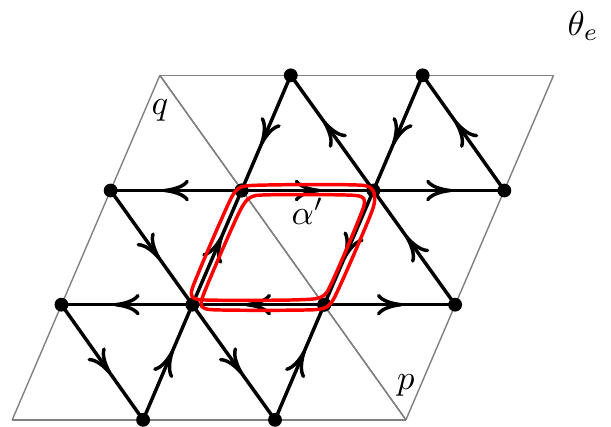}\\
\hline
\multicolumn{1}{||m{0.3\textwidth}|}
{\centering\vspace{4pt}$\overleftarrow{llfrrr}(\al)$}&
\multicolumn{1}{m{0.3\textwidth}|}
{\centering\vspace{4pt}$\overleftarrow{llfll}(\al)$}&
\multicolumn{1}{m{0.3\textwidth}||}
{\centering\vspace{4pt}$\overleftarrow{rrrrrrr}(\al')$}\\[2ex]
\hline
\end{tabular}
\captionof{table}{Table of Triangle and Edge Terms}
\label{table1}
\end{center}

\begin{center}
\begin{tabular}{ ||c|c|c||}
\hline
\multicolumn{3}{||m{\textwidth}||}{\centering\vspace{5pt}\textbf{Vertex Terms, Part 1}} \\[1.2ex]
\hline
I.\includegraphics[width=.28\textwidth]{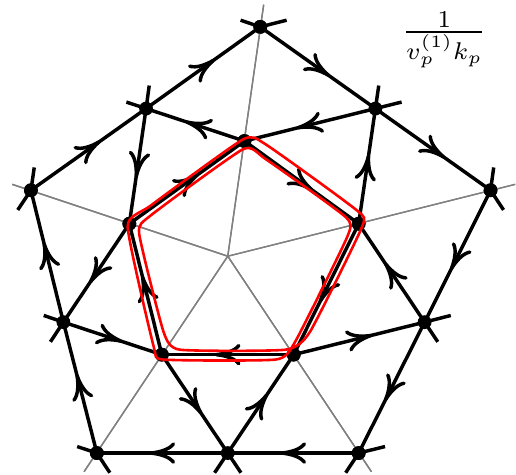}&
II.\includegraphics[width=.28\textwidth]{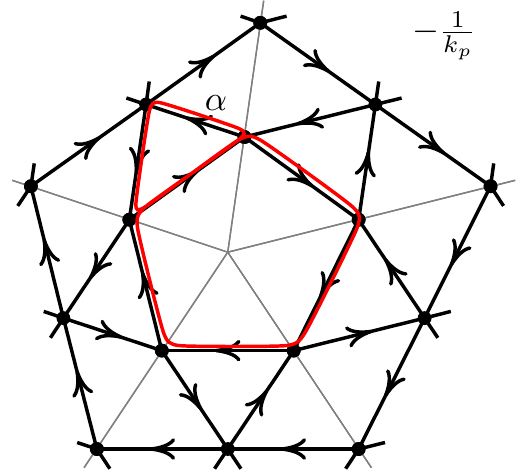}&
III.\includegraphics[width=.28\textwidth]{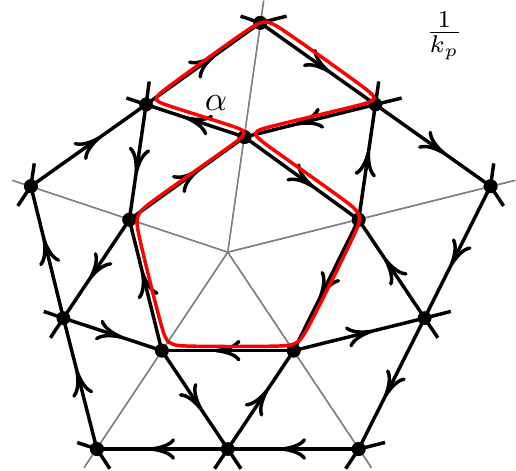}\\
\hline
\multicolumn{1}{||m{0.3\textwidth}|}
{\centering\vspace{4pt}$\left(L^{(1)}_p\right)^2$}&
\multicolumn{1}{m{0.3\textwidth}|}
{\centering\vspace{4pt}$L^{(1)}_p \overleftarrow{ll}(\al)$}&
\multicolumn{1}{m{0.3\textwidth}||}
{\centering\vspace{4pt}$L^{(1)}_p \overleftarrow{rrr}(\al)$}\\[2ex]
\hline
IV.\includegraphics[width=.28\textwidth]{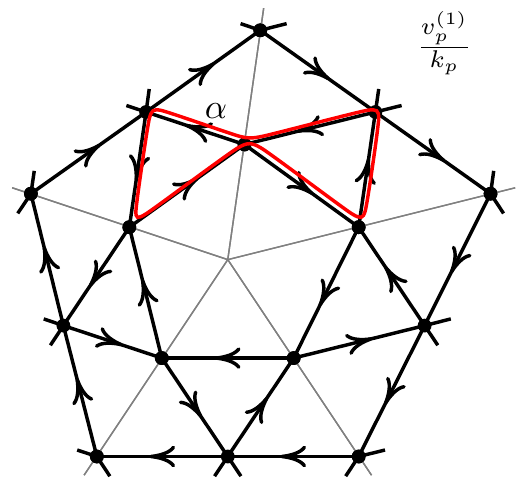}&
V.\includegraphics[width=.28\textwidth]{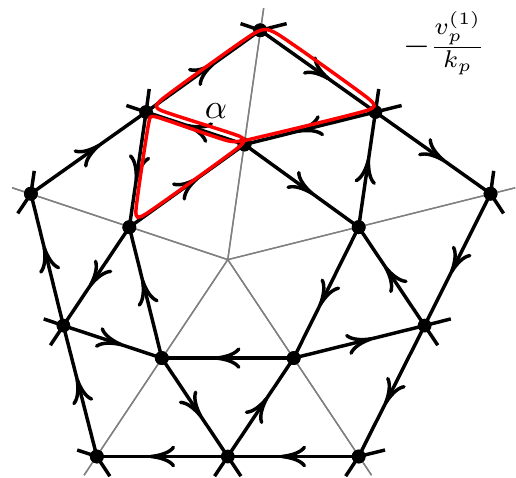}&
VI.\includegraphics[width=.28\textwidth]{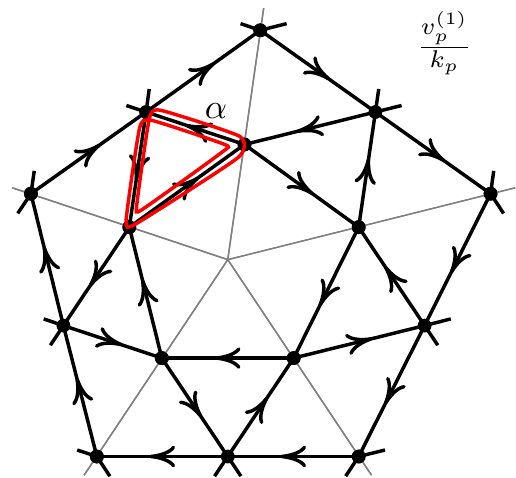}\\
\hline
\multicolumn{1}{||m{0.3\textwidth}|}
{\centering\vspace{4pt}$\overleftarrow{llrll}(\al)$}&
\multicolumn{1}{m{0.3\textwidth}|}
{\centering\vspace{4pt}$\overleftarrow{rrrlll}(\al)$}&
\multicolumn{1}{m{0.3\textwidth}||}
{\centering\vspace{4pt}$\overleftarrow{lllll}(\al)$}\\[2ex]
\hline
\end{tabular}
\captionof{table}{Table of Vertex Terms Part 1}
\label{table2}
\end{center}

\begin{center}
\begin{tabular}{ ||c|c|c||}
\hline
\multicolumn{3}{||m{\textwidth}||}{\centering\vspace{5pt}\textbf{Vertex Terms, Part 2}} \\[1.2ex]
\hline
VII.\includegraphics[width=.28\textwidth]{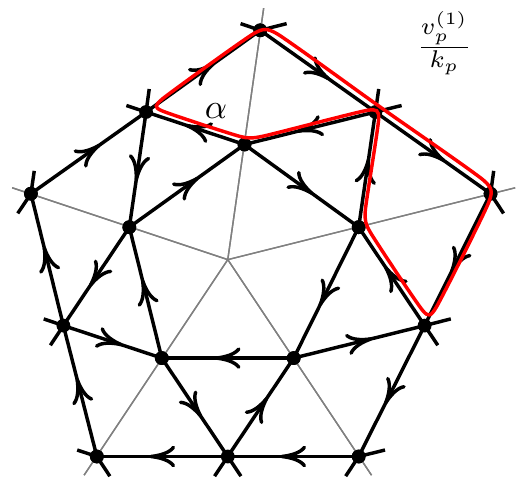}&
VIII.\includegraphics[width=.28\textwidth]{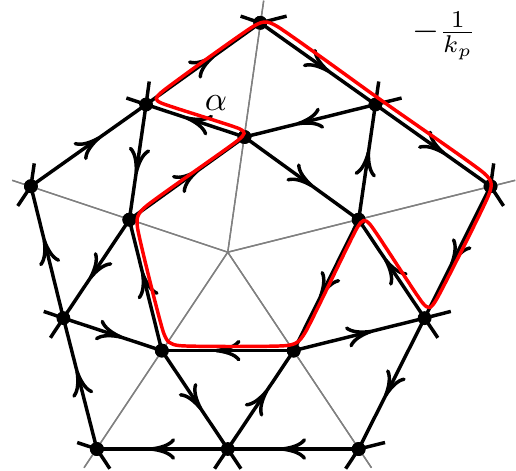}&
IX.*\includegraphics[width=.28\textwidth]{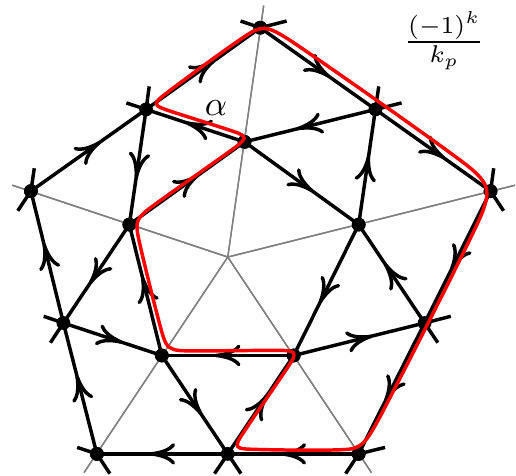}\\
\hline
\multicolumn{1}{||m{0.3\textwidth}|}
{\centering\vspace{4pt}$\overleftarrow{lrrrfrr}(\al)$}&
\multicolumn{1}{m{0.3\textwidth}|}
{\centering\vspace{4pt}$\overleftarrow{\underbrace{r...r}_{\val p - 2}lrfffr}(\al)$}&
\multicolumn{1}{m{0.3\textwidth}||}
{\centering\vspace{4pt}$\overleftarrow{\underbrace{r...r}_{\val p -k -1}lr\underbrace{f...f}_{2k+1}g}(\al)$}\\[2ex]
\hline
X.\includegraphics[width=.28\textwidth]{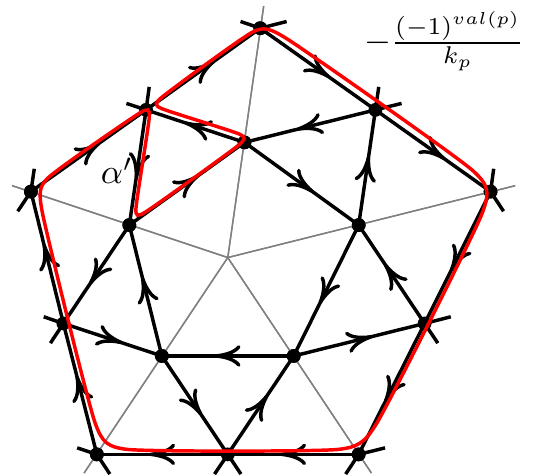}&
XI.\includegraphics[width=.28\textwidth]{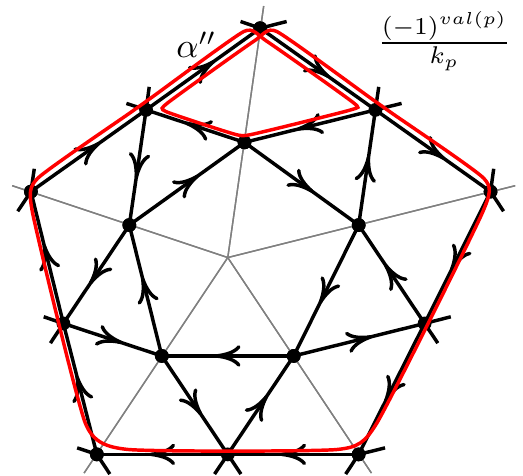}&
XII.\includegraphics[width=.28\textwidth]{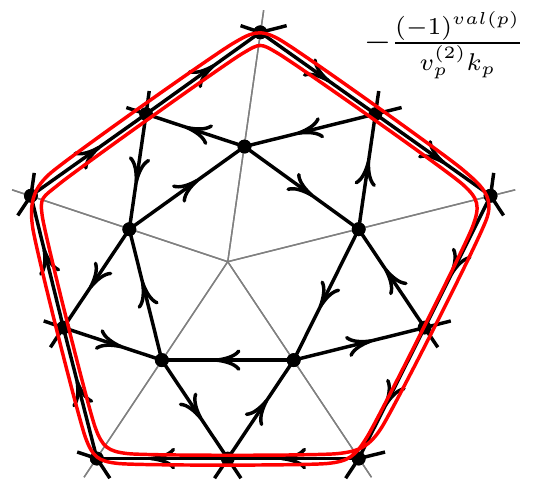}\\
\hline
\multicolumn{1}{||m{0.3\textwidth}|}
{\centering\vspace{4pt}$\overleftarrow{ll}(\al')L^{(2)}_p$}&
\multicolumn{1}{m{0.3\textwidth}|}
{\centering\vspace{4pt}$\overleftarrow{rrr}(\al'')L^{(2)}_p$}&
\multicolumn{1}{m{0.3\textwidth}||}
{\centering\vspace{4pt}$\left(L^{(2)}_p\right)^2$}\\[2ex]
\hline
\end{tabular}
\captionof{table}{Table of Vertex Terms Part 2}
\label{table3}
\end{center}

\bibliographystyle{alphaurl}
\bibliography{references}
\end{document}